\chardef\@x10\chardef\@xv60
\def\tcitime{
\def\@time{%
  \@minute\time\@hour\@minute\divide\@hour\@xv
  \ifnum\@hour<\@x 0\fi\the\@hour:%
  \multiply\@hour\@xv\advance\@minute-\@hour
  \ifnum\@minute<\@x 0\fi\the\@minute
  }}%
\def\QCTOpt[#1]#2{%
  \def\QCTOptB{#1}
  \def\QCTOptA{#2}
}
\def\QCTNOpt#1{%
  \def\QCTOptA{#1}
  \let\QCTOptB\empty
}
\def\Qct{%
  \@ifnextchar[{%
    \QCTOpt}{\QCTNOpt}
}
\def\QCBOpt[#1]#2{%
  \def\QCBOptB{#1}
  \def\QCBOptA{#2}
}
\def\QCBNOpt#1{%
  \def\QCBOptA{#1}
  \let\QCBOptB\empty
}
\def\Qcb{%
  \@ifnextchar[{%
    \QCBOpt}{\QCBNOpt}
}
\def\PrepCapArgs{%
  \ifx\QCBOptA\empty
    \ifx\QCTOptA\empty
      {}%
    \else
      \ifx\QCTOptB\empty
        {\QCTOptA}%
      \else
        [\QCTOptB]{\QCTOptA}%
      \fi
    \fi
  \else
    \ifx\QCBOptA\empty
      {}%
    \else
      \ifx\QCBOptB\empty
        {\QCBOptA}%
      \else
        [\QCBOptB]{\QCBOptA}%
      \fi
    \fi
  \fi
}
\def\GRAPHICSPS#1{%
 \ifcase\GRAPHICSTYPE
   \special{ps: #1}%
 \or
   \special{language "PS", include "#1"}%
 \fi
}%
\def\graffile#1#2#3#4{%
    \leavevmode
    \raise -#4 \BOXTHEFRAME{%
        \hbox to #2{\raise #3\hbox to #2{\null #1\hfil}}}%
}%
\def\draftbox#1#2#3#4{%
 \leavevmode\raise -#4 \hbox{%
  \frame{\rlap{\protect\tiny #1}\hbox to #2%
   {\vrule height#3 width\z@ depth\z@\hfil}%
  }%
 }%
}%
\newif\ifwasdraft
\def\GRAPHIC#1#2#3#4#5{%
 \ifnum\draft=\@ne\draftbox{#2}{#3}{#4}{#5}%
  \else\graffile{#1}{#3}{#4}{#5}%
  \fi
 }%
\def\addtoLaTeXparams#1{%
    \edef\LaTeXparams{\LaTeXparams #1}}%
\newif\ifBoxFrame \BoxFramefalse
\newif\ifOverFrame \OverFramefalse
\newif\ifUnderFrame \UnderFramefalse
\def\BOXTHEFRAME#1{%
   \hbox{%
      \ifBoxFrame
         \frame{#1}%
      \else
         {#1}%
      \fi
   }%
}
\def\doFRAMEparams#1{\BoxFramefalse\OverFramefalse\UnderFramefalse\readFRAMEparams#1\end}%
\def\readFRAMEparams#1{%
 \ifx#1\end%
  \let\next=\relax
  \else
  \ifx#1i\dispkind=\z@\fi
  \ifx#1d\dispkind=\@ne\fi
  \ifx#1f\dispkind=\tw@\fi
  \ifx#1t\addtoLaTeXparams{t}\fi
  \ifx#1b\addtoLaTeXparams{b}\fi
  \ifx#1p\addtoLaTeXparams{p}\fi
  \ifx#1h\addtoLaTeXparams{h}\fi
  \ifx#1X\BoxFrametrue\fi
  \ifx#1O\OverFrametrue\fi
  \ifx#1U\UnderFrametrue\fi
  \ifx#1w
    \ifnum\draft=1\wasdrafttrue\else\wasdraftfalse\fi
    \draft=\@ne
  \fi
  \let\next=\readFRAMEparams
  \fi
 \next
 }%
\def\IFRAME#1#2#3#4#5#6{%
      \bgroup
      \let\QCTOptA\empty
      \let\QCTOptB\empty
      \let\QCBOptA\empty
      \let\QCBOptB\empty
      #6%
      \parindent=0pt%
      \leftskip=0pt
      \rightskip=0pt
      \setbox0 = \hbox{\QCBOptA}%
      \@tempdima = #1\relax
      \ifOverFrame
          \typeout{This is not implemented yet}%
          \show\HELP
      \else
         \ifdim\wd0>\@tempdima
            \advance\@tempdima by \@tempdima
            \ifdim\wd0 >\@tempdima
               \textwidth=\@tempdima
               \setbox1 =\vbox{%
                  \noindent\hbox to \@tempdima{\hfill\GRAPHIC{#5}{#4}{#1}{#2}{#3}\hfill}\\%
                  \noindent\hbox to \@tempdima{\parbox[b]{\@tempdima}{\QCBOptA}}%
               }%
               \wd1=\@tempdima
            \else
               \textwidth=\wd0
               \setbox1 =\vbox{%
                 \noindent\hbox to \wd0{\hfill\GRAPHIC{#5}{#4}{#1}{#2}{#3}\hfill}\\%
                 \noindent\hbox{\QCBOptA}%
               }%
               \wd1=\wd0
            \fi
         \else
            \ifdim\wd0>0pt
              \hsize=\@tempdima
              \setbox1 =\vbox{%
                \unskip\GRAPHIC{#5}{#4}{#1}{#2}{0pt}%
                \break
                \unskip\hbox to \@tempdima{\hfill \QCBOptA\hfill}%
              }%
              \wd1=\@tempdima
           \else
              \hsize=\@tempdima
              \setbox1 =\vbox{%
                \unskip\GRAPHIC{#5}{#4}{#1}{#2}{0pt}%
              }%
              \wd1=\@tempdima
           \fi
         \fi
         \@tempdimb=\ht1
         \advance\@tempdimb by \dp1
         \advance\@tempdimb by -#2%
         \advance\@tempdimb by #3%
         \leavevmode
         \raise -\@tempdimb \hbox{\box1}%
      \fi
      \egroup%
}%
\def\DFRAME#1#2#3#4#5{%
 \begin{center}
     \let\QCTOptA\empty
     \let\QCTOptB\empty
     \let\QCBOptA\empty
     \let\QCBOptB\empty
     \ifOverFrame 
        #5\QCTOptA\par
     \fi
     \GRAPHIC{#4}{#3}{#1}{#2}{\z@}
     \ifUnderFrame 
        \nobreak\par #5\QCBOptA
     \fi
 \end{center}%
 }%
\def\FFRAME#1#2#3#4#5#6#7{%
 \begin{figure}[#1]%
  \let\QCTOptA\empty
  \let\QCTOptB\empty
  \let\QCBOptA\empty
  \let\QCBOptB\empty
  \ifOverFrame
    #4
    \ifx\QCTOptA\empty
    \else
      \ifx\QCTOptB\empty
        \caption{\QCTOptA}%
      \else
        \caption[\QCTOptB]{\QCTOptA}%
      \fi
    \fi
    \ifUnderFrame\else
      \label{#5}%
    \fi
  \else
    \UnderFrametrue%
  \fi
  \begin{center}\GRAPHIC{#7}{#6}{#2}{#3}{\z@}\end{center}%
  \ifUnderFrame
    #4
    \ifx\QCBOptA\empty
      \caption{}%
    \else
      \ifx\QCBOptB\empty
        \caption{\QCBOptA}%
      \else
        \caption[\QCBOptB]{\QCBOptA}%
      \fi
    \fi
    \label{#5}%
  \fi
  \end{figure}%
 }%
\def\makeactives{
  \catcode`\"=\active
  \catcode`\;=\active
  \catcode`\:=\active
  \catcode`\'=\active
  \catcode`\~=\active
}
   \gdef\activesoff{%
      \def"{\string"}
      \def;{\string;}
      \def:{\string:}
      \def'{\string'}
      \def~{\string~}
    }
\def\FRAME#1#2#3#4#5#6#7#8{%
 \bgroup
 \@ifundefined{bbl@deactivate}{}{\activesoff}
 \ifnum\draft=\@ne
   \wasdrafttrue
 \else
   \wasdraftfalse%
 \fi
 \def\LaTeXparams{}%
 \dispkind=\z@
 \def\LaTeXparams{}%
 \doFRAMEparams{#1}%
 \ifnum\dispkind=\z@\IFRAME{#2}{#3}{#4}{#7}{#8}{#5}\else
  \ifnum\dispkind=\@ne\DFRAME{#2}{#3}{#7}{#8}{#5}\else
   \ifnum\dispkind=\tw@
    \edef\@tempa{\noexpand\FFRAME{\LaTeXparams}}%
    \@tempa{#2}{#3}{#5}{#6}{#7}{#8}%
    \fi
   \fi
  \fi
  \ifwasdraft\draft=1\else\draft=0\fi{}%
  \egroup
 }%
\def\TEXUX#1{"texux"}
\def\limfunc#1{\mathop{\rm #1}}%
\long\def\QQQ#1#2{%
     \long\expandafter\def\csname#1\endcsname{#2}}%
\long\def\QQA#1#2{}%
\def\QTR#1#2{{\csname#1\endcsname #2}}
\def\EXPAND#1[#2]#3{}%
\def\NOEXPAND#1[#2]#3{}%
\def\LaTeXparent#1{}%
\def\ChildStyles#1{}%
\def\ChildDefaults#1{}%
\def\QTagDef#1#2#3{}%
\def\QQfnmark#1{\footnotemark}
\def\makeatletter\input gnuindex.sty\makeatother\makeindex{\makeatletter\input gnuindex.sty\makeatother\makeindex}%
\def\initial#1{\bigbreak{\raggedright\large\bf #1}\kern 2\p@\penalty3000}}%
 \def\abstract{%
  \if@twocolumn
   \section*{Abstract (Not appropriate in this style!)}%
   \else \small 
   \begin{center}{\bf Abstract\vspace{-.5em}\vspace{\z@}}\end{center}%
   \quotation 
   \fi
  }%
   \def\registered{\relax\ifmmode{}\r@gistered
                    \else$\m@th\r@gistered$\fi}%
 \def\r@gistered{^{\ooalign
  {\hfil\raise.07ex\hbox{$\scriptstyle\rm\text{R}$}\hfil\crcr
  \mathhexbox20D}}}}{}%
\newdimen\theight
\def\Column{%
 \vadjust{\setbox\z@=\hbox{\scriptsize\quad\quad tcol}%
  \theight=\ht\z@\advance\theight by \dp\z@\advance\theight by \lineskip
  \kern -\theight \vbox to \theight{%
   \rightline{\rlap{\box\z@}}%
   \vss
   }%
  }%
 }%
\def\qed{%
 \ifhmode\unskip\nobreak\fi\ifmmode\ifinner\else\hskip5\p@\fi\fi
 \hbox{\hskip5\p@\vrule width4\p@ height6\p@ depth1.5\p@\hskip\p@}%
 }%
\def\miss{\hbox{\vrule height2\p@ width 2\p@ depth\z@}}%
\def\tcol#1{{\baselineskip=6\p@ \vcenter{#1}} \Column}  %
\def\newfmtname{LaTeX2e}
\def\chkcompat{%
   \if@compatibility
   \else
     \usepackage{latexsym}
   \fi
}
  \DeclareOldFontCommand{\rm}{\normalfont\rmfamily}{\mathrm}
  \DeclareOldFontCommand{\sf}{\normalfont\sffamily}{\mathsf}
  \DeclareOldFontCommand{\tt}{\normalfont\ttfamily}{\mathtt}
  \DeclareOldFontCommand{\bf}{\normalfont\bfseries}{\mathbf}
  \DeclareOldFontCommand{\it}{\normalfont\itshape}{\mathit}
  \DeclareOldFontCommand{\sl}{\normalfont\slshape}{\@nomath\sl}
  \DeclareOldFontCommand{\sc}{\normalfont\scshape}{\@nomath\sc}
\def\alpha{{\Greekmath 010B}}%
\def\beta{{\Greekmath 010C}}%
\def\gamma{{\Greekmath 010D}}%
\def\delta{{\Greekmath 010E}}%
\def\epsilon{{\Greekmath 010F}}%
\def\zeta{{\Greekmath 0110}}%
\def\eta{{\Greekmath 0111}}%
\def\theta{{\Greekmath 0112}}%
\def\iota{{\Greekmath 0113}}%
\def\kappa{{\Greekmath 0114}}%
\def\lambda{{\Greekmath 0115}}%
\def\mu{{\Greekmath 0116}}%
\def\nu{{\Greekmath 0117}}%
\def\xi{{\Greekmath 0118}}%
\def\pi{{\Greekmath 0119}}%
\def\rho{{\Greekmath 011A}}%
\def\sigma{{\Greekmath 011B}}%
\def\tau{{\Greekmath 011C}}%
\def\upsilon{{\Greekmath 011D}}%
\def\phi{{\Greekmath 011E}}%
\def\chi{{\Greekmath 011F}}%
\def\psi{{\Greekmath 0120}}%
\def\omega{{\Greekmath 0121}}%
\def\varepsilon{{\Greekmath 0122}}%
\def\vartheta{{\Greekmath 0123}}%
\def\varpi{{\Greekmath 0124}}%
\def\varrho{{\Greekmath 0125}}%
\def\varsigma{{\Greekmath 0126}}%
\def\varphi{{\Greekmath 0127}}%
\def\nabla{{\Greekmath 0272}}
\def\FindBoldGroup{%
   {\setbox0=\hbox{$\mathbf{x\global\edef\theboldgroup{\the\mathgroup}}$}}%
}
\def\Greekmath#1#2#3#4{%
    \if@compatibility
        \ifnum\mathgroup=\symbold
           \mathchoice{\mbox{\boldmath$\displaystyle\mathchar"#1#2#3#4$}}%
                      {\mbox{\boldmath$\textstyle\mathchar"#1#2#3#4$}}%
                      {\mbox{\boldmath$\scriptstyle\mathchar"#1#2#3#4$}}%
                      {\mbox{\boldmath$\scriptscriptstyle\mathchar"#1#2#3#4$}}%
        \else
           \mathchar"#1#2#3#4%
        \fi 
    \else 
        \FindBoldGroup
        \ifnum\mathgroup=\theboldgroup 
           \mathchoice{\mbox{\boldmath$\displaystyle\mathchar"#1#2#3#4$}}%
                      {\mbox{\boldmath$\textstyle\mathchar"#1#2#3#4$}}%
                      {\mbox{\boldmath$\scriptstyle\mathchar"#1#2#3#4$}}%
                      {\mbox{\boldmath$\scriptscriptstyle\mathchar"#1#2#3#4$}}%
        \else
           \mathchar"#1#2#3#4%
        \fi     	    
	  \fi}
\newif\ifGreekBold  \GreekBoldfalse
\let\SAVEPBF=\pbf
\def\pbf{\GreekBoldtrue\SAVEPBF}%
  \newcounter{equationnumber}  
  \def\mathletters{%
     \addtocounter{equation}{1}
     \edef\@currentlabel{\theequation}%
     \setcounter{equationnumber}{\c@equation}
     \setcounter{equation}{0}%
     \edef\theequation{\@currentlabel\noexpand\alph{equation}}%
  }
    \def\BibTeX{{\rm B\kern-.05em{\sc i\kern-.025em b}\kern-.08em
                 T\kern-.1667em\lower.7ex\hbox{E}\kern-.125emX}}}{}%
\def\AmS{{\protect\usefont{OMS}{cmsy}{m}{n}%
                A\kern-.1667em\lower.5ex\hbox{M}\kern-.125emS}}}{}%
\let\DOTSI\relax
\def\eat@#1{}%
\def\RIfM@{\relax\ifmmode}%
\def\FN@{\futurelet\next}%
\def\iint{\DOTSI\intno@\tw@\FN@\ints@}%
\def\iiint{\DOTSI\intno@\thr@@\FN@\ints@}%
\def\iiiint{\DOTSI\intno@4 \FN@\ints@}%
\def\idotsint{\DOTSI\intno@\z@\FN@\ints@}%
\def\ints@{\findlimits@\ints@@}%
\newif\iflimtoken@
\newif\iflimits@
\def\findlimits@{\limtoken@true\ifx\next\limits\limits@true
 \else\ifx\next\nolimits\limits@false\else
 \limtoken@false\ifx\ilimits@\nolimits\limits@false\else
 \ifinner\limits@false\else\limits@true\fi\fi\fi\fi}%
\def\multint@{\int\ifnum\intno@=\z@\intdots@                          
 \else\intkern@\fi                                                    
 \ifnum\intno@>\tw@\int\intkern@\fi                                   
 \ifnum\intno@>\thr@@\int\intkern@\fi                                 
 \int}
\def\multintlimits@{\intop\ifnum\intno@=\z@\intdots@\else\intkern@\fi
 \ifnum\intno@>\tw@\intop\intkern@\fi
 \ifnum\intno@>\thr@@\intop\intkern@\fi\intop}%
\def\intic@{%
    \mathchoice{\hskip.5em}{\hskip.4em}{\hskip.4em}{\hskip.4em}}%
\def\negintic@{\mathchoice
 {\hskip-.5em}{\hskip-.4em}{\hskip-.4em}{\hskip-.4em}}%
\def\ints@@{\iflimtoken@                                              
 \def\ints@@@{\iflimits@\negintic@
   \mathop{\intic@\multintlimits@}\limits                             
  \else\multint@\nolimits\fi                                          
  \eat@}
 \else                                                                
 \def\ints@@@{\iflimits@\negintic@
  \mathop{\intic@\multintlimits@}\limits\else
  \multint@\nolimits\fi}\fi\ints@@@}%
\def\intkern@{\mathchoice{\!\!\!}{\!\!}{\!\!}{\!\!}}%
\def\plaincdots@{\mathinner{\cdotp\cdotp\cdotp}}%
\def\intdots@{\mathchoice{\plaincdots@}%
 {{\cdotp}\mkern1.5mu{\cdotp}\mkern1.5mu{\cdotp}}%
 {{\cdotp}\mkern1mu{\cdotp}\mkern1mu{\cdotp}}%
 {{\cdotp}\mkern1mu{\cdotp}\mkern1mu{\cdotp}}}%
\def\RIfM@{\relax\protect\ifmmode}
\def\text{\RIfM@\expandafter\text@\else\expandafter\mbox\fi}
\let\nfss@text\text
\def\text@#1{\mathchoice
   {\textdef@\displaystyle\f@size{#1}}%
   {\textdef@\textstyle\tf@size{\firstchoice@false #1}}%
   {\textdef@\textstyle\sf@size{\firstchoice@false #1}}%
   {\textdef@\textstyle \ssf@size{\firstchoice@false #1}}%
   \glb@settings}
\def\textdef@#1#2#3{\hbox{{%
                    \everymath{#1}%
                    \let\f@size#2\selectfont
                    #3}}}
\newif\iffirstchoice@
\def\Let@{\relax\iffalse{\fi\let\\=\cr\iffalse}\fi}%
\def\vspace@{\def\vspace##1{\crcr\noalign{\vskip##1\relax}}}%
\def\multilimits@{\bgroup\vspace@\Let@
 \baselineskip\fontdimen10 \scriptfont\tw@
 \advance\baselineskip\fontdimen12 \scriptfont\tw@
 \lineskip\thr@@\fontdimen8 \scriptfont\thr@@
 \lineskiplimit\lineskip
 \vbox\bgroup\ialign\bgroup\hfil$\m@th\scriptstyle{##}$\hfil\crcr}%
\def\Sb{_\multilimits@}%
\def\endSb{\crcr\egroup\egroup\egroup}%
\def\Sp{^\multilimits@}%
\newdimen\ex@
\def\rightarrowfill@#1{$#1\m@th\mathord-\mkern-6mu\cleaders
 \hbox{$#1\mkern-2mu\mathord-\mkern-2mu$}\hfill
 \mkern-6mu\mathord\rightarrow$}%
\def\leftarrowfill@#1{$#1\m@th\mathord\leftarrow\mkern-6mu\cleaders
 \hbox{$#1\mkern-2mu\mathord-\mkern-2mu$}\hfill\mkern-6mu\mathord-$}%
\def\leftrightarrowfill@#1{$#1\m@th\mathord\leftarrow
\mkern-6mu\cleaders
 \hbox{$#1\mkern-2mu\mathord-\mkern-2mu$}\hfill
 \mkern-6mu\mathord\rightarrow$}%
\def\overrightarrow{\mathpalette\overrightarrow@}%
\def\overrightarrow@#1#2{\vbox{\ialign{##\crcr\rightarrowfill@#1\crcr
 \noalign{\kern-\ex@\nointerlineskip}$\m@th\hfil#1#2\hfil$\crcr}}}%
\def\overleftarrow{\mathpalette\overleftarrow@}%
\def\overleftarrow@#1#2{\vbox{\ialign{##\crcr\leftarrowfill@#1\crcr
 \noalign{\kern-\ex@\nointerlineskip}$\m@th\hfil#1#2\hfil$\crcr}}}%
\def\overleftrightarrow{\mathpalette\overleftrightarrow@}%
\def\overleftrightarrow@#1#2{\vbox{\ialign{##\crcr
   \leftrightarrowfill@#1\crcr
 \noalign{\kern-\ex@\nointerlineskip}$\m@th\hfil#1#2\hfil$\crcr}}}%
\def\underrightarrow{\mathpalette\underrightarrow@}%
\def\underrightarrow@#1#2{\vtop{\ialign{##\crcr$\m@th\hfil#1#2\hfil
  $\crcr\noalign{\nointerlineskip}\rightarrowfill@#1\crcr}}}%
\def\underleftarrow{\mathpalette\underleftarrow@}%
\def\underleftarrow@#1#2{\vtop{\ialign{##\crcr$\m@th\hfil#1#2\hfil
  $\crcr\noalign{\nointerlineskip}\leftarrowfill@#1\crcr}}}%
\def\underleftrightarrow{\mathpalette\underleftrightarrow@}%
\def\underleftrightarrow@#1#2{\vtop{\ialign{##\crcr$\m@th
  \hfil#1#2\hfil$\crcr
 \noalign{\nointerlineskip}\leftrightarrowfill@#1\crcr}}}%
\def\qopnamewl@#1{\mathop{\operator@font#1}\nlimits@}
\let\nlimits@\displaylimits
\def\setboxz@h{\setbox\z@\hbox}
\def\varlim@#1#2{\mathop{\vtop{\ialign{##\crcr
 \hfil$#1\m@th\operator@font lim$\hfil\crcr
 \noalign{\nointerlineskip}#2#1\crcr
 \noalign{\nointerlineskip\kern-\ex@}\crcr}}}}
 \def\rightarrowfill@#1{\m@th\setboxz@h{$#1-$}\ht\z@\z@
  $#1\copy\z@\mkern-6mu\cleaders
  \hbox{$#1\mkern-2mu\box\z@\mkern-2mu$}\hfill
  \mkern-6mu\mathord\rightarrow$}
\def\leftarrowfill@#1{\m@th\setboxz@h{$#1-$}\ht\z@\z@
  $#1\mathord\leftarrow\mkern-6mu\cleaders
  \hbox{$#1\mkern-2mu\copy\z@\mkern-2mu$}\hfill
  \mkern-6mu\box\z@$}
\def\projlim{\qopnamewl@{proj\,lim}}
\def\injlim{\qopnamewl@{inj\,lim}}
\def\varinjlim{\mathpalette\varlim@\rightarrowfill@}
\def\varprojlim{\mathpalette\varlim@\leftarrowfill@}
\def\varliminf{\mathpalette\varliminf@{}}
\def\varliminf@#1{\mathop{\underline{\vrule\@depth.2\ex@\@width\z@
   \hbox{$#1\m@th\operator@font lim$}}}}
\def\varlimsup{\mathpalette\varlimsup@{}}
\def\varlimsup@#1{\mathop{\overline
  {\hbox{$#1\m@th\operator@font lim$}}}}
\def\binom#1#2{{#1 \choose #2}}%
\def\align{\@verbatim \frenchspacing\@vobeyspaces \@alignverbatim
You are using the "align" environment in a style in which it is not defined.}
\let\csname endalign*\endcsname =\endtrivlist
\def\alignat{\@verbatim \frenchspacing\@vobeyspaces \@alignatverbatim
You are using the "alignat" environment in a style in which it is not defined.}
\let\csname endalignat*\endcsname =\endtrivlist
\def\xalignat{\@verbatim \frenchspacing\@vobeyspaces \@xalignatverbatim
You are using the "xalignat" environment in a style in which it is not defined.}
\let\csname endxalignat*\endcsname =\endtrivlist
\def\gather{\@verbatim \frenchspacing\@vobeyspaces \@gatherverbatim
You are using the "gather" environment in a style in which it is not defined.}
\let\csname endgather*\endcsname =\endtrivlist
\def\multiline{\@verbatim \frenchspacing\@vobeyspaces \@multilineverbatim
You are using the "multiline" environment in a style in which it is not defined.}
\let\csname endmultiline*\endcsname =\endtrivlist
\def\arrax{\@verbatim \frenchspacing\@vobeyspaces \@arraxverbatim
You are using a type of "array" construct that is only allowed in AmS-LaTeX.}
\def\tabulax{\@verbatim \frenchspacing\@vobeyspaces \@tabulaxverbatim
You are using a type of "tabular" construct that is only allowed in AmS-LaTeX.}
\let\csname endarrax*\endcsname =\endtrivlist
\let\csname endtabulax*\endcsname =\endtrivlist
\def\@@eqncr{\let\@tempa\relax
    \ifcase\@eqcnt \def\@tempa{& & &}\or \def\@tempa{& &}%
      \else \def\@tempa{&}\fi
     \@tempa
     \if@eqnsw
        \iftag@
           \@taggnum
        \else
           \@eqnnum\stepcounter{equation}%
        \fi
     \fi
     \global\tag@false
     \global\@eqnswtrue
     \global\@eqcnt\z@\cr}
 \def\endequation{%
     \ifmmode\ifinner 
      \iftag@
        \addtocounter{equation}{-1} 
        $\hfil
           \displaywidth\linewidth\@taggnum\egroup \endtrivlist
        \global\tag@false
        \global\@ignoretrue   
      \else
        $\hfil
           \displaywidth\linewidth\@eqnnum\egroup \endtrivlist
        \global\tag@false
        \global\@ignoretrue 
      \fi
     \else   
      \iftag@
        \addtocounter{equation}{-1} 
        \eqno \hbox{\@taggnum}
        \global\tag@false%
        $$\global\@ignoretrue
      \else
        \eqno \hbox{\@eqnnum}
        $$\global\@ignoretrue
      \fi
     \fi\fi
 } 
 \newif\iftag@ \tag@false
 \def\tag{\@ifnextchar*{\@tagstar}{\@tag}}
 \def\@tag#1{%
     \global\tag@true
     \global\def\@taggnum{(#1)}}
 \def\@tagstar*#1{%
     \global\tag@true
     \global\def\@taggnum{#1}%
}
\theoremstyle{definition}
\theoremstyle{remark}
\numberwithin{equation}{section}
\begin{document}
\title[regularity of homogeneous elliptic operators on $\Bbb{R}^{N}$]{$L^{p}$ regularity of homogeneous elliptic differential operators with
constant coefficients on $\Bbb{R}^{N}$ }
\author{Patrick J. Rabier}
\address{Department of mathematics, University of Pittsburgh, Pittsburgh, PA 15260}
\email{rabier@imap.pitt.edu}
\thanks{The useful comments of an anonymous referee are gratefully acknowledged.}
\subjclass{46E35, 35J15, 35J25}
\keywords{Homogeneous Sobolev space, embedding, growth estimates, Liouville theorem,
Calderon-Zygmund estimates, Kelvin transform}
\maketitle

\begin{abstract}
Let $A$ be a homogeneous elliptic differential operator of order $m$ on $%
\Bbb{R}^{N}$ with constant complex coefficients. A special case of the main
result is as follows: Suppose that $u\in L_{loc}^{1}$ and that $Au\in L^{p}$
for some $1<p<\infty .$ Then, all the partial derivatives of order $m$ of $u$
are in $L^{p}$ if and only if $|u|$ grows slower than $|x|^{m}$ at infinity,
provided that growth is measured in an $L^{1}$-averaged sense over balls
with increasing radii. The necessity provides an  alternative answer to the
pointwise growth question investigated with mixed success in the literature.
Only very few special cases of the sufficiency are already known, even when $%
A=\Delta .$

The full result gives a similar necessary and sufficient growth condition
for the derivatives of $u$ of any order $k\geq 0$ to be in $L^{p}$ when $Au$
satisfies a suitable (necessary) condition. This is generalized to exterior
domains, which sometimes introduces mandatory restrictions on $N$ and $p,$
and to Douglis-Nirenberg elliptic systems whose entries are homogeneous
operators with constant coefficients but possibly different orders, as the
Stokes system.
\end{abstract}

\section{Introduction\label{intro}}

It is understood that $\Bbb{R}^{N}$ is the domain of all function spaces.
The vast PDE literature offers only surprisingly few answers to the basic
question: If $u\in \mathcal{D}^{\prime }$ (distributions) and $\Delta u\in
L^{p}$ for some $1<p<\infty ,$ what extra condition should be required of $u$
to ensure that all the second order derivatives of $u$ are in $L^{p}$?

The same question with $\Delta $ replaced with, say, $\Delta -1,$ is
answered by the classical $L^{p}$ regularity theory of elliptic PDEs. In
this case, a necessary and sufficient extra condition is simply $u\in 
\mathcal{S}^{\prime }$ (tempered distributions) since  $\Delta u-u\in L^{p}$
ensures that $u\in W^{2,p}$ (classical Sobolev space). Of course, this is
trivially false for the Laplace operator when $N>1.$

The known sufficient conditions, such as $u\in L^{p}$ (for then $\Delta
u-u\in L^{p}$) or the weaker $(1+|x|^{2})^{-1}u\in L^{p}$ (an implicit
by-product of a result of Nirenberg and Walker in weighted spaces 
\cite[Theorem 3.1]{NiWa73}) or $\nabla u\in (L^{q})^{N}$ for some $%
1<q<\infty $ (Galdi \cite[Remark V.5.3, p. 349]{Ga11}, by duality and
bootstrapping), do not point to any recognizable common feature, especially
since the proof of their sufficiency is each time completely different.

In this paper, we show, among other things, that if $A$ is any \emph{\
homogeneous} elliptic operator of order $m$ with constant complex
coefficients and $Au\in L^{p},$ all the partial derivatives of order $m$ of $%
u$ are in $L^{p}$ if and only if $u$ satisfies a very simple \emph{necessary
and sufficient} side condition. We shall actually prove significantly more
general results in the same spirit. Here and everywhere in the paper,
``homogeneous'' is synonymous with ``pure order'', that is, $A$ is of the
form 
\begin{equation}
Au=i^{m}\sum_{|\alpha |_{1}=m}a_{\alpha }\partial ^{\alpha }u,  \label{1}
\end{equation}
where $m\in \Bbb{N}$ (to avoid trivialities, we rule out $m=0$) and $%
a_{\alpha }\in \Bbb{C}$ and where $|\alpha |_{1}:=\alpha _{1}+\cdots +\alpha
_{N}.$ Recall that the ellipticity assumption means 
\begin{equation}
A(\xi ):=\sum_{|\alpha |_{1}=m}a_{\alpha }\xi ^{\alpha }\neq 0\text{ for
every }\xi \in \Bbb{R}^{N}\backslash \{0\}.  \label{2}
\end{equation}

If $k\in \Bbb{N}_{0}:=\Bbb{N}\cup \{0\}$ and $1\leq p\leq \infty ,$ we
define the homogeneous Sobolev space (also known as Beppo Levi space, after
Deny and Lions \cite{DeLi54}; various other notations, e.g., $L^{k,p},\hat{W}%
^{k,p},BL^{k,p},$ etc., are used in the literature) 
\begin{equation}
D^{k,p}:=\{u\in \mathcal{D}^{\prime }:\partial ^{\alpha }u\in L^{p},|\alpha
|_{1}=k\}=\{u\in L_{loc}^{1}:\partial ^{\alpha }u\in L^{p},|\alpha |_{1}=k\},
\label{3}
\end{equation}
where the second equality follows from the well-known fact that a
distribution with first-order partial derivatives in $L_{loc}^{p}$ is itself
in $L_{loc}^{p}$ (Schwartz \cite[Theorem XV, p. 181]{Sc66}).

When $1<p<\infty ,$ the necessary and sufficient condition for $Au\in L^{p}$
to imply $u\in D^{m,p}$ given in this paper is just a \emph{growth limitation%
} on $|u|$ at infinity, but the correct concept of growth is not a pointwise
one. This is made precise through the introduction of spaces $M^{s,q}$ and
subspaces $M_{0}^{s,q}$ for $s\in \Bbb{R}$ and $1\leq q\leq \infty $
(Section \ref{spaces}). In essence, $u\in M^{s,q}$ ($M_{0}^{s,q}$) if and
only if $|u|$ does not grow faster (grows slower) than $|x|^{s}$ after both
are $L^{q}$-averaged over balls with increasing radii.

On the other hand, since growth slower than $|x|^{0}=1$ must be viewed as
decay, the functions of $M_{0}^{s,q}$ with $s\leq 0,$ all contained in $%
M_{0}^{0,1},$ tend to $0$ at infinity in an $L^{q}$-average sense. These
functions are related to, but have more structure than, the ``functions
vanishing at infinity'' of Lieb and Loss \cite{LiLo01}.

The simplest special case of the main result reads:

\begin{theorem}
\label{thA}If $A$ in (\ref{1}) is elliptic and $u\in \mathcal{D}^{\prime },$
then $u\in D^{m,p}$ for some $1<p<\infty $ if and only if $Au\in L^{p}$ and $%
u\in M_{0}^{m,1}.$ In other words, 
\begin{equation}
D^{m,p}=\{u\in M_{0}^{m,1}:Au\in L^{p}\}.  \label{4}
\end{equation}
\end{theorem}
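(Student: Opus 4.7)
The proof splits into necessity and sufficiency.

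Necessity of $Au \in L^{p}$ is immediate from (\ref{3}) and (\ref{1}): if all order-$m$ partials of $u$ lie in $L^{p}$, so does the fixed linear combination $Au$. For the necessity of $u \in M_{0}^{m,1}$ I would invoke a Poincar\'e-type inequality for the homogeneous Sobolev space: on every ball $B(0,R)$ there is a polynomial $P_{R}$ of degree $<m$ with
\[
\|u-P_{R}\|_{L^{p}(B(0,R))}\lesssim R^{m}\,\|\nabla^{m}u\|_{L^{p}(B(0,R))},
\]
and the right-hand side is $o(R^{m})$ as $R\to\infty$ because $\nabla^{m}u\in L^{p}(\Bbb{R}^{N})$, while $P_{R}$ itself is $O(R^{m-1})$. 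After $L^{1}$-averaging on $B(0,R)$ and dividing by $R^{m}$, the quotient tends to $0$, so $u\in M_{0}^{m,1}$. I expect this embedding $D^{m,p}\hookrightarrow M_{0}^{m,1}$ to be available as a preparatory result in the paper.

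For sufficiency, assume $Au\in L^{p}$ and $u\in M_{0}^{m,1}$, and let $E\in\mathcal{S}'$ be a homogeneous fundamental solution of $A$ (of degree $m-N$ if $m<N$, with a logarithmic correction otherwise). Such $E$ exists because $A$ is homogeneous elliptic with constant coefficients, and for each multi-index $|\alpha|_{1}=m$ the distribution $\partial^{\alpha}E$ is a Calder\'on--Zygmund kernel: a principal-value kernel homogeneous of degree $-N$, smooth off the origin, with mean zero on spheres. Set
\[
v:=E*(Au),
\]
which is a well-defined tempered distribution since $Au\in L^{p}\subset\mathcal{S}'$. Then $Av=Au$ and, for each $|\alpha|_{1}=m$,
\[
\partial^{\alpha}v=(\partial^{\alpha}E)*(Au)\in L^{p}
\]
by the Calder\'on--Zygmund theorem (here $1<p<\infty$ is essential), so $v\in D^{m,p}$.

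Set $w:=u-v$. Then $Aw=0$ in $\mathcal{D}'$, so $w$ is real-analytic by ellipticity. The necessity direction just proved, applied to $v$, gives $v\in M_{0}^{m,1}$, and combined with $u\in M_{0}^{m,1}$ this yields $w\in M_{0}^{m,1}$. A Liouville-type theorem for homogeneous elliptic constant-coefficient operators asserts that a distributional solution of $Aw=0$ with polynomial growth is a polynomial annihilated by $A$; the sharp growth bound packaged in $M_{0}^{m,1}$ then forces $w$ to be a polynomial of degree strictly less than $m$. Consequently $\partial^{\alpha}w=0$ for every $|\alpha|_{1}=m$, whence $\partial^{\alpha}u=\partial^{\alpha}v\in L^{p}$ for every such $\alpha$, i.e.\ $u\in D^{m,p}$.

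The crux is the last step: one must translate the qualitative ``grows slower than $|x|^{m}$'' condition $u\in M_{0}^{m,1}$ into the quantitative conclusion that the polynomial part of $w$ has degree \emph{strictly} less than $m$. This is exactly where the distinction between $M^{m,1}$ (no faster than $|x|^{m}$, which still admits degree-$m$ polynomials) and $M_{0}^{m,1}$ (strictly slower, which excludes them) is decisive, and its verification should rest on the sharp growth properties of the spaces $M_{0}^{s,q}$ developed earlier in the paper. The other potentially delicate point is verifying that $v=E*(Au)$ is a well-defined tempered distribution and that the naive identities $Av=Au$, $\partial^{\alpha}v=(\partial^{\alpha}E)*(Au)$ hold rigorously when $E$ is only homogeneous (and possibly carries a logarithm); this is routine but must be done via a regularization of $Au$ and a passage to the limit.
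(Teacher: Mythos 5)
Your overall architecture matches the paper's: prove the embedding $D^{m,p}\subset M_{0}^{m,1}$ for necessity, then for sufficiency produce $v\in D^{m,p}$ with $Av=Au$ and invoke a Liouville theorem (the paper's Corollary~\ref{cor5}) to conclude $u-v$ is a polynomial of degree $<m$. Your sufficiency argument is essentially the paper's Lemma~\ref{lm10} and Theorem~\ref{th12} phrased in Calder\'on--Zygmund language instead of Mikhlin multipliers: the paper never forms $E*(Au)$ for general $Au\in L^{p}$ but instead applies the Mikhlin theorem to $\mathcal{F}^{-1}(\xi^{\alpha}A(\xi)^{-1}\mathcal{F})$ acting on $\mathcal{C}_{0}^{\infty}$ and passes to the limit in $\dot{D}^{m,p}$, so the technical scruples you flag at the end (well-definedness of $E*(Au)$, validity of $\partial^{\alpha}v=(\partial^{\alpha}E)*(Au)$) are avoided rather than resolved by mollification. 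Both routes buy the same estimate; the paper's is a bit cleaner because it never leaves $\mathcal{S}'$ with a compactly supported convolvend.

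The genuine gap is in your necessity step, where you assert that the Poincar\'e polynomial $P_{R}$ is $O(R^{m-1})$. $P_{R}$ is a polynomial of degree $\leq m-1$ whose \emph{coefficients depend on $R$} (they are, say, the $L^{2}(B_{R})$-projection of $u$ onto $\mathcal{P}_{m-1}$), and you have not shown that $\|P_{R}\|_{\infty,B_{R}}=O(R^{m-1})$; absent a bound on how those coefficients grow with $R$, the second term in the triangle inequality $R^{-m-N}\int_{B_{R}}|u|\leq R^{-m-N}\int_{B_{R}}|u-P_{R}|+R^{-m-N}\int_{B_{R}}|P_{R}|$ is uncontrolled. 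This is precisely the delicate point, and it is exactly what the paper's Theorem~\ref{th7} is engineered to handle: instead of Poincar\'e with a floating polynomial, the paper derives the scale-comparison inequality (\ref{11}) relating $\|u\|_{q,B_{R}}$ to $\|u\|_{q,B_{\lambda R}}$ and $R\|\,|\nabla u|\,\|_{q,B_{R}}$ via radial integration, and then feeds it into the iterative bootstrap Lemma~\ref{lm6} to get a uniform bound on $R^{-s-1-N/q}\|u\|_{q,B_{R}}$ without ever introducing a polynomial remainder. Your sketch can in principle be repaired by a telescoping estimate on $P_{2R}-P_{R}$ across dyadic scales, but as written the step is not justified and it is the one place where the argument would fail without additional work.
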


It is rather remarkable that $u\in D^{m,p}$ -a matter of integrability of $%
\nabla ^{m}u$ at infinity since $Au\in L^{p}$- depends only upon the growth
of $u$ itself and, in addition, that this growth can be evaluated in a $p$%
-independent $L^{1}$ sense. Although the spaces $M^{s,q}$ with $q>1$ are not
involved in this criterion, they are still important for various technical
reasons and in the applications.

In particular, the characterization (\ref{4}) yields an estimate of the
growth at infinity of the functions of the space $D^{m,p}.$ Their \emph{\
pointwise} growth was investigated by Mizuta \cite{Mi86} and, earlier, by
Fefferman \cite{Fe74}, Portnov \cite{Po74}, Uspenski\u {\i } \cite{Us61},
etc. When $mp>N,$ Mizuta's estimates are uniform, but when $mp\leq N$ (so
that functions of $D^{m,p}$ need not be continuous), they are only valid
outside some set thinning out at infinity, which makes them much harder to
use in practice. Uniform pointwise estimates when $m=1$ can also be found in
Galdi's book \cite{Ga11}, but only for functions of $D^{1,p}\cap D^{1,q}$
for some $q>N.$ They coincide with Mizuta's when $p=q>N.$ It has not been
proved, or even suggested, that such pointwise estimates, plus $Au\in L^{p},$
imply $u\in D^{m,p}.$ In other words, there is no prior variant of Theorem 
\ref{thA}.

With a suitable (standard) definition of $D^{k,p}$ when $k<0,$ we actually
prove that, more generally, $u\in D^{m+\kappa ,p}$ for some integer $\kappa
\geq -m$ if and only if $Au\in D^{\kappa ,p}$ and $u\in M_{0}^{m+\kappa ,1}$
(Theorem \ref{th13}). This does not follow inductively from Theorem \ref{thA}%
. When $k<0,$ not only the distributions of $D^{k,p}$ are generally not
functions, but there is no limitation, pointwise or averaged, to the growth
at infinity of the functions of $D^{k,p}$ (Remark \ref{rm4}). For that
reason, there is no predictable generalization of Theorem \ref{thA} when $%
\kappa <-m.$ Also, Theorem \ref{thA} breaks down when $A$ is not
homogeneous. Its validity when $A$ is homogeneous with variable coefficients
is a delicate issue that we shall not address here. It may be false even for
uniformly elliptic operators with smooth, bounded and Lipschitz continuous
coefficients.

The characterization (\ref{4}) calls for a closer look at the functions that
can be found in $M_{0}^{m,1}.$ For the sake of argument, assume $m=2.$ It
can be shown that $u\in M_{0}^{2,1}$ in a variety of special cases,
including:

(i) $u\in L^{q,\sigma }$ or $\nabla u\in (L^{q,\sigma })^{N}$ (Lorentz
spaces, $q>1,\sigma \leq \infty $ or $q=\sigma =1;$ see Example \ref{ex1}
and Theorem \ref{th7}).

(ii) $u\in L_{\Phi }$ or $\nabla u\in (L_{\Phi })^{N}$ where $L_{\Phi }$ is
any Orlicz space; see Example \ref{ex2} and Theorem \ref{th7}.

(iii) $u\in L^{\mathbf{q}}$ or $\nabla u\in (L^{\mathbf{q}})^{N}$ (mixed
norm spaces, $\mathbf{q}=(q_{1},...,q_{N})$ with $1\leq q_{1},...,q_{N}\leq
\infty ;$ see Example \ref{ex3} and Theorem \ref{th7}).

(iv) $u\in W_{loc}^{k,\infty }$ and $\lim_{|x|\rightarrow \infty
}|x|^{-s}|\nabla ^{k}u(x)|=0$ with $0\leq k\leq 2$ and $s\leq 2-k$ (see
Theorem \ref{th3} (ii) and Theorem \ref{th7}).

(v) $(1+|x|)^{-s-N/q}|\nabla ^{k}u|\in L^{q}$ with $1\leq q\leq \infty ,$ $%
0\leq k\leq 2$ and $s<2-k$ (see Theorem \ref{th3}, Remark \ref{rm1} and
Theorem \ref{th7}).

Thus, any of the conditions (i) to (v) together with $\Delta u\in L^{p},$ or
more generally $Au\in L^{p}$ where $A$ is homogeneous second order elliptic
with constant coefficients, implies $u\in D^{2,p}.$ Note that if the
coefficients are not real, $A$ is not reducible to $\Delta $ by a linear
change of variables. The known cases mentioned earlier when $A=\Delta $ are
covered by one or more of these conditions. Evidently, $u\in L^{p}$ fits
within (i), (ii), (iii) and (v), whereas $(1+|x|^{2})^{-1}u\in L^{p}$ is (v)
with $s=2-N/p,k=0$ and $q=p.$ On the other hand, $\nabla u\in (L^{q})^{N}$
with $1<q<\infty $ is also accounted for by (i), (ii), (iii) and (v). Any of
these conditions shows that the values $q=1$ and $q=\infty $ can be
included, even though the argument used in \cite{Ga11} breaks down. In fact,
by (v) with $k=1,$ it suffices that $(1+|x|)^{-t}|\nabla u|\in L^{q}$ with $%
t<1+N/q$ and $1\leq q\leq \infty .$

By (v) with $k=2$ and Theorem \ref{thA}, $(1+|x|)^{-t}|\nabla ^{2}u|\in
L^{q} $ with $1\leq q<\infty $ and $t<N/q$ and $\Delta u\in L^{p}$ with $%
1<p<\infty $ imply $u\in D^{2,p}.$ In particular, if $u\in D^{2,q}$ for some 
$1\leq q<\infty $ and $\Delta u\in L^{p},$ then $u\in D^{2,p}$ (let $t=0;$
if $q=\infty ,$ quadratic harmonic polynomials are counter examples). Also,
if $(1+|x|)^{-t}|\nabla ^{2}u|\in L^{p}$ with $t<N/p$ (weaker than $u\in
D^{2,p}$ if $t>0$) and $\Delta u\in L^{p},$ then $u\in D^{2,p}.$

We now come to the organization of the paper. Section \ref{spaces} is
devoted to the definition and basic properties of the spaces $M^{s,q}.$ With
occasional minor modifications, the growth limitations embodied in these
spaces have already been used extensively when $q=\infty $ or $q=2$ and, in
some instances, when $q=1,$ in connection with Liouville-type theorems (\cite
{Ar01}, \cite{AvLi89}, \cite{LiWa01}, \cite{Lin96}, \cite{MoSt92}) but not
in regularity issues. There seems to have been no prior incentive to
incorporate these growth limitations into a family of function spaces and
the other values of $q$ have apparently been ignored.

The most important feature of the spaces $M^{s,q}$ is that ``integration'',
i.e., passing from $\nabla u$ to $u,$ takes $(M^{s,q})^{N}$ into $M^{s+1,q}$
when $s>-1.$ This is shown in Section \ref{embedding} (Theorem \ref{th7}),
where we also obtain the embedding of $D^{k,p}$ into $M^{s,p}$ for suitable $%
s$ as a straightforward by-product (Theorem \ref{th8}).

This embedding is one of the main ingredients for the proof of Theorem \ref
{th13} (the general form of Theorem \ref{thA}), given in Section \ref
{regularity}. This proof also depends on properties of homogeneous elliptic
operators acting on homogeneous Sobolev spaces (Theorem \ref{th12}). In
addition to new $W^{k,p}$ regularity results which, in particular, do not
require strong ellipticity (Corollary \ref{cor14}), four examples show how
Theorem \ref{th13} and the various properties of the spaces $M^{s,q}$ can be
used in practice.

In Section \ref{exterior}, we generalize Theorem \ref{th13} to exterior
domains (Theorem \ref{th18}). When homogeneous Sobolev spaces of negative
order are involved, this is not a routine variant because passing to an
exterior domain introduces \emph{necessary }restrictions on $N$ and $p,$ not
needed in the whole space.

We also take advantage of the exterior domain setting to show how the Kelvin
transform method yields solutions of boundary value problems in $M_{0}^{0,1}$
(Theorem \ref{th20}). Thus, as noted earlier, these solutions vanish at
infinity in a generalized sense. The physical relevance of solutions
vanishing at infinity has been discussed at length in the literature,
notably in Dautray and Lions \cite{DaLi90}. For obvious reasons, $%
M_{0}^{0,1} $ -larger than any $L^{p}$ space with $p<\infty $- has not
previously been part of this discussion.

In Section \ref{systems}, Theorem \ref{th13} is extended to
Douglis-Nirenberg elliptic systems with constant coefficients when the
entries are homogeneous operators with possibly different orders (Theorem 
\ref{th22}), as is the case with the Stokes system. A variant of a trick
used long ago by Malgrange \cite{Ma56} for other purposes allows for a
convenient reduction to the scalar case.

\textbf{Notation.} The general notation is standard. Everywhere, $B_{R}$ is
the euclidean open ball with center $0$ and radius $R>0$ in $\Bbb{R}^{N}$
and, depending on context, $|\cdot |$ is either the euclidean norm or the
Lebesgue measure. On the other hand, $|\cdot |_{1}$ is the $\ell ^{1}$ norm
(used only with multi-indices). The notation $||\cdot ||_{p,E},$ abbreviated 
$||\cdot ||_{p}$ when $E=\Bbb{R}^{N},$ is used for the norm of $L^{p}(E).$
Also, $p^{\prime }$ denotes the H\"{o}lder conjugate of $p\in [1,\infty ].$

Differentiation is always understood in the weak (distribution) sense and $%
\nabla ^{k}u$ is the symmetric tensor of partial derivatives of order $k\in 
\Bbb{N}$ of the distribution $u.$ Of course, $\nabla u$ is used instead of $%
\nabla ^{1}u.$ As is customary, $\mathcal{S}$ and $\mathcal{S}^{\prime }$
refer to the Schwartz space and its topological dual (tempered
distributions), respectively. Fourier transform on those spaces is denoted
by $\mathcal{F},$ with inverse $\mathcal{F}^{-1}.$ We shall also use the
convenient ``hat'' notation $\widehat{u}:=\mathcal{F}u.$

If $d\in \Bbb{N}_{0},$ we let $\mathcal{P}_{d}$ denote the space of
polynomials on $\Bbb{R}^{N}$ of degree at most $d$ with complex coefficients
and $[u]_{d}$ is the equivalence class of the function $u$ modulo $\mathcal{P%
}_{d}.$ It will be convenient to set $\mathcal{P}_{d}:=\{0\}$ if $d<0$ and $%
\mathcal{P}:=\cup _{d}\mathcal{P}_{d}.$ Lastly, if $X$ and $Y$ are
topological spaces, $X\hookrightarrow Y$ means that $X$ is continuously
embedded into $Y.$

In inequalities, $C>0$ denotes a constant independent of the functions
involved, whose value may change from place to place.

\section{The spaces $M^{s,q}$\label{spaces}}

Unless stated otherwise, $s\in \Bbb{R}$ and $1\leq q\leq \infty .$ We define 
\begin{equation}
M^{s,q}:=\{u\in L_{loc}^{q}:\sup_{R\geq
1}R^{-s}|B_{R}|^{-1/q}||u||_{q,B_{R}}<\infty \}  \label{6}
\end{equation}
and 
\begin{equation}
M_{0}^{s,q}:=\{u\in L_{loc}^{q}:\lim_{R\rightarrow \infty
}R^{-s}|B_{R}|^{-1/q}||u||_{q,B_{R}}=0\},  \label{7}
\end{equation}
where $|B_{R}|^{-1/q}:=1$ if $q=\infty .$ Obviously, $R^{-s-N/q}$ may -and
often will- be substituted for $R^{-s}|B_{R}|^{-1/q}$ in (\ref{6}) and (\ref
{7}). To reconcile these definitions with the comments in the Introduction,
observe that $R^{s+N/q}$ is proportional to $||\,|x|^{s}||_{q,B_{R}}$ when $%
s>-N/q.$

The possible resemblance with Morrey spaces, maximal functions, etc., is
formal at best. In (\ref{6}), the center $0$ of the balls $B_{R}$ is fixed
and the supremum is not taken over all radii $R>0.$ However, there is no
difficulty in showing that as long as $R_{0}>0$ and $x_{0}\in \Bbb{R}^{N}$
are fixed, the definition of $M^{s,q}$ is unchanged if the condition $R\geq
1 $ is replaced with $R\geq R_{0}$ and if all the balls are centered at $%
x_{0}. $

Notice that $M^{s,q}=\{0\}$ if $s<-N/q$ and $M_{0}^{s,q}=\{0\}$ if $s\leq
-N/q.$ Accordingly, all the results quoted without limitation about $s$ are
trivial in these cases. It is equally obvious that $M^{-N/q,q}=L^{q}$ and
that $M^{s_{1},q}\subset M^{s_{2},q}$ and $M_{0}^{s_{1},q}\subset
M_{0}^{s_{2},q}$if $s_{1}\leq s_{2},$ whereas $M^{s_{1},q}\subset
M_{0}^{s_{2},q}$ if $s_{1}<s_{2}$. Also, by H\"{o}lder's inequality, $%
M^{s,q_{2}}\subset M^{s,q_{1}}$ and $M_{0}^{s,q_{2}}\subset M_{0}^{s,q_{1}}$
if $q_{1}\leq q_{2}.$

To summarize, given $q,$ the nontrivial spaces $M^{s,q}$ start with $L^{q}$
when $s=-N/q$ and get larger as $s$ is increased. On the other hand, given $%
s,$ they get smaller as $q$ is increased. The practical value of this double
linear ordering cannot be overemphasized. The spaces $M_{0}^{s,q}$ have
similar properties, except that, given $q,$ there is no smallest nontrivial
space $M_{0}^{s,q}.$

Many classical function spaces are subspaces of some $M^{s,q}$ space.
Examples follow.

\begin{example}
\label{ex1} The Lorentz space $L^{q,\sigma },1<q<\infty ,1\leq \sigma \leq 
\infty ,$ is contained in $M^{-N/q,1}\subset M_{0}^{s,1}$ if $s>-N/q.$ Since 
$L^{q,\sigma }\subset L^{q,\infty },$ it suffices to prove that $L^{q,\infty %
}\subset M^{-N/q,1}.$ The norm of $u\in L^{q,\infty }$ is $||u||_{(q,\infty %
)}:=\sup_{t>0}t^{-1+1/q}\int_{0}^{t}u^{*}(\tau )d\tau <\infty ,$ where $u^{*}
$ is the decreasing rearrangement of $u.$ On the other hand, if $E$ is a
measurable subset of finite measure, then $|E|^{-1}\int_{E}|u|\leq %
|E|^{-1}\int_{0}^{|E|}u^{*}(\tau )d\tau $ (\cite[Theorem 7.3.1, p. 82]{Ga07}%
, \cite[Lemma 3.17, p. 201]{StWe71}). Thus, $|E|^{-1}\int_{E}|u|\leq %
|E|^{-1/q}||u||_{(q,\infty )}.$ By using this with $E=B_{R},$ it follows
that $u\in M^{-N/q,1}$ and that the embedding $L^{q,\sigma }\subset
M^{-N/q,1}$ is continuous. On the other hand, $L^{1,\sigma }\not\subset L_{loc}^{1}$ 
if $\sigma >1$ is not a subspace of any $M^{s,q}.$
\end{example}

\begin{example}
\label{ex2} Let $L_{\Phi }$ be the Orlicz space corresponding to the Young
function $\Phi $ (\cite{BeSh88}, \cite{ON65}). Given $1\leq q<\infty ,$
assume that $t^{q}\leq \Phi (\lambda t)$ if $t\geq t_{0}$ for some $\lambda
>0$ and $t_{0}\geq 0$ ($\lambda $ and $t_{0}$ always exist if $q=1;$ just
choose $t_{0}>0,$ pick $\lambda $ large enough that $\Phi (\lambda t_{0})%
\geq t_{0}$ and use the convexity of $\Phi $). If $v$ is Lebesgue
measurable, then $\int_{B_{R}}|v|^{q}\leq t_{0}^{q}|B_{R}|+\int_{\Bbb{R}%
^{N}}\Phi (\lambda |v|)$ for every $R>0.$ In particular, if $u\in L_{\Phi
}\backslash \{0\},$ the choice $v:=u/\lambda ||u||_{\Phi }$ (Luxemburg norm)
yields $|B_{R}|^{-1}\int_{B_{R}}|u|^{q}\leq \lambda ^{q}||u||_{\Phi
}^{q}(t_{0}^{q}+|B_{R}|^{-1}).$ This shows that (i) $L_{\Phi }\subset
M^{0,q}\subset M_{0}^{s,q}$ for every $s>0$ (true for every $\Phi $ if $q=1$%
), (ii) $L_{\Phi }\subset M^{-N/q,q}=L^{q}$ if $t_{0}=0$ and (iii) $L_{\Phi
}\subset M_{0}^{0,q}$ if $\lambda t_{0}$ can be chosen arbitrarily small. In
particular: (iv) $L_{\Phi }\subset M_{0}^{0,1}$ if and only if $1\notin
L_{\Phi }.$ The necessity is obvious. Conversely, if $1\notin L_{\Phi },$
then $\Phi >0$ on $(0,\infty ),$ so that $\Phi $ has a continuous inverse $%
\Phi ^{-1}$ defined on some interval $[0,b)$ with $0<b\leq \infty .$ For $%
t_{0}<b,$ let $\lambda :=\Phi ^{-1}(t_{0})/t_{0}.$ Then, $t\leq \Phi
(\lambda t)$ if $t\geq t_{0}$ by the monotonicity of $\Phi (\lambda t)/t$
and $\Phi (\lambda t_{0})/t_{0}=1.$ Since $\lambda t_{0}=\Phi
^{-1}(t_{0})\rightarrow 0$ as $t_{0}\rightarrow 0,$ the result follows from
(iii).
\end{example}

In Example \ref{ex2}, $L_{\Phi }\subset M^{0,1}$ for every Orlicz space $%
L_{\Phi }$ can be quickly seen from $L_{\Phi }\subset L^{1}+L^{\infty }$ and 
$L^{1}=M^{-N,1}\subset M^{0,1},L^{\infty }=M^{0,\infty }\subset M^{0,1}.$

\begin{example}
\label{ex3}If $\mathbf{q}:=(q_{1},...,q_{N})$ with $1\leq q_{1},...,q_{N}%
\leq \infty ,$ the space $L^{\mathbf{q}}$ (see \cite{BeIlNi78}) is contained
in $M^{s,q}$ with $s:=-\sum_{i=1}^{N}q_{i}^{-1}$ and $q:=\min
\{q_{1},...,q_{N}\}.$ This follows from the remark that the definition of $%
M^{s,q}$ is unchanged if balls are replaced with cubes.
\end{example}

It is readily checked that 
\begin{equation}
||u||_{M^{s,q}}:=\sup_{R\geq 1}R^{-s-N/q}||u||_{q,B_{R}},  \label{8}
\end{equation}
is a well defined norm on $M^{s,q}.$ The proofs of the first two theorems
are routine and left to the reader (parts (ii) and (iv) of Theorem \ref{th1}
were noticed earlier).

\begin{theorem}
\label{th1}(i) $M^{s,q}$ is a Banach space for the norm (\ref{8}).\newline
(ii) $M^{s_{1},q}\hookrightarrow M^{s_{2},q}$ if $s_{1}\leq s_{2}$ and $%
M^{s,q_{2}}\hookrightarrow M^{s,q_{1}},$ $M_{0}^{s,q_{2}}\hookrightarrow
M_{0}^{s,q_{1}}$ if $q_{1}\leq q_{2}.$\newline
(iii) $M_{0}^{s,q}$ is a closed subspace of $M^{s,q}.$\newline
(iv) $M^{s_{1},q}\hookrightarrow M_{0}^{s_{2},q}$ if $s_{1}<s_{2}.$
\end{theorem}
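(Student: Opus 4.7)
The plan is to verify all four parts by straightforward arguments, reusing only the standard properties of $L^{q}$ on bounded sets, H\"older's inequality and the fact that $R^{-s}$ is monotone in $s$ on $R\geq 1$. None of the four parts requires more than bookkeeping once the right observation is isolated.

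For part (i), norm axioms follow immediately from the sublinearity of $||\cdot||_{q,B_{R}}$ and the fact that the supremum of a family of seminorms is a norm as soon as $||u||_{M^{s,q}}=0$ forces $u=0$ in $L_{loc}^{q}$ (take $R=n\to\infty$). For completeness, given a Cauchy sequence $(u_{n})$ in $M^{s,q}$, the inequality $||u_{n}-u_{m}||_{q,B_{R}}\leq R^{s+N/q}||u_{n}-u_{m}||_{M^{s,q}}$ for each fixed $R\geq1$ exhibits $(u_{n})$ as Cauchy in $L^{q}(B_{R})$, hence it converges in $L_{loc}^{q}$ to some $u$. Given $\varepsilon>0$, pick $n_{0}$ with $||u_{n}-u_{m}||_{M^{s,q}}\leq\varepsilon$ for $n,m\geq n_{0}$; letting $m\to\infty$ and applying Fatou's lemma (or simply continuity of the $L^{q}(B_{R})$ norm) gives $R^{-s-N/q}||u_{n}-u||_{q,B_{R}}\leq\varepsilon$ for every $R\geq1$, so $u_{n}-u\in M^{s,q}$ (hence $u\in M^{s,q}$) and $||u_{n}-u||_{M^{s,q}}\to0$.

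Part (ii) reduces to two elementary comparisons. If $s_{1}\leq s_{2}$, then $R\geq1$ yields $R^{-s_{2}-N/q}\leq R^{-s_{1}-N/q}$, so $||u||_{M^{s_{2},q}}\leq||u||_{M^{s_{1},q}}$ (and similarly for the $M_{0}$ version). If $q_{1}\leq q_{2}$, H\"older's inequality gives $||u||_{q_{1},B_{R}}\leq|B_{R}|^{1/q_{1}-1/q_{2}}||u||_{q_{2},B_{R}}$, i.e. $|B_{R}|^{-1/q_{1}}||u||_{q_{1},B_{R}}\leq|B_{R}|^{-1/q_{2}}||u||_{q_{2},B_{R}}$; multiplying by $R^{-s}$ and taking supremum (resp.\ limit) produces the continuous embedding of $M^{s,q_{2}}$ into $M^{s,q_{1}}$ (resp.\ of $M_{0}^{s,q_{2}}$ into $M_{0}^{s,q_{1}}$).

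For part (iii), $M_{0}^{s,q}\subset M^{s,q}$ is obvious from the definitions. If $u_{n}\in M_{0}^{s,q}$ and $u_{n}\to u$ in $M^{s,q}$, the triangle inequality in $L^{q}(B_{R})$ yields
\begin{equation*}
R^{-s-N/q}||u||_{q,B_{R}}\leq||u-u_{n}||_{M^{s,q}}+R^{-s-N/q}||u_{n}||_{q,B_{R}}.
\end{equation*}
Given $\varepsilon>0$, pick $n$ with $||u-u_{n}||_{M^{s,q}}<\varepsilon/2$ and then $R_{0}$ large enough that $R^{-s-N/q}||u_{n}||_{q,B_{R}}<\varepsilon/2$ for $R\geq R_{0}$; thus $u\in M_{0}^{s,q}$. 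For part (iv), if $s_{1}<s_{2}$ and $u\in M^{s_{1},q}$, then
\begin{equation*}
R^{-s_{2}-N/q}||u||_{q,B_{R}}=R^{s_{1}-s_{2}}\,R^{-s_{1}-N/q}||u||_{q,B_{R}}\leq R^{s_{1}-s_{2}}||u||_{M^{s_{1},q}},
\end{equation*}
which tends to $0$ as $R\to\infty$ since $s_{1}-s_{2}<0$; the same estimate with $R=1$ gives continuity. The only step requiring any care is the completeness argument in (i), but it is a textbook repetition of the completeness proof for $L^{\infty}$-type norms; nothing else in the statement poses a genuine obstacle.
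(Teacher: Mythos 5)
The paper states that the proof of Theorem~\ref{th1} is "routine and left to the reader," so there is no proof in the paper to compare against. Your argument correctly supplies all the routine details — the completeness via $L^{q}_{loc}$-Cauchyness on each ball $B_{R}$ followed by passing to the limit in the defining inequality, H\"older's inequality for the $q$-comparison in (ii), the standard $\varepsilon/2$ argument for closedness in (iii), and the explicit factor $R^{s_{1}-s_{2}}\to 0$ for (iv) — and these are precisely the steps the authors implicitly intend. It is correct and essentially the canonical verification.
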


\begin{theorem}
\label{th2}If $s_{1},s_{2}\in \Bbb{R}$ and if $1\leq q_{1},q_{2}\leq \infty $
satisfy $1/q_{1}+1/q_{2}\leq 1,$ define $s_{3}\in \Bbb{R}$ and $q_{3}\geq 1$
by $s_{3}:=s_{1}+s_{2}$ and $1/q_{3}:=1/q_{1}+1/q_{2}.$ Then, the
multiplication $(u,v)\mapsto uv$ is defined and continuous from $%
M^{s_{1},q_{1}}\times M^{s_{2},q_{2}}$ to $M^{s_{3},q_{3}}$ and from $%
M_{0}^{s_{1},q_{1}}\times M^{s_{2},q_{2}}$ (or $M^{s_{1},q_{1}}\times %
M_{0}^{s_{2},q_{2}}$) to $M_{0}^{s_{3},q_{3}}.$ More precisely, 
\begin{equation*}
||uv||_{M^{s_{3},q_{3}}}\leq ||u||_{M^{s_{1},q_{1}}}||v||_{M^{s_{2},q_{2}}}.
\end{equation*}
\end{theorem}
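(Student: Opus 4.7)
The proof is essentially a two-line application of the generalized Hölder inequality, packaged so as to split the supremum (and the limit) over $R$ into a product. The plan has the following steps.

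First, I would fix $u\in M^{s_1,q_1}$ and $v\in M^{s_2,q_2}$. For every $R>0$, the generalized Hölder inequality in the form $\|uv\|_{q_3,B_R}\le \|u\|_{q_1,B_R}\,\|v\|_{q_2,B_R}$ holds, since $1/q_3=1/q_1+1/q_2$; this also gives $uv\in L^{q_3}_{loc}$, so that the $M^{s_3,q_3}$-norm is defined. (The extreme cases $q_1=\infty$ or $q_2=\infty$ are included.)

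Second, since $s_3=s_1+s_2$ and $N/q_3=N/q_1+N/q_2$, I can factor
\begin{equation*}
R^{-s_3-N/q_3}\|uv\|_{q_3,B_R}\le \bigl(R^{-s_1-N/q_1}\|u\|_{q_1,B_R}\bigr)\bigl(R^{-s_2-N/q_2}\|v\|_{q_2,B_R}\bigr).
\end{equation*}
Taking the supremum over $R\ge 1$ and using that $\sup(fg)\le (\sup f)(\sup g)$ for nonnegative quantities then yields the norm inequality $\|uv\|_{M^{s_3,q_3}}\le \|u\|_{M^{s_1,q_1}}\|v\|_{M^{s_2,q_2}}$, which implies that the bilinear map $(u,v)\mapsto uv$ is continuous.

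Third, for the $M_0$ statement, I would assume $u\in M_0^{s_1,q_1}$ and $v\in M^{s_2,q_2}$ (the other case is symmetric). The second factor on the right-hand side of the inequality above is bounded by $\|v\|_{M^{s_2,q_2}}$ for all $R\ge 1$, while the first factor tends to $0$ as $R\to\infty$ by definition of $M_0^{s_1,q_1}$. Hence $R^{-s_3-N/q_3}\|uv\|_{q_3,B_R}\to 0$, i.e.\ $uv\in M_0^{s_3,q_3}$, and continuity from $M_0^{s_1,q_1}\times M^{s_2,q_2}$ into $M_0^{s_3,q_3}$ follows from the same estimate.

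There is no real obstacle here; the only point requiring a moment of care is that the defining suprema in \eqref{8} are taken over $R\ge 1$ rather than all $R>0$, but this is harmless since the factorization used at each fixed $R$ is pointwise in $R$, so restricting to $R\ge 1$ on both sides is automatic.
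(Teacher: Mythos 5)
Your proof is correct, and it is precisely the "routine" argument the paper alludes to when it leaves Theorems \ref{th1} and \ref{th2} to the reader: generalized H\"older on each $B_R$, factor the weight $R^{-s_3-N/q_3}=R^{-s_1-N/q_1}\cdot R^{-s_2-N/q_2}$, take suprema (for $M^{s,q}$) or pass to the limit with one bounded and one vanishing factor (for $M_0^{s,q}$). Nothing more is needed.
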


The above inequality generalizes H\"{o}lder's inequality, which is recovered
when $q_{1}=q,q_{2}=q^{\prime },$ $s_{1}=-N/q$ and $s_{2}=-N/q^{\prime }.$

\begin{example}
\label{ex4}If $a\geq 0,$ the function $(1+|x|)^{a}$ is in $M^{a,\infty }.$
Thus, if $(1+|x|)^{-a}u\in L^{q}=M^{-N/q,q},$ then $u\in M^{a-N/q,q}$ and $%
||u||_{M^{a-N/q,q}}\leq ||((1+|x|)^{a}||_{M^{a,\infty }}||(1+|x|)^{-a}u||_{q}
$ by Theorem \ref{th2}.
\end{example}

The definition of the spaces $M^{s,q}$ hints that they should be related to
weighted Lebesgue spaces with weights behaving like $|x|^{-sq-N}$ for large $%
|x|.$ To make this connection precise, we introduce the spaces 
\begin{equation*}
L_{s}^{q}:=\{u:(1+|x|)^{-s-N/q}u\in L^{q}\},
\end{equation*}
equipped with the obvious norm. This definition makes sense if $s\in \Bbb{R}$
and $1\leq q\leq \infty $ and $L_{s}^{q}=L^{q}(\Bbb{R}^{N};(1+|x|)^{-sq-N}dx)
$ when $q<\infty .$ The only motivation for introducing the spaces $L_{s}^{q}
$ is the proof of Theorem \ref{th4} (i) later. They will not be used beyond
that point, but they play a key role in other issues (\cite{Ra16}).

\begin{theorem}
\label{th3}(i) $M^{s,\infty }=L_{s}^{\infty }$ for every $s\geq 0,$ with
equivalent norms. \newline
(ii) If $s>0,$ then $u\in M_{0}^{s,\infty }$ if and only if $u\in L_{loc}^{%
\infty }$ and, for every $\varepsilon >0,$ there is $R_{\varepsilon }>0$
such that $|u(x)|<\varepsilon |x|^{s}$ for a.e. $x$ with $|x|>R_{\varepsilon
}$ (i.e., $|u(x)|=o(|x|^{s})$ at infinity after modifying $u$ on a null set).%
\newline
(iii) If $1\leq q<\infty ,$ then $L_{s}^{q}\hookrightarrow
M^{s,q}\hookrightarrow L_{t}^{q}$ for every $t>s\geq -N/q$ and $%
L_{s}^{q}\hookrightarrow M_{0}^{s,q}$ if $s>-N/q.$
\end{theorem}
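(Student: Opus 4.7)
For (i), the embedding $L_s^\infty\hookrightarrow M^{s,\infty}$ is immediate from $(1+R)^s\le 2^sR^s$ for $R\ge 1$, which uses $s\ge 0$ and yields $||u||_{\infty,B_R}\le 2^s||u||_{L_s^\infty}R^s$. For the reverse, I would split pointwise: for a.e.\ $x$ with $|x|\le 1$ use $|u(x)|\le||u||_{\infty,B_1}\le||u||_{M^{s,\infty}}$, and for $|x|>1$ apply the $M^{s,\infty}$-norm estimate with $R=2|x|\ge 1$ to get $|u(x)|\le 2^s||u||_{M^{s,\infty}}|x|^s$. These combine to give $(1+|x|)^{-s}u\in L^\infty$ with the stated norm control.

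For (ii), the nontrivial direction is $u\in M_0^{s,\infty}\Rightarrow$ pointwise decay. Given $\varepsilon>0$, pick $R_0\ge 1$ with $||u||_{\infty,B_R}<2^{-s}\varepsilon R^s$ for all $R\ge R_0$. The one mild subtlety is that each such inequality holds only outside a null set $N_R$ depending on $R$, so I would restrict $R$ to the rationals $\ge R_0$; the countable union $\bigcup N_R$ is still null, and for a.e.\ $x$ with $|x|>R_0$ one selects a rational $R\in(|x|,2|x|)$ to conclude $|u(x)|<\varepsilon|x|^s$. The converse is a one-line computation: the hypothesis yields $||u||_{\infty,B_R}\le\max(||u||_{\infty,B_{R_\varepsilon}},\varepsilon R^s)$ for $R>R_\varepsilon$, so $\limsup R^{-s}||u||_{\infty,B_R}\le\varepsilon$ (here $s>0$ is exactly what kills the fixed constant).

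For (iii), $L_s^q\hookrightarrow M^{s,q}$ follows from the pointwise bound $(1+|x|)^{sq+N}\le CR^{sq+N}$ on $B_R$ with $R\ge 1$, which integrates to $||u||_{q,B_R}^q\le CR^{sq+N}||u||_{L_s^q}^q$. To upgrade to $L_s^q\hookrightarrow M_0^{s,q}$ when $s>-N/q$ (so $sq+N>0$), I would split $B_R=B_{R_0}\cup(B_R\setminus B_{R_0})$ and choose $R_0$ so that the weighted $L^q$-integral of $|u|^q$ over $\{|x|>R_0\}$ is below $\varepsilon^q$; the inside piece is a fixed constant, hence $o(R^{sq+N})$, while the outside piece is at most $C\varepsilon^q R^{sq+N}$. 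For $M^{s,q}\hookrightarrow L_t^q$ with $t>s$, I would decompose $\Bbb{R}^N=B_1\cup\bigcup_{n\ge 0}(B_{2^{n+1}}\setminus B_{2^n})$; on the $n$th annulus the weight $(1+|x|)^{-tq-N}$ is at most $C\,2^{-n(tq+N)}$ (with $tq+N>0$ from $t>s\ge-N/q$), and the $M^{s,q}$ estimate gives $\int_{B_{2^{n+1}}}|u|^q\le C\,2^{n(sq+N)}||u||_{M^{s,q}}^q$. The product is $C\,2^{n(s-t)q}||u||_{M^{s,q}}^q$, summing to a convergent geometric series precisely because $t>s$; the $B_1$ contribution is bounded directly by $||u||_{M^{s,q}}^q$.

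The only genuine subtlety is the essential-sup-to-pointwise passage in (ii); the remainder is bookkeeping of exponents, in which the strict inequalities $t>s$ and $s>-N/q$ play exactly the role of making the relevant geometric series convergent and the $M_0^{s,q}$ argument go through.
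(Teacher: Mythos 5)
Your proposal is correct. Parts (i) and (ii) follow essentially the paper's route: the reverse inclusion $M^{s,\infty}\subset L_s^{\infty}$ and the necessity in (ii) both come down to comparing $|u(x)|$ with $\|u\|_{\infty,B_R}$ for an $R$ comparable to $|x|$, and you rightly flag the only real subtlety, namely that the exceptional null set depends on $R$, so one must restrict to countably many radii. (Note that the same subtlety is present in your part (i), where you take $R=2|x|$ for each $x$; the paper avoids it there by using the integer radii $R=n+1$ on the annuli $B_{n+1}\setminus B_n$, and your rational-radii fix from (ii) works verbatim.) In part (iii) you genuinely diverge from the paper in both embeddings, in each case with a somewhat cleaner argument. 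For $M^{s,q}\hookrightarrow L_t^{q}$ the paper decomposes into unit annuli $B_j\setminus B_{j-1}$ and performs an Abel summation to convert $\sum_j j^{-tq-N}\int_{B_j\setminus B_{j-1}}|u|^q$ into a sum over the full balls $B_{j-1}$, which is then controlled by $\sum_j j^{-(t-s)q-1}<\infty$; your dyadic decomposition bounds each annular term directly by $C\,2^{n(s-t)q}\|u\|_{M^{s,q}}^{q}$ and sums a geometric series, avoiding the summation by parts entirely (the exponent checks $tq+N>0$ and $sq+N\ge 0$ both follow from $t>s\ge -N/q$, as you use). For $L_s^{q}\hookrightarrow M_0^{s,q}$ the paper approximates $(1+|x|)^{-s-N/q}u$ by a compactly supported smooth function in $L^q$; your tail-splitting via absolute continuity of the integral achieves the same estimate without invoking density of $\mathcal{C}_0^{\infty}$. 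Both of your alternatives are standard and fully rigorous; what they buy is mainly economy of means, at no loss of the explicit norm bounds.
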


\begin{proof}
(i) $L_{s}^{\infty }\hookrightarrow M^{s,\infty }$ by Example \ref{ex4} with 
$a=s\geq 0$ and $q=\infty .$ To prove $M^{s,\infty }=L_{s}^{\infty }$ with
equivalent norms, it suffices to show that $M^{s,\infty }\subset
L_{s}^{\infty },$ for then the equivalence of norms follows from the inverse
mapping theorem since both $M^{s,\infty }$ and $L_{s}^{\infty }$ are Banach
spaces.

Let then $u\in M^{s,\infty }$ be given and let $n\in \Bbb{N}.$ For a.e. $%
x\in B_{n+1}\backslash B_{n},$ we have $|u(x)|\leq ||u||_{\infty
,B_{n+1}}\leq C(n+1)^{s}$ where $C>0$ is independent of $x$ and $n$ and so $%
(1+|x|)^{-s}|u(x)|\leq |x|^{-s}|u(x)|\leq Cn^{-s}(n+1)^{s}\leq 2^{s}C.$
Thus, $(1+|x|)^{-s}|u(x)|\leq 2^{s}C$ for a.e. $x\in \Bbb{R}^{N}\backslash
B_{1}.$ Since $u\in L_{loc}^{\infty },$ it follows that $(1+|x|)^{-s}u\in
L^{\infty },$ i.e., $u\in L_{s}^{\infty }.$

(ii) The sufficiency is straightforward: Given $\varepsilon >0,$ let $%
R_{\varepsilon }>0$ be such that $|u(x)|<\varepsilon |x|^{s}$ for a.e. $x$
with $|x|>R_{\varepsilon }.$ If $R>R_{\varepsilon },$ then $||u||_{\infty
,B_{R}}<||u||_{\infty ,B_{R_{\varepsilon }}}+\varepsilon R^{s},$ so that $%
R^{-s}||u||_{\infty ,B_{R}}<2\varepsilon $ if $R$ is large enough since $%
s>0. $

Conversely, if $u\in M_{0}^{s,\infty }$ and $\varepsilon >0,$ then $%
||u||_{\infty ,B_{R}}\leq 2^{-s}\varepsilon R^{s}$ for $R$ large enough. In
particular, $|u(x)|\leq 2^{-s}\varepsilon (n+1)^{s}$ for a.e. $x\in
B_{n+1}\backslash B_{n}$ and $n\in \Bbb{N}$ large enough, say $n\geq
n_{\varepsilon },$ and then $|x|^{-s}|u(x)|\leq 2^{-s}\varepsilon
n^{-s}(n+1)^{s}\leq \varepsilon .$ Thus, $|u(x)|\leq \varepsilon |x|^{s}$
for a.e. $x$ with $|x|>R_{\varepsilon }:=n_{\varepsilon }.$

(iii) $L_{s}^{q}\hookrightarrow M^{s,q}$ by Example \ref{ex4} with $%
a=s+N/q\geq 0.$ The proof that $M^{s,q}\hookrightarrow L_{t}^{q}$ when $%
t>s\geq -N/q $ is more delicate. Let $u\in M^{s,q}$ be given. By using $%
B_{n}=\cup _{j=1}^{n}(B_{j}\backslash B_{j-1})$ (with $B_{0}=\emptyset $)
and since $(1+|x|)^{-tq-N}\leq j^{-tq-N}$ for $x\in B_{j}\backslash B_{j-1},$
we get 
\begin{equation*}
\left. 
\begin{array}{r}
\int_{B_{n}}(1+|x|)^{-tq-N}|u|^{q}\leq
\sum_{j=1}^{n}j^{-tq-N}\int_{B_{j}\backslash B_{j-1}}|u|^{q}\qquad \qquad
\qquad \qquad \\ 
=n^{-tq-N}\int_{B_{n}}|u|^{q}+\sum_{j=2}^{n}\left(
(j-1)^{-tq-N}-j^{-tq-N}\right) \int_{B_{j-1}}|u|^{q}.
\end{array}
\right.
\end{equation*}
In the right-hand side, $n^{-tq-N}\int_{B_{n}}|u|^{q}\leq
n^{-(t-s)q}||u||_{M^{s,q}}^{q}$ tends to $0$ as $n\rightarrow \infty $ since 
$t>s.$ Thus, to prove that $u\in L_{t}^{q},$ it suffices to show that the
sum $\sum_{j=2}^{n}\left( (j-1)^{-tq-N}-j^{-tq-N}\right)
\int_{B_{j-1}}|u|^{q}$ is uniformly bounded.

Note that $(j-1)^{-tq-N}-j^{-tq-N}=(j-1)^{-tq-N}(1-(1-j^{-1})^{tq+N})$ and
that $1-(1-j^{-1})^{tq+N}=O(j^{-1})=O((j-1)^{-1})$ for $j\geq 2.$ Thus, $%
(j-1)^{-tq-N}-j^{-tq-N}\leq C(j-1)^{-tq-N-1}$ where $C>0$ is independent of $%
n$ (and of $u$) and so, 
\begin{equation*}
\left. 
\begin{array}{r}
0\leq \sum_{j=2}^{n}\left( (j-1)^{-tq-N}-j^{-tq-N}\right)
\int_{B_{j-1}}|u|^{q}\qquad \qquad \qquad \qquad \qquad \qquad \qquad \qquad
\\ 
\leq
C\sum_{j=2}^{n}(j-1)^{-tq-N-1}\int_{B_{j-1}}|u|^{q}=C%
\sum_{j=1}^{n-1}j^{-(t-s)q-1}j^{-sq-N}\int_{B_{j}}|u|^{q}\qquad \qquad \\ 
\leq C\left( \sum_{j=1}^{n-1}j^{-(t-s)q-1}\right) ||u||_{M^{s,q}}^{q}
\end{array}
\right.
\end{equation*}
and the right-hand side is bounded since $\sum_{j=1}^{\infty
}j^{-(t-s)q-1}<\infty .$ It follows that $u\in L_{t}^{q}$ and, in fact, that 
$||u||_{L_{t}^{q}}^{q}\leq C\left( \sum_{j=1}^{\infty }j^{-(t-s)q-1}\right)
||u||_{M^{s,q}}^{q},$ so that the embedding $M^{s,q}\subset L_{t}^{q}$ is
continuous.

To complete the proof, we now assume $s>-N/q$ and show that $%
L_{s}^{q}\hookrightarrow M_{0}^{s,q}.$ Since $L_{s}^{q}\hookrightarrow
M^{s,q}$ was proved above and $M_{0}^{s,q}\subset M^{s,q},$ it suffices to
show that $L_{s}^{q}\subset M_{0}^{s,q}.$ Let $u\in L_{s}^{q}$ and $%
\varepsilon >0$ be given. Write $u=(1+|x|)^{s+N/q}v$ with $%
v:=(1+|x|)^{-s-N/q}u\in L^{q}$ and choose $\varphi \in \mathcal{C}
_{0}^{\infty }$ such that $||v-\varphi ||_{q}<\varepsilon .$ This yields $%
R^{-s-N/q}||u||_{q,B_{R}}\leq R^{-s-N/q}(1+R)^{s+N/q}\varepsilon
+R^{-s-N/q}||(1+|x|)^{s+N/q}\varphi ||_{q,B_{R}}.$ Now, $||(1+|x|)^{s+N/q}%
\varphi ||_{q,B_{R}}=||(1+|x|)^{s+N/q}\varphi ||_{q,B_{R_{0}}}$ if $R\geq
R_{0}$ and $R_{0}$ is chosen so that $\limfunc{Supp}\varphi \subset
B_{R_{0}}.$ Since $s>-N/q,$ it follows that $\lim \sup_{R\rightarrow \infty
}R^{-s-N/q}||u||_{q,B_{R}}\leq \varepsilon $ and so $u\in M_{0}^{s,q}$ since 
$\varepsilon >0$ is arbitrary.
\end{proof}

\begin{remark}
\label{rm1}If $s<-N/q\leq \widetilde{s},$ then $L_{s}^{q}\hookrightarrow L_{%
\widetilde{s}}^{q}$ and Theorem \ref{th3} is applicable with $s$ replaced
with $\widetilde{s}.$
\end{remark}

Although Theorem \ref{th3} suggests that the gap between the spaces $M^{s,q}$
and $L_{s}^{q}$ is negligible when $s\geq -N/q,$ this gap allows for major
differences. Most notably, the spaces $L_{s}^{q}$ are \emph{not} ordered by
inclusion when $s$ is fixed and $q$ is varied. For example, $%
(1+|x|)^{-N/q^{\prime }}\notin L^{q^{\prime }}$ if $q>1,$ so that there is $%
v\in L^{q}$ such that $(1+|x|)^{-N/q^{\prime }}v\notin L^{1}$ (indeed, if $f$
is a function on $\Bbb{R}^{N}$ and $fv\in L^{1}$ for every $v\in L^{q},$
then $f\in L^{q^{\prime }};$ use truncation and the uniform boundedness
principle.) As a result, $u:=(1+|x|)^{s+N/q}v\in L_{s}^{q}$ but $u\notin
L_{s}^{1}.$ An elaboration on this example shows that $L_{s}^{q_{2}}\not
\subset L_{s}^{q_{1}}$ if $q_{1}<q_{2}$ (or $q_{1}>q_{2},$ which is
trivial). For that reason, it will be technically essential to use the $%
M^{s,q}$ and $M_{0}^{s,q}$ scales than the $L_{s}^{q}$ scale, even though,
by Theorem \ref{th3}, they often end up being interchangeable.

\begin{theorem}
\label{th4} (i) $M^{s,q}\hookrightarrow \mathcal{S}^{\prime }.$ \newline
(ii) If $u$ is a polynomial, then $u\in M^{s,q}$ ($M_{0}^{s,q}$) if and only
if $\deg u\leq s$ ($\deg u<s$).\newline
\end{theorem}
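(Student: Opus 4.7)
For part (i), the plan is to show that any $u\in M^{s,q}$ pairs continuously against Schwartz functions with constants controlled by $\|u\|_{M^{s,q}}$ and a single Schwartz seminorm. I will chop the domain into the ball $B_{1}$ together with the concentric annuli $A_{k}:=B_{k+1}\setminus B_{k}$, $k\geq 1$, and apply H\"older's inequality on each piece:
\begin{equation*}
\int_{A_{k}}|u\varphi |\leq \|u\|_{q,B_{k+1}}\|\varphi \|_{q^{\prime },A_{k}}\leq (k+1)^{s+N/q}\|u\|_{M^{s,q}}\|\varphi \|_{q^{\prime },A_{k}},
\end{equation*}
where the second inequality uses the definition (\ref{8}). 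Because $\varphi \in \mathcal{S}$, for every $M$ there is a Schwartz seminorm $\nu _{M}(\varphi )$ with $|\varphi (x)|\leq \nu _{M}(\varphi )(1+|x|)^{-M}$, so $\|\varphi \|_{q^{\prime },A_{k}}\leq C\nu _{M}(\varphi )(1+k)^{-M+N/q^{\prime }}$ (with the usual convention if $q^{\prime }=\infty $). Choosing $M$ large enough that $\sum _{k}(k+1)^{s+N-M}<\infty $ yields $|\int u\varphi |\leq C\|u\|_{M^{s,q}}\nu _{M}(\varphi )$, which gives both the well-definedness of $u$ as an element of $\mathcal{S}^{\prime }$ and continuity of the embedding. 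Injectivity is automatic, since if $\int u\varphi =0$ for all $\varphi \in \mathcal{S}$ then in particular for all $\varphi \in \mathcal{C}_{0}^{\infty }$, forcing $u=0$ in $L_{loc}^{q}$. The cases $s<-N/q$ (where $M^{s,q}=\{0\}$) are trivial.

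For part (ii), the plan is a rescaling/asymptotic calculation. Write $u=\sum_{j=0}^{d}p_{j}$ with $p_{j}$ homogeneous of degree $j$ and $p_{d}\not\equiv 0$, where $d:=\deg u$. The change of variables $x=Ry$ gives
\begin{equation*}
\|u\|_{q,B_{R}}^{q}=R^{N}\int_{B_{1}}\Bigl|\sum_{j=0}^{d}R^{j}p_{j}(y)\Bigr|^{q}dy,
\end{equation*}
so that $R^{-d}u(Ry)\to p_{d}(y)$ uniformly on $\overline{B_{1}}$ as $R\to \infty$. Dominated convergence then gives $R^{-d-N/q}\|u\|_{q,B_{R}}\to \|p_{d}\|_{q,B_{1}}>0$ (with the obvious reading when $q=\infty $, replacing the integral by a supremum). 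Writing $R^{-s-N/q}\|u\|_{q,B_{R}}=R^{d-s}\cdot R^{-d-N/q}\|u\|_{q,B_{R}}$, the right-hand side is bounded in $R\geq 1$ if and only if $d\leq s$ and tends to $0$ if and only if $d<s$, yielding the two claimed equivalences.

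The only subtle point is the rescaling step in (ii): one has to make sure that the limit $\|p_{d}\|_{q,B_{1}}$ is strictly positive, which follows from $p_{d}\not\equiv 0$ together with the fact that a nonzero polynomial does not vanish on a set of positive measure. For (i), the only mildly delicate matter is threading the same seminorm $\nu _{M}$ through the estimate, which is handled by first fixing $M>s+N+1$ (depending on $s,N,q$ only) and then using this single $M$ uniformly in $u$ and $\varphi $.
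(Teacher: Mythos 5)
Both parts are correct, and both take genuinely different routes from the paper.

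For part (i), the paper exploits the scale: it first reduces to $q=1$ via $M^{s,q}\hookrightarrow M^{s,1}$, then applies Theorem~\ref{th3}~(iii) to land in $L_{t}^{1}$ for some $t>s$, and finally bounds $\left|\int u\varphi\right|$ by $\|u\|_{L_{t}^{1}}$ times the single seminorm $\sup_{x}(1+|x|)^{t+N}|\varphi(x)|$. Your argument skips the weighted spaces entirely and works directly from the definition of $M^{s,q}$: annular decomposition, H\"older on each annulus, Schwartz decay, then a geometric-type sum. Both routes produce a bound $\left|\int u\varphi\right|\leq C\|u\|_{M^{s,q}}\nu_{M}(\varphi)$ for a single fixed $M$, so continuity of the embedding is established either way; yours is more self-contained (no dependence on Theorem~\ref{th3}), while the paper's is shorter once that machinery is already in place. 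Your explicit remark about injectivity is a helpful addition that the paper leaves implicit.

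For part (ii), your approach diverges more substantially. The paper proves sufficiency trivially from $u\in M^{d,\infty}$ and then proves necessity by picking coordinates so that the top coefficient sits on $x_{1}^{d}$, constructing a narrow cone $\Sigma_{\varepsilon}$ about the positive $x_{1}$-axis, and bounding $|u|$ from below there to get a lower bound on $\int_{B_{R}}|u|$. You instead decompose $u$ into homogeneous parts $p_{0}+\cdots+p_{d}$, rescale to see $R^{-d}u(Ry)\to p_{d}(y)$ uniformly on $\overline{B_{1}}$, and conclude $R^{-d-N/q}\|u\|_{q,B_{R}}\to\|p_{d}\|_{q,B_{1}}>0$ in one line. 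This gives the exact asymptotic behavior of $R^{-s-N/q}\|u\|_{q,B_{R}}$ and thus both directions simultaneously (boundedness iff $d\leq s$, vanishing limit iff $d<s$), whereas the paper handles sufficiency and necessity separately. Your argument is shorter and, in my view, more transparent; it also avoids the measure-of-a-cone computation. The only point worth being explicit about — and you are — is that $\|p_{d}\|_{q,B_{1}}>0$ because a nonzero polynomial cannot vanish on a set of positive measure. One minor cosmetic note: uniform convergence on $\overline{B_{1}}$ already gives convergence of the $L^{q}(B_{1})$ norms, so the appeal to dominated convergence is slightly heavier than necessary, but it is correct.
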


\begin{proof}
(i) With no loss of generality, assume $s\geq -N/q.$ Since $%
M^{s,q}\hookrightarrow M^{s,1}$ by Theorem \ref{th1} (ii), it is not
restrictive to assume $q=1$ and then, by Theorem \ref{th3} (iii), it
suffices to check that $L_{t}^{1}\hookrightarrow \mathcal{S}^{\prime }$ for
every $t\in \Bbb{R}.$ Since $\sup_{x\in \Bbb{R}^{N}}(1+|x|)^{t+N}|\varphi
(x)|$ is a continuous seminorm on $\mathcal{S},$ this follows from 
\begin{equation*}
\left. 
\begin{array}{r}
\left| \int_{\Bbb{R}^{N}}u\varphi \right| \leq
||(1+|x|)^{-t-N}u||_{1}\sup_{x\in \Bbb{R}^{N}}(1+|x|)^{t+N}|\varphi
(x)|\qquad \qquad \qquad \\ 
=||u||_{L_{t}^{1}}\sup_{x\in \Bbb{R}^{N}}(1+|x|)^{t+N}|\varphi (x)|,
\end{array}
\right.
\end{equation*}
for every $u\in L_{t}^{1}$ and every $\varphi \in \mathcal{S}.$

(ii) Let $d:=\deg u.$ It is plain that $u\in M^{d,\infty }\subset
M^{d,q}\subset M^{s,q}$ for every $1\leq q\leq \infty $ and every $s\geq d.$
In particular, $u\in M_{0}^{s,q}$ if $s>d.$

Conversely, assume by contradiction that $u\in M^{s,q}$ for some $1\leq
q\leq \infty $ and some $s<d.$ Choose a system of coordinates such that $%
x=(x_{1},x^{\prime })$ with $x_{1}\in \Bbb{R}^{N},x^{\prime }\in \Bbb{R}
^{N-1}$ and that $u(x)=a_{d}x_{1}^{d}+\sum_{j=0}^{d-1}a_{j}(x^{\prime
})x_{1}^{j},$ where $a_{j}\in \mathcal{P}_{d-j}(\Bbb{R}^{N-1}),0\leq j\leq d$
and $a_{d}\in \Bbb{C}\backslash \{0\}.$

Given $\varepsilon >0,$ denote by $\Sigma _{\varepsilon }\subset \Bbb{R}^{N}$
the sector $|x^{\prime }|<\varepsilon x_{1}$ around the positive $x_{1}$%
-axis. For $0\leq j\leq d-1,$ there are constants $C_{j}\geq 0$ independent
of $\varepsilon $ and of $x\in \Sigma _{\varepsilon }$ such that $%
|a_{j}(x^{\prime })|\leq C_{j}(1+\varepsilon ^{d-j}x_{1}^{d-j})$ for every $%
x\in \Sigma _{\varepsilon }.$ Thus, if $\varepsilon $ is small enough and $%
R_{0}>0$ is large enough, $|u(x)|\geq (|a_{d}|/2)x_{1}^{d}$ for $x\in \Sigma
_{\varepsilon }$ with $|x|\geq R_{0}.$ Since $|x|\leq x_{1}(1+\varepsilon
^{2})^{1/2}$ when $x\in \Sigma _{\varepsilon },$ it follows that $|u(x)|\geq
(|a_{d}|/2)(1+\varepsilon ^{2})^{-d/2}|x|^{d}$ for every $x\in \Sigma
_{\varepsilon }$ with $|x|\geq R_{0}.$ As a result, if $R>R_{0},$%
\begin{equation*}
\left. 
\begin{array}{r}
\int_{\Sigma _{\varepsilon }\cap B_{R}}|u|\geq \frac{|a_{d}|}{2}
(1+\varepsilon ^{2})^{-d/2}\int_{\Sigma _{\varepsilon }\cap (B_{R}\backslash
B_{R_{0}})}|x|^{d}\qquad \qquad \qquad \qquad \\ 
=\frac{\omega _{\varepsilon }|a_{d}|}{2(d+N)}(1+\varepsilon
^{2})^{-d/2}(R^{d+N}-R_{0}^{d+N}),
\end{array}
\right.
\end{equation*}
where $\omega _{\varepsilon }:=|\Sigma _{\varepsilon }\cap B_{1}|/|B_{1}|>0$
is also the ratio of the $N-1$ dimensional measures of $\Sigma _{\varepsilon
}\cap \partial B_{1}$ and $\partial B_{1}.$ Thus, $R^{-s-N}||u||_{1,B_{R}}%
\geq R^{-s-N}\left( \int_{\Sigma _{\varepsilon }\cap B_{R}}|u|\right) \geq
cR^{d-s}\left( 1-R^{-d-N}R_{0}^{d+N}\right) $ where $c=\omega _{\varepsilon
}(|a_{d}|/2)(d+N)^{-1}(1+\varepsilon ^{2})^{-d/2}>0$ is independent of $R.$
Since $d>s,$ this contradicts the assumption $u\in M^{s,q}\subset M^{s,1}.$

If it is assumed that $u\in M_{0}^{s,q}\subset M_{0}^{s,1},$ a contradiction
still arises when $s=d.$ This completes the proof of (ii).
\end{proof}

As a corollary, we obtain an elementary Liouville-type property that will be
instrumental in the proof of Theorem \ref{th13} (and convenient, but not
essential, in that of Lemma \ref{lm10}). Although it will only be used here
with the elliptic operator $A$ in (\ref{1}), we give a more general
statement since the proof is the same. Recall that $\mathcal{P}$ is the
space of polynomials on $\Bbb{R}^{N}$ with complex coefficients.

\begin{corollary}
\label{cor5}Let $B:=\sum_{|\alpha |_{1}\leq m}i^{|\alpha |_{1}}b_{\alpha }%
\partial ^{\alpha }$ be a differential operator with constant coefficients
such that $B(\xi ):=\sum_{|\alpha |_{1}\leq m}b_{\alpha }\xi ^{\alpha }\neq 0
$ on $\Bbb{R}^{N}\backslash \{0\}.$ If $u\in M^{s,q}$ ($M_{0}^{s,q}$) for
some $1\leq q\leq \infty $ and $Bu\in \mathcal{P},$ then $u\in \mathcal{P}$
and $\deg u\leq s$ ($\deg u<s$).
\end{corollary}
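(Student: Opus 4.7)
\textbf{Proof plan for Corollary \ref{cor5}.} The natural strategy is to pass to the Fourier side, show that $\widehat{u}$ is supported at the origin, conclude that $u$ is a polynomial, and then invoke Theorem \ref{th4}~(ii) to obtain the degree bound.

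First, since $u\in M^{s,q}$, Theorem \ref{th4}~(i) gives $u\in\mathcal{S}'$, so $\widehat{u}\in\mathcal{S}'$ is well defined. The factor $i^{|\alpha|_1}$ built into the definition of $B$ is exactly what is needed so that $\mathcal{F}(Bu)=B(\xi)\widehat{u}$ in $\mathcal{S}'$ (with the standard convention relating $\partial^{\alpha}$ and multiplication by $\xi^{\alpha}$). By hypothesis, $Bu=Q$ for some $Q\in\mathcal{P}$, and $\widehat{Q}$ is a finite linear combination of derivatives of the Dirac mass at $0$; in particular $\mathrm{supp}\,\widehat{Q}\subset\{0\}$. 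Hence
\[
B(\xi)\,\widehat{u}(\xi)=\widehat{Q}(\xi)\qquad\text{in }\mathcal{S}',
\]
with the right-hand side supported at $\{0\}$.

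Next I would show that $\widehat{u}$ itself is supported at $\{0\}$. On the open set $\Omega:=\mathbb{R}^{N}\backslash\{0\}$ the hypothesis $B(\xi)\ne 0$ makes $1/B(\xi)$ a $\mathcal{C}^{\infty}$ function, so multiplication by $1/B(\xi)$ is a well defined operation on distributions in $\mathcal{D}'(\Omega)$. Restricting the identity above to $\Omega$, where $\widehat{Q}$ vanishes, yields
\[
\widehat{u}\big|_{\Omega}=\frac{1}{B(\xi)}\bigl(B(\xi)\widehat{u}\bigr)\big|_{\Omega}=\frac{1}{B(\xi)}\widehat{Q}\big|_{\Omega}=0.
\]
Thus $\mathrm{supp}\,\widehat{u}\subset\{0\}$, which by the standard structure theorem forces $\widehat{u}$ to be a finite linear combination of derivatives of $\delta_{0}$. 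Taking inverse Fourier transform, $u\in\mathcal{P}$.

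Finally, since $u$ is a polynomial and belongs to $M^{s,q}$ (respectively $M_{0}^{s,q}$), Theorem \ref{th4}~(ii) immediately gives $\deg u\le s$ (respectively $\deg u<s$), which is the conclusion.

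The only step requiring any care is the localization argument on $\Omega$: one must make sure that multiplying the tempered distribution $B(\xi)\widehat{u}$ by the smooth function $1/B(\xi)$ off the origin is legitimate. This is standard once one restricts to $\mathcal{D}'(\Omega)$, so there is no real obstacle; the crucial inputs are the Fourier-support argument and the previously established Theorem \ref{th4}.
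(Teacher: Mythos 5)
Your proof is correct and follows essentially the same route as the paper's: Theorem \ref{th4}~(i) to place $u$ in $\mathcal{S}'$, Fourier transform to get $B(\xi)\widehat{u}=\widehat{\pi}$ with $\limfunc{Supp}\widehat{\pi}\subset\{0\}$, the nonvanishing of $B(\xi)$ off the origin to force $\limfunc{Supp}\widehat{u}\subset\{0\}$, hence $u\in\mathcal{P}$, and then Theorem \ref{th4}~(ii) for the degree bound. The only difference is that you spell out the localization/division-by-$B(\xi)$ step on $\mathbb{R}^{N}\backslash\{0\}$, which the paper states more tersely; the substance is identical.
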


\begin{proof}
By Theorem \ref{th4} (i), $u\in \mathcal{S}^{\prime },$ so that $Bu=\pi $
with $\pi \in \mathcal{P}\subset \mathcal{S}^{\prime }$ implies $B(\xi )%
\widehat{u}=\widehat{\pi }.$ Now, $\limfunc{Supp}\widehat{\pi }\subset \{0\}$
since $\widehat{\pi }$ is a linear combination of partial derivatives of $%
\delta $ (Dirac delta). Since $B(\xi )\neq 0$ when $\xi \neq 0,$ it follows
that $\limfunc{Supp}\widehat{u}\subset \{0\}.$ Hence, $\widehat{u}$ is a
linear combination of partial derivatives of $\delta ,$ which amounts to
saying that $u$ is a polynomial. The bound on $\deg u$ follows from Theorem 
\ref{th4} (ii).
\end{proof}

Corollary \ref{cor5} is related in various ways to a number of results in
the literature. Among many others, we mention Agmon, Douglis and Nirenberg 
\cite[p. 662]{AgDoNi59} (when $B$ is homogeneous elliptic and $q=\infty $),
Weck \cite{We83} (when $Bu=0$ and $q=\infty $), H\"{o}rmander \cite{Ho73},
Murata \cite{Mu74} (when $Bu=0,s=0$ and $q=2$). The last two papers deal
with the solutions of $Bu=0$ when $B$ is a general operator with constant
coefficients and $u\in L_{loc}^{2}.$ They are much deeper but only cover the
special case of Corollary \ref{cor5} when $Bu=0,s\leq 0$ and $q\geq 2.$ For
polyharmonic functions, Liouville theorems stronger than Corollary \ref{cor5}
with $q=1,$ but in the same spirit, can be found in Armitage \cite{Ar01} and
the references therein.

\section{Embedding of $D^{k,p}$ into $M^{s,p}$\label{embedding}}

Arguably, the most important feature of the spaces $M^{s,q}$ is that a
function with first order derivatives in $M^{s,q}$ is in $M^{s+1,q}$ if $%
s>-1.$ This will be proved in this section (Theorem \ref{th7}). The examples
given in the Introduction made repeated use of this property. In addition,
the embedding $D^{k,p}\subset M^{s,p}$ for suitable values of $s$ is a
straightforward by-product (Theorem \ref{th8}). For brevity, we do not
discuss the embedding $D^{k,p}\subset M^{s,q}$ when $q\neq p,$ which will
not be needed.

We begin with a lemma on real-valued functions of one variable.

\begin{lemma}
\label{lm6}Let $H:(0,\infty )\rightarrow \Bbb{R}$ be a function bounded
above on every compact subset of $(0,\infty ).$ Suppose that there are $%
\lambda ,\mu \in (0,1)$ and $c\in \Bbb{R},R_{0}>0$ such that 
\begin{equation}
H(R)\leq \mu H(\lambda R)+c,\qquad \forall R\geq R_{0}.  \label{10}
\end{equation}
Then, $H$ is bounded above on $[1,\infty )$ and $\lim \sup_{R\rightarrow 
\infty }H(R)\leq c(1-\mu )^{-1}.$
\end{lemma}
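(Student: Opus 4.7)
The plan is to iterate the recursion (\ref{10}) downward in its argument until the shrunken point $\lambda^n R$ lands in a fixed compact subset of $(0,\infty)$, where the a priori upper bound on $H$ can be invoked.

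First I would prove by induction on $n$ that, whenever $R \geq R_0$ and $\lambda^{n-1} R \geq R_0$,
\[
H(R) \leq \mu^n H(\lambda^n R) + c \cdot \frac{1 - \mu^n}{1 - \mu};
\]
this is a one-line induction using (\ref{10}) at the point $\lambda^{n-1} R$, the fact that $\mu > 0$ preserves inequalities under multiplication, and the geometric-series identity.

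Given $R \geq R_0$, I would then choose $n = n(R) \in \mathbb{N}$ to be the unique integer with $\lambda^n R \in [\lambda R_0, R_0]$; explicitly, $n$ is of order $\log(R/R_0)/\log(1/\lambda)$, so $\lambda^{n-1} R \geq R_0$ and $n \to \infty$ as $R \to \infty$. Setting $M := \sup_{t \in [\lambda R_0, R_0]} H(t)$, which is finite by hypothesis since the interval is a compact subset of $(0,\infty)$, the iterated inequality gives
\[
H(R) \leq \mu^n M + c \cdot \frac{1 - \mu^n}{1 - \mu}.
\]
Because $\mu^n \in (0,1)$ and $M$ is a fixed real number, the right-hand side is uniformly bounded in $R \geq R_0$; combined with the a priori bound of $H$ on the compact set $[1, R_0]$, this yields boundedness above on $[1, \infty)$. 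Finally, as $R \to \infty$ one has $n \to \infty$ and $\mu^n \to 0$, so the same display gives $\limsup_{R \to \infty} H(R) \leq c/(1-\mu)$.

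I do not anticipate a genuine obstacle: the only point needing care is to stop the iteration at just the right depth so that $\lambda^n R$ remains simultaneously $\geq \lambda R_0$ and $\leq R_0$, since only then does the hypothesized upper bound on compact subsets of $(0,\infty)$ apply uniformly as $R$ varies.
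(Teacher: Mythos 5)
Your proof is correct and is essentially the paper's argument run forward: both iterate (\ref{10}) until the argument $\lambda^n R$ falls into the fixed compact window $[\lambda R_0, R_0]$ (or $[R_0,\lambda^{-1}R_0]$ in the paper) where the a priori bound applies. The paper merely phrases the same iteration as a proof by contradiction, whereas your direct version yields the explicit bound $H(R)\leq \mu^{n}M+c(1-\mu^{n})(1-\mu)^{-1}$; no substantive difference.
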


\begin{proof}
Since $H$ is bounded above on the compact subsets of $(0,\infty ),$ it
suffices to show that if $\varepsilon >0,$ then $H(R)\leq \varepsilon
+c(1-\mu )^{-1}$ for $R>0$ large enough. By contradiction, if this is false,
there is a sequence $R_{n}\rightarrow \infty $ such that $\varepsilon
+c(1-\mu )^{-1}<H(R_{n})$ for every $n\in \Bbb{N}.$ With no loss of
generality, assume $R_{n}\geq R_{0}$ and let $j_{n}\in \Bbb{N}$ denote the
largest integer $j$ such that $\lambda ^{j}R_{n}\geq R_{0},$ so that $%
R_{0}\leq \lambda ^{j_{n}}R_{n}<\lambda ^{-1}R_{0}.$ Evidently, $%
j_{n}\rightarrow \infty .$ On the other hand, since $R_{n}\geq R_{0},$ it
follows from (\ref{10}) that $H(R_{n})\leq \mu H(\lambda R_{n})+c$ and so $%
\varepsilon +c(1-\mu )^{-1}<\mu H(\lambda R_{n})+c.$ As a result, $\mu
^{-1}\varepsilon +c(1-\mu )^{-1}<H(\lambda R_{n}).$ If $\lambda R_{n}\geq
R_{0},$ then (\ref{10}) can be used again with $R$ replaced with $\lambda
R_{n},$ which yields $\mu ^{-2}\varepsilon +c(1-\mu )^{-1}<H(\lambda
^{2}R_{n})$ and so, by induction, $\mu ^{-j_{n}}\varepsilon +c(1-\mu
)^{-1}<H(\lambda ^{j_{n}}R_{n}) $ for every $n.$ Since $\lim \mu
^{-j_{n}}=\infty $ and $\lambda ^{j_{n}}R_{n}\in [R_{0},\lambda
^{-1}R_{0})\subset [R_{0},\lambda ^{-1}R_{0}],$ this contradicts the
assumption that $H$ is bounded above on the compact subsets of $(0,\infty ).$
\end{proof}

\begin{theorem}
\label{th7}Let $k\in \Bbb{N}_{0}$ and let $s\in (-1,\infty )$ and $1\leq q%
\leq \infty $ be given. If $u\in \mathcal{D}^{\prime }$ and $\partial %
^{\beta }u\in M^{s,q}$ ($M_{0}^{s,q}$) when $|\beta |_{1}=k,$ then $u\in
M^{s+k,q}$($M_{0}^{s+k,q}$). If, in addition, $k\geq 1,s=0,q=\infty $ and $%
\lim_{|x|\rightarrow \infty }|\nabla ^{k}u(x)|=0,$ then $u\in M_{0}^{k,%
\infty }$ (since $M_{0}^{0,\infty }=\{0\},$ this is false if $k=0$).
\end{theorem}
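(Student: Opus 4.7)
The plan is to reduce to the case $k=1$ by induction on $k$, then prove the core $k=1$ case via an iterated Poincar\'e bootstrap controlled by Lemma~\ref{lm6}.

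\emph{Reduction to $k=1$.} I would induct on $k$. The case $k=0$ is vacuous. Assuming the result for $k-1$ with any $s>-1$, suppose $\partial^{\beta}u\in M^{s,q}$ for $|\beta|_{1}=k\geq 1$. Since the first-order derivatives $v_{i}:=\partial_{i}u$ have all their order-$(k-1)$ derivatives in $M^{s,q}\subset L_{loc}^{q}$, Schwartz's theorem (cited in the paper) gives $v_{i}\in L_{loc}^{q}$, so the induction hypothesis applies and yields $v_{i}\in M^{s+k-1,q}$ (noting $s+k-1\geq s>-1$). A second application of Schwartz places $u\in L_{loc}^{q}$, and the $k=1$ case then gives $u\in M^{s+k,q}$. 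The $M_{0}$ version is identical.

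\emph{The core case $k=1$.} For $u\in\mathcal{D}^{\prime}$ with $\nabla u\in (M^{s,q})^{N}$, use the standard Poincar\'e inequality on balls,
$$\|u-u_{B_{R}}\|_{q,B_{R}}\leq CR\,\|\nabla u\|_{q,B_{R}},\qquad u_{B_{R}}:=|B_{R}|^{-1}\int_{B_{R}}u,$$
valid for every $q\in[1,\infty]$ with $C$ independent of $R$. Fix $\lambda\in(0,1)$. Comparing $u_{B_{R}}$ and $u_{B_{\lambda R}}$ by H\"older's inequality gives $|u_{B_{R}}-u_{B_{\lambda R}}|\leq |B_{\lambda R}|^{-1/q}\|u-u_{B_{R}}\|_{q,B_{R}}$, whence the triangle inequality produces
$$\|u\|_{q,B_{R}}\leq C(1+\lambda^{-N/q})R\,\|\nabla u\|_{q,B_{R}}+\lambda^{-N/q}\|u\|_{q,B_{\lambda R}}.$$
Setting $H(R):=R^{-s-1-N/q}\|u\|_{q,B_{R}}$ and using $\nabla u\in M^{s,q}$ to bound $R^{-s-N/q}\|\nabla u\|_{q,B_{R}}$ uniformly, division by $R^{s+1+N/q}$ yields
$$H(R)\leq c+\lambda^{s+1}H(\lambda R),\qquad R\geq 1,$$
with $c$ independent of $R$. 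Because $s>-1$, the contraction factor $\mu:=\lambda^{s+1}$ lies in $(0,1)$; since $H$ is bounded on compact subsets of $(0,\infty)$ (as $u\in L_{loc}^{q}$), Lemma~\ref{lm6} gives $H$ bounded on $[1,\infty)$, i.e., $u\in M^{s+1,q}$. For the $M_{0}$ variant, $R^{-s-N/q}\|\nabla u\|_{q,B_{R}}$ is eventually smaller than any prescribed $\varepsilon$, so the same inequality holds with $c$ replaced by $C^{\prime}\varepsilon$ for $R$ beyond some $R_{\varepsilon}$; Lemma~\ref{lm6} then forces $\limsup H\leq C^{\prime}\varepsilon/(1-\mu)$, and letting $\varepsilon\to 0$ gives $u\in M_{0}^{s+1,q}$.

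\emph{The refined claim.} For $k=1$, $s=0$, $q=\infty$ with $\nabla u(x)\to 0$: after modifying $u$ on a null set so it is (locally Lipschitz and hence) continuous, choose $R_{0}$ with $|\nabla u(x)|<\varepsilon$ for $|x|>R_{0}$; integrating $\nabla u$ along the radial segment from $R_{0}x/|x|$ to $x$ yields $|u(x)|\leq M+\varepsilon |x|$ for $|x|>R_{0}$, whence $R^{-1}\|u\|_{\infty,B_{R}}\to 0$. For $k\geq 2$, apply this to each order-$(k-1)$ partial $\partial^{\alpha}u$ to place it in $M_{0}^{1,\infty}$, then invoke the main theorem with order $k-1$ and exponent $s=1>-1$ to conclude $u\in M_{0}^{k,\infty}$. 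The main obstacle is arranging the Poincar\'e comparison so that the contraction ratio $\lambda^{s+1}$ is strictly below $1$; this is exactly where the hypothesis $s>-1$ is indispensable, and where one can see why a direct extension below that threshold would require controlling a nontrivial polynomial part of $u$.
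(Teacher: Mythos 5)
Your proof is correct and reaches the same recursive inequality $H(R)\leq\mu H(\lambda R)+c$ fed into Lemma~\ref{lm6}, but the route to that inequality is genuinely different. The paper never invokes the zero-mean Poincar\'e inequality; instead it proves, by integration along rays and for smooth $u$ (extended to $W^{1,q}(B_R)$ by density when $q<\infty$), the bespoke estimate
\begin{equation*}
\|u\|_{q,B_{R}}\leq 2\lambda^{-N/q}\|u\|_{q,B_{\lambda R}}+2\lambda^{(1-N)/q}R\,\|\,|\nabla u|\,\|_{q,B_{R}},
\end{equation*}
and treats $q=\infty$ separately with a direct Lipschitz argument. You instead take the scaled Poincar\'e inequality $\|u-u_{B_{R}}\|_{q,B_{R}}\leq CR\|\nabla u\|_{q,B_{R}}$ as known, and compare the two averages $u_{B_{R}}$, $u_{B_{\lambda R}}$ by H\"older. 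The trade-offs: your derivation leans on a classical inequality as a black box and makes the constant structure a bit less explicit, but it unifies $q<\infty$ and $q=\infty$ in one computation; more noticeably, because your recursion has contraction factor exactly $\mu=\lambda^{s+1}$ rather than the paper's $2\lambda^{s+1}$, the condition $\mu<1$ is automatic from $\lambda\in(0,1)$ and $s>-1$, and you never need to shrink $\lambda$. Both your induction down to $k=1$ and your handling of the refined claim (radial integration from $R_{0}x/|x|$ for $k=1$, then bootstrapping via the first part for $k\geq 2$) match the paper. One small caveat to state explicitly: the Poincar\'e inequality with $R$-independent constant requires the scaling argument you only gesture at, and for $q=\infty$ it is not in every reference, though it follows from the same Lipschitz estimate the paper uses; also, writing $u_{B_R}$ presupposes $u\in L^{1}_{loc}$, which you should note follows from $\nabla u\in(L^{q}_{loc})^{N}$ via Schwartz's theorem before using it.
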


\begin{proof}
If $k=0,$ there is nothing to prove (and $s>-1$ is not needed). By
induction, it suffices to consider the case when $k=1.$ The same thing is
true for the ``furthermore'' part, for if the result is true when $k=1,$ it
implies that $\partial ^{\beta }u\in M_{0}^{1,\infty }$ when $|\beta
|_{1}=k-1$ and $k>1$ and it suffices to use the first part.

From now on, $k=1.$ We first settle the case $q=\infty .$ Assume that $%
\nabla u\in (M^{s,\infty })^{N}.$ There is a constant $C_{u}>0$ (for
instance, $C_{u}=||\,|\nabla u|\,||_{M^{s,\infty }}$) such that $||\,|\nabla
u|\,||_{\infty ,B_{R}}\leq C_{u}R^{s}$ for every $R\geq 1.$ Thus, $u$ is
Lipschitz continuous with constant $C_{u}R^{s}$ on $B_{R},$ so that $%
|u(x)|\leq |u(0)|+C_{u}R^{s}|x|\leq |u(0)|+C_{u}R^{s+1}$ for $x\in B_{R}.$
As a result, $R^{-s-1}||u||_{\infty ,B_{R}}\leq R^{-s-1}|u(0)|+C_{u}\leq
|u(0)|+C_{u}$ since $s>-1.$ This shows that $u\in M^{s+1,\infty }.$

If now $\nabla u\in (M_{0}^{s,\infty })^{N},$ just note that the constant $%
C_{u}$ above may be chosen arbitrarily small provided that $R$ is large
enough. The result then follows from $\lim \sup_{R\rightarrow \infty
}R^{-s-1}||u||_{\infty ,B_{R}}\leq C_{u}.$

If $\nabla u\in (M^{0,\infty })^{N}=(L^{\infty })^{N}$ and it is only
assumed that $\lim_{|x|\rightarrow \infty }|\nabla u(x)|=0$ (rather than the
trivial $\nabla u\in (M_{0}^{0,\infty })^{N}=\{0\}$), the proof must be
modified to show that $u\in M_{0}^{1,\infty }.$ Note that $u$ is continuous
on $\Bbb{R}^{N}.$

Given $\varepsilon >0,$ there is $R_{0}>0$ such that $||\,|\nabla
u|\,||_{\infty ,\Bbb{R}^{N}\backslash B_{R_{0}}}\leq \varepsilon .$ In
particular, $u $ is Lipschitz continuous with constant $\varepsilon $ in
every ball not intersecting $B_{R_{0}}.$ If $x\notin B_{R_{0}},$ set $%
x_{0}:=R_{0}x/|x|\in \partial B_{R_{0}},$ so that $x_{0}$ is in the closed
ball with center $x$ and radius $|x|-R_{0}$ (not intersecting $B_{R_{0}}$).
As a result, $|u(x)-u(x_{0})|\leq \varepsilon |x-x_{0}|$ and $|x-x_{0}|\leq
|x|$ since $x_{0}$ lies on the line segment between $0$ and $x.$ Hence, $%
|u(x)|\leq |u(x_{0})|+\varepsilon |x|\leq ||u||_{\infty
,B_{R_{0}}}+\varepsilon |x|$ and so $|u(x)|\leq ||u||_{\infty
,B_{R_{0}}}+\varepsilon |x|$ for every $x\in \Bbb{R}^{N}$ since the
inequality is trivial when $x\in B_{R_{0}}.$ This yields $||u||_{\infty
,B_{R}}\leq ||u||_{\infty ,B_{R_{0}}}+\varepsilon R$ for every $R>0,$ whence 
$\lim \sup_{R\rightarrow \infty }R^{-1}||u||_{\infty ,B_{R}}\leq \varepsilon
.$ Thus, $\lim_{R\rightarrow \infty }R^{-1}||u||_{\infty ,B_{R}}=0,$ i.e., $%
u\in M_{0}^{1,\infty }.$

In the remainder of the proof, $q<\infty .$ Let $R>0$ be given. As a first
step, we prove the inequality 
\begin{equation}
\left. 
\begin{array}{c}
||u||_{q,B_{R}}\leq 2\lambda ^{-N/q}||u||_{q,B_{\lambda R}}+2\lambda
^{(1-N)/q}R||\,|\nabla u|\,||_{q,B_{R}},
\end{array}
\right.  \label{11}
\end{equation}
for every $0<\lambda <1$ and every $u\in W^{1,q}(B_{R}).$ Since $q<\infty ,$
it suffices to prove (\ref{11}) when $u\in \mathcal{C}^{\infty }(\overline{B}
_{R}).$ In what follows, $\partial _{\rho }u$ denotes the radial derivative
of $u.$

For $0\leq t<R$ and $\theta \in \Bbb{S}^{N-1},$ write $u(t,\theta
)=u(\lambda t,\theta )+\int_{\lambda t}^{t}\partial _{\rho }u(\tau ,\theta
)d\tau .$ By H\"{o}lder's inequality and since $0<1-\lambda <1,$%
\begin{equation*}
\left. 
\begin{array}{r}
|u(t,\theta )|^{q}\leq 2^{q-1}\left( |u(\lambda t,\theta )|^{q}+\left(
\int_{\lambda t}^{t}|\partial _{\rho }u(\tau ,\theta )|d\tau \right)
^{q}\right) \leq \qquad \qquad \qquad \\ 
2^{q-1}\left( |u(\lambda t,\theta )|^{q}+t^{q-1}\int_{\lambda
t}^{t}|\partial _{\rho }u(\tau ,\theta )|^{q}d\tau \right) .
\end{array}
\right.
\end{equation*}
Multiply by $t^{N-1}$ and use $t\leq \tau /\lambda $ for $\tau \geq \lambda
t $ to get 
\begin{equation*}
\left. 
\begin{array}{r}
t^{N-1}|u(t,\theta )|^{q}\leq 2^{q-1}\lambda ^{1-N}(\lambda
t)^{N-1}|u(\lambda t,\theta )|^{q}+\qquad \qquad \qquad \qquad \\ 
2^{q-1}\lambda ^{1-N}t^{q-1}\int_{\lambda t}^{t}\tau ^{N-1}|\partial _{\rho
}u(\tau ,\theta )|^{q}d\tau .
\end{array}
\right.
\end{equation*}
By integrating over $\Bbb{S}^{N-1}$ and since $\int_{\lambda t}^{t}\tau
^{N-1}|\partial _{\rho }u(\tau ,\theta )|^{q}d\tau \leq \int_{0}^{R}\tau
^{N-1}|\partial _{\rho }u(\tau ,\theta )|^{q}d\tau ,$ we find 
\begin{equation*}
\left. 
\begin{array}{r}
\int_{\Bbb{S}^{N-1}}t^{N-1}|u(t,\theta )|^{q}d\theta \leq \qquad \qquad
\qquad \qquad \qquad \qquad \qquad \qquad \qquad \qquad \qquad \\ 
2^{q-1}\lambda ^{1-N}\int_{\Bbb{S}^{N-1}}(\lambda t)^{N-1}|u(\lambda
t,\theta )|^{q}d\theta +2^{q-1}\lambda ^{1-N}t^{q-1}||\,|\nabla
u|\,||_{q,B_{R}}^{q},
\end{array}
\right.
\end{equation*}
so that (\ref{11}) follows by $t$-integration over $(0,R),$ by using $%
(a+b)^{1/q}\leq a^{1/q}+b^{1/q}$ for $a,b\geq 0,2^{(q-1)/q}<2$ and $%
q^{-1/q}\leq 1.$

Assume now that $\nabla u\in (M^{s,q})^{N},$ so that $u\in W_{loc}^{1,q}.$
In particular, (\ref{11}) holds for every $R>0$ and so 
\begin{multline*}
R^{-s-1-N/q}||u||_{q,B_{R}}\leq \\
2\lambda ^{s+1}(\lambda R)^{-s-1-N/q}||u||_{q,B_{\lambda R}}+2\lambda
^{(1-N)/q}R^{-s-N/q}||\,|\nabla u|\,||_{q,B_{R}},
\end{multline*}
that is, 
\begin{equation}
\left. 
\begin{array}{c}
H(R)\leq 2\lambda ^{s+1}H(\lambda R)+2\lambda
^{(1-N)/q}R^{-s-N/q}||\,|\nabla u|\,||_{q,B_{R}},
\end{array}
\right.  \label{12}
\end{equation}
where $H(R):=R^{-s-1-N/q}||u||_{q,B_{R}}.$ Note that $H$ is continuous on $%
(0,\infty )$ since $q<\infty .$ Choose $\lambda \in (0,1)$ small enough that 
$2\lambda ^{s+1}<1,$ which is possible since $s>-1.$ The assumption $\nabla
u\in (M^{s,q})^{N}$ ensures that $2\lambda ^{(1-N)/q}R^{-s-N/q}||\,|\nabla
u|\,||_{q,B_{R}}$ is bounded irrespective of $R\geq 1.$ By Lemma \ref{lm6}
with $\mu =2\lambda ^{s+1},$ it follows that $H$ is bounded on $[1,\infty )$
and so $u\in M^{s+1,q}.$

If $\nabla u\in (M_{0}^{s,q})^{N},$ then for every $\varepsilon >0$ there is 
$R_{\varepsilon }>0$ such that $R^{-s-N/q}||\,|\nabla
u|\,||_{q,B_{R}}<\varepsilon $ if $R\geq R_{\varepsilon }.$ Thus, by (\ref
{12}), $H(R)\leq 2\lambda ^{s+1}H(\lambda R)+2\lambda ^{(1-N)/q}\varepsilon $
if $R\geq R_{\varepsilon }.$ By Lemma \ref{lm6} (still with $\mu =2\lambda
^{s+1}<1$), $\lim \sup_{R\rightarrow \infty }H(R)\leq 2\lambda
^{(1-N)/q}\varepsilon (1-2\lambda ^{s+1})^{-1},$ so that $\lim_{R\rightarrow 
\infty }H(R)=0$ since $\varepsilon >0$ is arbitrary. Hence, $u\in
M_{0}^{s+1,q}.$
\end{proof}

If $k\geq 1$ and $s<-1,$ Theorem \ref{th7} is always false: If $u$ is a
polynomial of degree exactly $k-1\geq 0,$ then $\nabla ^{k}u=0\in
M_{0}^{s,q} $ but $u\notin M^{s+k,q}$ since $s+k<k-1$ (Theorem \ref{th4}
(ii)). If $s=-1, $ it takes a bit more work to show that it is true from $%
M^{-1,q}$ to $M^{k-1,q}$ (but never from $M_{0}^{-1,q}$ to $M_{0}^{k-1,q}$)
in only two cases with little to no interest: If $q>N$ (trivial since $%
M^{-1,q}=\{0\}$ and $\nabla ^{k}u=0$ if and only if $u\in \mathcal{P}
_{k-1}\subset M^{k-1,q}$) or if $q=N=1$ (if $u^{(k)}\in M^{-1,1}=L^{1},$
then $u^{(k-1)}\in L^{\infty }=M^{0,\infty },$ so that $u\in M^{k-1,\infty
}\subset M^{k-1,1}$ by Theorem \ref{th7} with $s=k-1\geq 0$).

\begin{theorem}
\label{th8}(i) If $k\in \Bbb{N}_{0}$ and $N<p\leq \infty ,$ then $%
D^{k,p}\subset M^{s,p}$ for every $s\geq k-N/p$ and $D^{k,p}\subset
M_{0}^{s,p}$ for every $s>k-N/p.$\newline
(ii) If $k\in \Bbb{N}_{0}$ and $1\leq p\leq N,$ then $D^{k,p}\subset
M_{0}^{s,p}$ for every $s>k-1.$\newline
\end{theorem}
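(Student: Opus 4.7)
The plan is to deduce both (i) and (ii) from Theorem~\ref{th7} applied to the derivatives of order $k$, starting from the identification $L^{p}=M^{-N/p,p}$ and using the ordering of the $M^{s,q}$ scale provided by Theorem~\ref{th1}. The dichotomy $p>N$ versus $p\le N$ is exactly the dichotomy between $-N/p>-1$ and $-N/p\le -1$, and Theorem~\ref{th7} requires the exponent $s$ to satisfy $s>-1$. That requirement is the only real obstruction and dictates the split into the two cases.

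For (i), assume $p>N$ and let $u\in D^{k,p}$. Then every $\partial^{\beta}u$ with $|\beta|_{1}=k$ lies in $L^{p}=M^{-N/p,p}$. Since $-N/p>-1$, Theorem~\ref{th7} applies directly with $s=-N/p$ and $q=p$, yielding $u\in M^{k-N/p,p}$. The two embedding statements then follow from Theorem~\ref{th1}: part (ii) gives $M^{k-N/p,p}\hookrightarrow M^{s,p}$ for every $s\geq k-N/p$, and part (iv) gives $M^{k-N/p,p}\hookrightarrow M_{0}^{s,p}$ for every $s>k-N/p$.

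For (ii), assume $1\le p\le N$, so that $-N/p\le -1$ and one cannot feed $s=-N/p$ into Theorem~\ref{th7}. The remedy is to first trade integrability at infinity for a slightly larger exponent. Fix any $s_{0}\in(-1,0)$; since $-N/p\le -1<s_{0}$, Theorem~\ref{th1}(iv) gives
\begin{equation*}
M^{-N/p,p}\hookrightarrow M_{0}^{s_{0},p},
\end{equation*}
so every $\partial^{\beta}u$ with $|\beta|_{1}=k$ belongs to $M_{0}^{s_{0},p}$. Because $s_{0}>-1$, Theorem~\ref{th7} is now applicable and yields $u\in M_{0}^{s_{0}+k,p}$. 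Given any $s>k-1$, choose $s_{0}\in(-1,s-k+1)\cap(-1,0)$; then $s_{0}+k<s$ and a further application of Theorem~\ref{th1}(ii) (or (iv)) gives $u\in M_{0}^{s,p}$, which is the desired conclusion.

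The potentially tricky point is that in (ii) one does not get any statement at the endpoint $s=k-1$, and this is unavoidable: Theorem~\ref{th7} genuinely fails at $s=-1$, as already noted in the remark immediately preceding the statement of Theorem~\ref{th8}. Apart from that, the argument is purely a bookkeeping exercise in combining Theorems~\ref{th1} and~\ref{th7}, and the case $k=0$ is subsumed (it reduces to the trivial embeddings $M^{-N/p,p}\hookrightarrow M^{s,p}$ and $M^{-N/p,p}\hookrightarrow M_{0}^{s,p}$ from Theorem~\ref{th1}).
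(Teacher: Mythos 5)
Your argument is correct and essentially the same as the paper's: both cases amount to feeding $\partial ^{\beta }u\in L^{p}=M^{-N/p,p}$ into Theorem~\ref{th7}, with the split at $p=N$ governed by whether $-N/p>-1$, and with Theorem~\ref{th1}~(iv) supplying the shift into $M_{0}^{s_{0},p}$ for some $s_{0}>-1$ when $p\leq N$ (the paper writes $s_{0}=-1+\varepsilon $). One cosmetic slip in (ii): to guarantee $s_{0}+k<s$ you should take $s_{0}\in (-1,s-k)$ (nonempty precisely because $s>k-1$), not $s_{0}\in (-1,s-k+1)$; with that correction the bookkeeping closes as you intend.
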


\begin{proof}
If $u\in D^{k,p},$ then $\partial ^{\beta }u\in L^{p}=M^{-N/p,p}$ when $%
|\beta |_{1}=k.$ In case (i), Theorem \ref{th7} with $q=p$ and $s-k$ instead
of $s$ directly yields $u\in M^{s,p}$ for $s\geq k-N/p$ because $s-k\geq
-N/p>-1$ and $M^{-N/p,p}\subset M^{s-k,p}.$ That $D^{k,p}\subset M_{0}^{s,p}$
for every $s>k-N/p$ follows from Theorem \ref{th1} (iv).

In case (ii), $-N/p\leq -1,$ so that $M^{-N/p,p}\subset M^{-1,p}\subset
M_{0}^{-1+\varepsilon ,p}$ for every $\varepsilon >0$ (Theorem \ref{th1}
(iv)) and then $u\in M_{0}^{k-1+\varepsilon ,p}$ by Theorem \ref{th7} with $%
s=$ $-1+\varepsilon .$
\end{proof}

Since $||\,|\nabla ^{k}u|\,||_{p}$ is only a seminorm on $D^{k,p}$ when $%
k\geq 1,$ the embeddings of Theorem \ref{th8} are not continuous if $D^{k,p}$
is equipped with this seminorm, but it is easy to get around this problem.
For $k\in \Bbb{N}_{0},$ we define 
\begin{equation}
\left. 
\begin{array}{c}
\dot{D}^{k,p}:=D^{k,p}/\mathcal{P}_{k-1},
\end{array}
\right.  \label{13}
\end{equation}
a (reflexive if $1<p<\infty $) Banach space for the norm $||[u]_{k-1}||_{%
\dot{D}^{k,p}}:=||\,|\nabla ^{k}u|\,||_{p}$ with $u\in D^{k,p},$ where $%
[u]_{k-1}$ denotes the equivalence class modulo $\mathcal{P}_{k-1}$ (\cite
{Ga11}, \cite[p. 22]{Ma11}). Recall that $\mathcal{P}_{-1}=\{0\},$ so that $%
\dot{D}^{0,p}=D^{0,p}=L^{p}.$

\begin{remark}
\label{rm2}By a theorem of Sobolev \cite{So63}, $\mathcal{C}_{0}^{\infty }$
is dense in $\dot{D}^{k,p}$ if $1<p<\infty .$ See Hajlasz and Kalamajska 
\cite{HaKa95} for a simple proof and for the case $p=1,N>1.$
\end{remark}

The next theorem asserts that the set-theoretic embedding $D^{k,p}\subset
M^{s,q}$ (or $D^{k,p}\subset M_{0}^{s,q}$) is always equivalent to the
topological embedding $\dot{D}^{k,p}\hookrightarrow M^{s,q}/\mathcal{P}
_{k-1} $ (or $\dot{D}^{k,p}\hookrightarrow M_{0}^{s,q}/\mathcal{P}_{k-1}$).
In particular, the embedding of quotient spaces corresponding to the
embeddings of Theorem \ref{th8} are continuous.

\begin{theorem}
\label{th9} If $k\in \Bbb{N},s\geq k-1$ ($s>k-1$) and $1\leq p,q\leq \infty ,
$ then $\dot{D}^{k,p}\hookrightarrow M^{s,q}/\mathcal{P}_{k-1}$ ($\dot{D}%
^{k,p}\hookrightarrow M_{0}^{s,q}/\mathcal{P}_{k-1}$) if and only if $%
D^{k,p}\subset M^{s,q}$ ($D^{k,p}\subset M_{0}^{s,q}$).
\end{theorem}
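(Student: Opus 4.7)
The plan is a closed-graph argument in the style that routinely converts set-theoretic inclusions of Banach spaces of distributions into continuous embeddings.

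One direction is essentially automatic. Assume $\dot{D}^{k,p}\hookrightarrow M^{s,q}/\mathcal{P}_{k-1}$ continuously. Given $u\in D^{k,p}$, the class $[u]_{k-1}$ corresponds under the embedding to some class in $M^{s,q}/\mathcal{P}_{k-1}$ admitting a representative $v\in M^{s,q}$ with $u-v\in \mathcal{P}_{k-1}$. Since $s\geq k-1$, Theorem~\ref{th4}(ii) gives $\mathcal{P}_{k-1}\subset M^{s,q}$, so $u=v+(u-v)\in M^{s,q}$. The same reasoning works in the $M_{0}^{s,q}$ case because $s>k-1$ forces $\mathcal{P}_{k-1}\subset M_{0}^{s,q}$.

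For the main direction, assume the set-theoretic inclusion $D^{k,p}\subset M^{s,q}$. Since $\mathcal{P}_{k-1}\subset D^{k,p}\cap M^{s,q}$ (the second inclusion by Theorem~\ref{th4}(ii)), the natural assignment $T[u]_{k-1}:=[u]$ is a well-defined linear map $T:\dot{D}^{k,p}\to M^{s,q}/\mathcal{P}_{k-1}$. Both source and target are Banach spaces ($M^{s,q}$ by Theorem~\ref{th1}(i), and $\mathcal{P}_{k-1}$ is a closed finite-dimensional subspace), so I intend to verify continuity via the closed graph theorem.

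To check the graph is closed, suppose $[u_{n}]_{k-1}\to [u]_{k-1}$ in $\dot{D}^{k,p}$ and $T[u_{n}]_{k-1}\to [v]$ in $M^{s,q}/\mathcal{P}_{k-1}$. By definition of the quotient norm I can pick representatives $w_{n}=u_{n}+\pi_{n}$ with $\pi_{n}\in\mathcal{P}_{k-1}$ such that $w_{n}\to v$ in $M^{s,q}$; since $\nabla^{k}\pi_{n}=0$, simultaneously $\nabla^{k}w_{n}=\nabla^{k}u_{n}\to\nabla^{k}u$ in $L^{p}$. By Theorem~\ref{th4}(i), $M^{s,q}\hookrightarrow\mathcal{S}'$, so $w_{n}\to v$ in $\mathcal{S}'$ and hence $\nabla^{k}w_{n}\to\nabla^{k}v$ in $\mathcal{S}'$. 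The $L^{p}$ limit must coincide with the $\mathcal{S}'$ limit, giving $\nabla^{k}v=\nabla^{k}u$, i.e.\ $v-u\in\mathcal{P}_{k-1}$. Thus $[v]=[u]=T[u]_{k-1}$, closing the graph. The closed graph theorem then delivers the desired continuous embedding. The verbatim same argument, using $s>k-1$ to ensure $\mathcal{P}_{k-1}\subset M_{0}^{s,q}$ and that $M_{0}^{s,q}$ is closed in $M^{s,q}$ (Theorem~\ref{th1}(iii)) so that $M_{0}^{s,q}/\mathcal{P}_{k-1}$ is Banach, handles the $M_{0}^{s,q}$ version.

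The only place requiring care, and what I would flag as the main (minor) obstacle, is the bookkeeping of polynomial representatives when comparing the two modes of convergence: one must extract a single sequence of representatives $w_{n}$ that simultaneously converges in $M^{s,q}$ and has $k$-th derivatives converging in $L^{p}$, and then identify the two limits as distributions using Theorem~\ref{th4}(i). Once this identification is made, the modular equality $v-u\in\mathcal{P}_{k-1}$ and the closed graph theorem finish everything.
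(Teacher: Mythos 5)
Your proof is correct and follows essentially the same route as the paper's: both establish well-definedness of the natural quotient map from $\mathcal{P}_{k-1}\subset M^{s,q}$ (Theorem~\ref{th4}(ii)), then apply the closed graph theorem, using the embedding $M^{s,q}\hookrightarrow\mathcal{S}'$ (Theorem~\ref{th4}(i)) to identify the two limits and conclude $u-v\in\mathcal{P}_{k-1}$. The bookkeeping of polynomial representatives you flag is exactly the point the paper also handles, and in the same way.
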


\begin{proof}
The necessity is obvious. For the sufficiency, we only prove that if $s\geq
k-1$ and $D^{k,p}\subset M^{s,q},$ then $\dot{D}^{k,p}\hookrightarrow
M^{s,q}/\mathcal{P}_{k-1}.$ The exact same argument works in the other case.

Both $\dot{D}^{k,p}$ and $M^{s,q}/\mathcal{P}_{k-1}$ are Banach spaces, the
latter since $M^{s,q}$ is complete (Theorem \ref{th1} (i)) and $\mathcal{P}%
_{k-1}$ is finite dimensional. Therefore, by the closed graph theorem, it
suffices to show that if $[u_{n}]_{k-1}\in \dot{D}^{k,p}$ is a sequence such
that $[u_{n}]_{k-1}\rightarrow [u]_{k-1}$ in $\dot{D}^{k,q}$ and $%
[u_{n}]_{k-1}\rightarrow [v]_{k-1}$ in $M^{s,q}/\mathcal{P}_{k-1},$ then $%
[u]_{k-1}=[v]_{k-1}.$

That $[u_{n}]_{k-1}\rightarrow [u]_{k-1}$ in $\dot{D}^{k,p}$ means that $%
u_{n},u\in D^{k,p}$ and $\partial ^{\alpha }u_{n}\rightarrow \partial
^{\alpha }u$ in $L^{p}$ when $|\alpha |_{1}=k.$ On the other hand, since $%
M^{s,q}$ is a normed space, $[u_{n}]_{k-1}\rightarrow [v]_{k-1}$ in $M^{s,q}/%
\mathcal{P}_{k-1}$ means that $v\in M^{s,q}$ and that there is a sequence $%
\pi _{n}\in \mathcal{P}_{k-1}$ such that $u_{n}-\pi _{n}\rightarrow v$ in $%
M^{s,q}.$

By Theorem \ref{th4} (i), $u_{n}-\pi _{n}\rightarrow v$ in $\mathcal{S}
^{\prime }.$ Since $\deg \pi _{n}\leq k-1$ and differentiation is continuous
on $\mathcal{S}^{\prime },\partial ^{\alpha }u_{n}\rightarrow \partial
^{\alpha }v$ in $\mathcal{S}^{\prime }$ when $|\alpha |_{1}=k.$ As a result, 
$\partial ^{\alpha }u=\partial ^{\alpha }v$ since $\partial ^{\alpha
}u_{n}\rightarrow \partial ^{\alpha }u$ in $L^{p}\hookrightarrow \mathcal{S}
^{\prime }.$ Thus, $u-v\in \mathcal{P}_{k-1},$ i.e., $[u]_{k-1}=[v]_{k-1}.$
\end{proof}

\section{Regularity\label{regularity}}

In this section, we prove a more general form of Theorem \ref{thA}. The
proof will follow from Theorem \ref{th8} combined with another preliminary
result (Theorem \ref{th12} below) with somewhat of a folklore status. For
instance, related inequalities are quickly mentioned in Bergh and
L\"{o}fstr\"{o}m \cite[p. 167]{BeLo76}, with a reference to H\"{o}rmander 
\cite{Ho63}, where apparently nothing relevant is to be found. When $A=%
\Delta ,$ special cases have been proved ``as needed'' (\cite{FaSo94} when $%
\kappa =-1,$ \cite{GaMeSpTa14} when $\kappa =0,$ \cite{SiSo96} when $\kappa 
\geq 0$). For completeness, we give a full self-contained proof. The
following lemma is the first step.

\begin{lemma}
\label{lm10} If $\kappa \in \Bbb{N}_{0}$ and $1<p<\infty ,$ the homogeneous
elliptic operator $A$ in (\ref{1}) is a linear isomorphism from $\dot{D}%
^{m+\kappa ,p}$ onto $\dot{D}^{\kappa ,p}.$
\end{lemma}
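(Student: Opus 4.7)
The plan is to verify three properties of the induced map $A\colon\dot{D}^{m+\kappa,p}\to\dot{D}^{\kappa,p}$: it is well-defined and bounded, it is injective, and it is surjective. Since both quotients are Banach spaces, the open mapping theorem then promotes the bijection to an isomorphism.

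Boundedness of the quotient map is immediate. As a constant-coefficient operator of exact order $m$, $A$ sends $\mathcal{P}_{m+\kappa-1}$ into $\mathcal{P}_{\kappa-1}$ and so descends to the quotients; moreover (\ref{1}) makes $\nabla^{\kappa}(Au)$ a linear combination of components of $\nabla^{m+\kappa}u$, giving $\|[Au]_{\kappa-1}\|_{\dot{D}^{\kappa,p}}\le C\|[u]_{m+\kappa-1}\|_{\dot{D}^{m+\kappa,p}}$. For injectivity, suppose $[Au]_{\kappa-1}=0$, so $Au\in\mathcal{P}_{\kappa-1}\subset\mathcal{P}$. In either case of Theorem \ref{th8} one finds some $s<m+\kappa$ with $u\in M_{0}^{s,p}$ (just above $m+\kappa-N/p$ if $N<p<\infty$, just above $m+\kappa-1$ if $1<p\le N$). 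Corollary \ref{cor5} (applied with $B=A$) then forces $u\in\mathcal{P}$ with $\deg u<s<m+\kappa$, and since $\deg u\in\Bbb{N}_{0}$ this gives $u\in\mathcal{P}_{m+\kappa-1}$, i.e., $[u]_{m+\kappa-1}=0$.

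Surjectivity is the crux. My first step is to establish the a priori estimate
\[
\|\,|\nabla^{m+\kappa}u|\,\|_{p}\le C\,\|\,|\nabla^{\kappa}\varphi|\,\|_{p}\qquad\text{for every }\varphi\in\mathcal{C}_{0}^{\infty},
\]
where $u:=E*\varphi\in\mathcal{C}^{\infty}$ is the smooth solution of $Au=\varphi$ obtained by convolution with any fundamental solution $E$ of $A$. Fix a multi-index $\gamma$ with $|\gamma|_{1}=m+\kappa$, split it as $\gamma=\alpha+\beta$ with $|\alpha|_{1}=m$ and $|\beta|_{1}=\kappa$, and compute the Fourier transform
\[
\widehat{\partial^{\gamma}u}(\xi)=\frac{i^{m}\xi^{\alpha}}{A(\xi)}\,\widehat{\partial^{\beta}\varphi}(\xi).
\]
By ellipticity (\ref{2}), the symbol $\xi^{\alpha}/A(\xi)$ is $\mathcal{C}^{\infty}$ on $\Bbb{R}^{N}\backslash\{0\}$ and homogeneous of degree $0$, so it satisfies the Mihlin--H\"{o}rmander condition and defines a bounded $L^{p}$ Fourier multiplier for $1<p<\infty$; summing over $\gamma$ yields the displayed estimate. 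For a general $f\in D^{\kappa,p}$, density of $\mathcal{C}_{0}^{\infty}$ in $\dot{D}^{\kappa,p}$ (Remark \ref{rm2}) supplies $\varphi_{n}\in\mathcal{C}_{0}^{\infty}$ with $[\varphi_{n}]_{\kappa-1}\to[f]_{\kappa-1}$; the a priori estimate makes $\{[E*\varphi_{n}]_{m+\kappa-1}\}$ Cauchy in the Banach space $\dot{D}^{m+\kappa,p}$, and its limit $[u]_{m+\kappa-1}$ satisfies $A[u]_{m+\kappa-1}=[f]_{\kappa-1}$ by continuity of $A$.

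The main obstacle is precisely the Mihlin--H\"{o}rmander estimate underlying surjectivity: obtaining $L^{p}$ control of the top-order derivatives of $u$ purely from $L^{p}$ control of $\nabla^{\kappa}Au$ is the Calder\'{o}n--Zygmund content of the proof, and it is what confines the lemma to the range $1<p<\infty$.
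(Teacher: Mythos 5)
Your overall architecture mirrors the paper's proof exactly: well-definedness and boundedness are immediate; injectivity follows from Theorem~\ref{th8} plus Corollary~\ref{cor5}; surjectivity comes from convolving test functions with a tempered fundamental solution, invoking the Mikhlin multiplier theorem, and passing to the limit via the density of $\mathcal{C}_{0}^{\infty}$ in $\dot{D}^{\kappa,p}$.

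There is, however, a genuine gap hidden in the word ``compute'' in the surjectivity step, and it is precisely the point the paper spends most of its proof addressing. You claim, for \emph{any} fundamental solution $E$ and any $|\alpha|_{1}=m$, that $\widehat{\partial^{\alpha+\beta}(E*\varphi)}=i^{m}\xi^{\alpha}A(\xi)^{-1}\widehat{\partial^{\beta}\varphi}$ with $\xi^{\alpha}A(\xi)^{-1}$ read as a bounded \emph{function}. When $m<N$ one may simply take $\widehat{E}=A(\xi)^{-1}\in L_{loc}^{1}\cap\mathcal{S}^{\prime}$ and this is clear. But when $m\geq N$, the function $A(\xi)^{-1}$ is not locally integrable near $0$, so $\widehat{E}$ cannot be that function; it is some tempered distribution solving the division problem $A(\xi)\widehat{E}=1$, and a priori $\xi^{\alpha}\widehat{E}$ differs from the function $\xi^{\alpha}A(\xi)^{-1}$ by a distribution supported at the origin (a linear combination of derivatives of $\delta$). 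Indeed, two fundamental solutions differ by a polynomial $P$ solving $AP=0$; on the Fourier side $\widehat{P}$ is supported at $\{0\}$, and if $\deg P\geq m$ then $\xi^{\alpha}\widehat{P}$ need not vanish. Hence ``any fundamental solution'' will not do: one must exhibit a \emph{specific} $\widehat{E}$ (the paper constructs it as a finite part, following H\"{o}rmander) and then verify, using $\psi_{\xi^{\alpha}\phi}^{(j)}(0)=0$ for $j\leq m-1$, that for this choice $\xi^{\alpha}\widehat{E}$ really is the bounded function $\xi^{\alpha}A(\xi)^{-1}$. Without that verification, the Mikhlin theorem has nothing to act on, so the a priori estimate $\|\,|\nabla^{m+\kappa}(E*\varphi)|\,\|_{p}\leq C\|\,|\nabla^{\kappa}\varphi|\,\|_{p}$ is not justified. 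Everything after that (Cauchy sequence, continuity of $A$, passing to the limit) is correct and matches the paper.
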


\begin{proof}
If $v,w\in \dot{D}^{m+\kappa ,p}$ and $[v]_{m+\kappa -1}=[w]_{m+\kappa -1},$
then $w=v+\pi $ where $\pi \in \mathcal{P}_{m+\kappa -1},$ whence $%
Aw=Av+A\pi $ with $A\pi \in \mathcal{P}_{\kappa -1}$ (since $A$ is
homogeneous of order $m$). Thus, $[Av]_{\kappa -1}$ is independent of the
representative of $[v]_{m+\kappa -1},$ so that $A:$ $\dot{D}^{m+\kappa
,p}\rightarrow $ $\dot{D}^{\kappa ,p}$ is well defined by $A[v]_{m+\kappa
-1}:=[Av]_{\kappa -1}.$ From the definitions of the norms of $\dot{D}
^{m+\kappa ,p}$ and $\dot{D}^{\kappa ,p},$ it follows at once that $A:\dot{D}
^{m+\kappa ,p}\rightarrow \dot{D}^{\kappa ,p}$ is continuous.

If $[v]_{m+\kappa -1}\in \dot{D}^{m+\kappa ,p}$ and $A[v]_{m+\kappa
-1}=[0]_{\kappa -1},$ then $[Av]_{\kappa -1}=[0]_{\kappa -1}$ for every
representative $v$ of $[v]_{m+\kappa -1},$ i.e., $Av\in \mathcal{P}_{\kappa
-1}.$ By Theorem \ref{th8}, $D^{m+\kappa ,p}\subset M_{0}^{m+\kappa ,p}$ and
so, by Corollary \ref{cor5}, $v\in \mathcal{P}_{m+\kappa -1}.$ Thus, $%
[v]_{m+\kappa -1}=[0]_{m+\kappa -1}$ and so $A$ is one-to-one on $\dot{D}%
^{m+\kappa ,p}.$

It remains to prove the surjectivity of $A.$ A distribution $E\in \mathcal{S}%
^{\prime }$ is a fundamental solution of $A$ if and only if $E=\mathcal{F}%
^{-1}\widehat{E},$ where $\widehat{E}\in \mathcal{S}^{\prime }$ is any
distribution such that $A(\cdot )\widehat{E}=1.$ Although the existence of a
tempered fundamental solution has long been known for all nonzero
differential operators with constant coefficients (H\"{o}rmander \cite{Ho58}%
, Lojasiewicz \cite{Lo59}), we need a more precise result in the simpler
case of this lemma. The construction below, needed to clarify an important
point, is implicit in H\"{o}rmander \cite{Ho83}; see also Camus \cite{Ca06}.

If $m<N,$ it follows from the ellipticity and homogeneity of $A$ that the
function $\widehat{E}:=A(\cdot )^{-1}\in L_{loc}^{1}\cap \mathcal{S}^{\prime
}$ solves the division problem. If $m\geq N,$ we may proceed as follows.
Given $\phi (=\phi (\xi ))\in \mathcal{C}_{0}^{\infty }$ and $\rho \geq 0,$
set 
\begin{equation*}
\left. 
\begin{array}{c}
\psi _{\phi }(\rho ):=\int_{\Bbb{S}^{N-1}}A(\sigma )^{-1}\phi (\rho \sigma
)d\sigma .
\end{array}
\right. 
\end{equation*}
This makes sense since $A(\sigma )^{-1}$ is bounded away from $0$ on $\Bbb{S}%
^{N-1}.$ Clearly, $\psi _{\phi }\in \mathcal{C}_{0}^{\infty }([0,\infty ))$
with 
\begin{equation}
\left. 
\begin{array}{c}
\psi _{\phi }^{(j)}(\rho )=\int_{\Bbb{S}^{N-1}}A(\sigma )^{-1}D^{j}\phi
(\rho \sigma )\sigma ^{j}d\sigma ,\qquad \forall j\in \Bbb{N}_{0}.
\end{array}
\right.   \label{14}
\end{equation}

Formally, $\widehat{E}\,``="A(\cdot )^{-1}$ should be given by $\langle 
\widehat{E},\phi \rangle =\int_{0}^{\infty }\rho ^{N-1-m}\psi _{\phi }(\rho
)d\rho $ but, since $\rho ^{N-1-m}\notin L_{loc}^{1}([0,\infty ))$ when $%
m\geq N,$ the integral is not defined. We replace it with its finite part
(see Schwartz \cite[p. 42]{Sc66}, H\"{o}rmander \cite[p. 69 ff]{Ho83}),
thereby defining $\widehat{E}$ by 
\begin{multline}
\langle \widehat{E},\phi \rangle =  \label{15} \\
\left. 
\begin{array}{r}
\lim_{\varepsilon \rightarrow 0^{+}}\left[ \int_{\varepsilon }^{\infty }\rho
^{N-1-m}\psi _{\phi }(\rho )d\rho +\sum_{j=0}^{m-N-1}\frac{\psi _{\phi
}^{(j)}(0)}{j!}\frac{\varepsilon ^{N-m+j}}{N-m+j}+\frac{\psi _{\phi
}^{(m-N)}(0)}{(m-N)!}\log \varepsilon \right] \\ 
=\frac{1}{(m-N)!}\left[ -\int_{0}^{\infty }(\log \rho )\psi _{\phi
}^{(m-N+1)}(\rho )d\rho +\left( \sum_{j=1}^{m-N}j^{-1}\right) \psi _{\phi
}^{(m-N)}(0)\right] .
\end{array}
\right.
\end{multline}
It is readily checked that $\widehat{E}$ is a distribution on $\Bbb{R}^{N}$
and that $\widehat{E}\in \mathcal{S}^{\prime }.$ Furthermore, $\psi
_{A(\cdot )\phi }(\rho )=\rho ^{m}\int_{\Bbb{S}^{N-1}}\phi (\rho \sigma
)d\sigma $ (in particular, $\psi _{A(\cdot )\phi }^{(j)}(0)=0$ if $j\leq m-1$%
), which shows that $\langle \widehat{E},A(\cdot )\phi \rangle =\int_{\Bbb{R}%
^{N}}\phi ,$ i.e., that $A(\cdot )\widehat{E}=1,$ as desired.

Thus, $E:=\mathcal{F}^{-1}\widehat{E}\in \mathcal{S}^{\prime }$ is a
fundamental solution. For our purposes, the key property of $\widehat{E}$ is
that $\xi ^{\alpha }\widehat{E}$ is the bounded \emph{function} $\xi
^{\alpha }A(\xi )^{-1}$ when $|\alpha |_{1}=m.$ This is obvious if $m<N,$
for then $\widehat{E}$ is already a function. If $m\geq N,$ it follows from (%
\ref{14}) that $\psi _{\xi ^{\alpha }\phi }^{(j)}(0)=0$ for every $\phi \in 
\mathcal{C}_{0}^{\infty }$ when $j\leq m-1,$ so that, by (\ref{15}%
),\linebreak $\langle \widehat{E},\xi ^{\alpha }\phi \rangle
=\lim_{\varepsilon \rightarrow 0^{+}}\int_{\varepsilon }^{\infty }\rho
^{N-1-m}\psi _{\xi ^{\alpha }\phi }(\rho )d\rho =\lim_{\varepsilon
\rightarrow 0^{+}}\int_{|\xi |\geq \varepsilon }\xi ^{\alpha }A(\xi
)^{-1}\phi (\xi )d\xi =$\linebreak $\int_{\Bbb{R}^{N}}\xi ^{\alpha }A(\xi
)^{-1}\phi (\xi )d\xi ,$ which proves the claim.

In the remainder of the proof, $\alpha ,\beta $ and $\gamma $ denote
multi-indices and $|\alpha |_{1}=m.$ If $\varphi (=\varphi (x))\in \mathcal{C%
}_{0}^{\infty },$ then $E*\varphi \in \mathcal{S}^{\prime }$ (%
\cite[Theorem XI, p. 247]{Sc66}) solves $A(E*\varphi )=\varphi .$ Also, $%
\partial ^{\alpha +\beta }(E*\varphi )=\partial ^{\alpha }E*$ $\partial
^{\beta }\varphi \in \mathcal{S}^{\prime }$ and so $\mathcal{F(}\partial
^{\alpha +\beta }(E*\varphi ))=\widehat{\partial ^{\alpha }E}\widehat{%
\partial ^{\beta }\varphi }$ (e.g., because $\partial ^{\beta }\varphi \in 
\mathcal{C}_{0}^{\infty }$ and $\partial ^{\alpha }E\in \mathcal{S}^{\prime
};$ see \cite[p. 268]{Sc66}). Since $\widehat{\partial ^{\alpha }E}%
=(-i)^{m}\xi ^{\alpha }\widehat{E}$ and since $\xi ^{\alpha }\widehat{E}$ is
the function $\xi ^{\alpha }A(\xi )^{-1},$ it follows that $\partial
^{\alpha +\beta }(E*\varphi )=(-i)^{m}\mathcal{F}^{-1}(\xi ^{\alpha }A(\xi
)^{-1}\widehat{\partial ^{\beta }\varphi }).$

By the Mikhlin multiplier theorem (\cite[p. 96]{St70}), $\mathcal{F}
^{-1}(\xi ^{\alpha }A(\xi )^{-1}\mathcal{F})$ is a bounded operator on $%
L^{p}.$ As a result, $\partial ^{\alpha +\beta }(E*\varphi )\in L^{p}$ and
there is a constant $C_{\alpha }>0$ independent of $\varphi \in \mathcal{C}
_{0}^{\infty }$ such that $||\partial ^{\alpha +\beta }(E*\varphi
)||_{p}\leq C_{\alpha }||\partial ^{\beta }\varphi ||_{p}.$ Since every $%
\gamma $ with $|\gamma |_{1}=m+\kappa $ can be split in the form $\gamma
=\alpha +\beta $ with $|\alpha |_{1}=m$ and $|\beta |_{1}=\kappa ,$ this
shows that $E*\varphi \in D^{m+\kappa ,p}$ and that $||[E*\varphi
]_{m+\kappa -1}||_{\dot{D}^{m+\kappa ,p}}\leq C||[\varphi ]_{\kappa -1}||_{%
\dot{D}^{\kappa ,p}}$ where $C>0$ is independent of $\varphi .$

As noted in Remark \ref{rm2}, $\mathcal{C}_{0}^{\infty }$ is dense in $\dot{D%
}^{\kappa ,p}.$ Given $f\in D^{\kappa ,p},$ let then $\varphi _{n}\in 
\mathcal{C}_{0}^{\infty }$ tend to $[f]_{\kappa -1}$ in $\dot{D}^{\kappa
,p}, $ i.e., $\partial ^{\beta }\varphi _{n}\rightarrow \partial ^{\beta }f$
in $L^{p}$ when $|\beta |_{1}=\kappa .$ From the above, the sequence $%
[E*\varphi _{n}]_{m+\kappa -1}$ is a Cauchy sequence in $\dot{D}^{m+\kappa
,p},$ so that it has a limit $[v]_{m+\kappa -1}\in \dot{D}^{m+\kappa ,p}.$
By the continuity of $A,$ the convergence of $[E*\varphi _{n}]_{m+\kappa -1}$
to $[v]_{m+\kappa -1}$ in $\dot{D}^{m+\kappa ,p}$ implies $A[v]_{m+\kappa
-1}=\lim A[E*\varphi _{n}]_{m+\kappa -1}=\lim [A(E*\varphi _{n})]_{\kappa
-1}.$ Since $A(E*\varphi _{n})=\varphi _{n},$ this yields $A[v]_{m+\kappa
-1}=\lim [\varphi _{n}]_{\kappa -1}=[f]_{\kappa -1}.$ Thus, $A$ is onto $%
\dot{D}^{\kappa ,p}$ and the proof is complete.
\end{proof}

When $\kappa >0,$ neither Lemma \ref{lm10} nor its proof implies that $A$
maps $D^{m+\kappa ,p}$ onto $D^{\kappa ,p}.$ This issue will be resolved in
Theorem \ref{th12}.

If $\ell \in \Bbb{N}_{0}$ and $1<p<\infty ,$ we now set 
\begin{equation}
\left. 
\begin{array}{c}
D^{-\ell ,p}:=\left( \dot{D}^{\ell ,p^{\prime }}\right) ^{*},
\end{array}
\right.  \label{16}
\end{equation}
a Banach space for the dual norm. Consistent with (\ref{3}), this gives
again $D^{0,p}=L^{p}.$ Denote by 
\begin{equation}
\left. 
\begin{array}{c}
\nu (\ell ,N):=\binom{N+\ell -1}{\ell },
\end{array}
\right.  \label{17}
\end{equation}
the number of multi-indices $\alpha $ such that $|\alpha |_{1}=\ell .$ Since
the mapping $[u]_{\ell -1}\mapsto \nabla ^{\ell }u$ is an isometric
isomorphism of $\dot{D}^{\ell ,p^{\prime }}$ onto a closed subspace of $%
(L^{p^{\prime }})^{\nu (\ell ,N)},$ it follows from the Hahn-Banach theorem
that every $f\in D^{-\ell ,p}$ has the form $\langle f,[u]_{\ell -1}\rangle
=\sum_{|\alpha |_{1}=\ell }\int_{\Bbb{R}^{N}}f_{\alpha }\partial ^{\alpha }u$
where $f_{\alpha }\in L^{p}$ and $||f||_{D^{-\ell ,p}}=||(\sum_{|\alpha
|_{1}=\ell }|f_{\alpha }|^{2})^{1/2}||_{p}.$

Conversely, if $f\in D^{-\ell ,p}$ is defined by $\langle f,[u]_{\ell
-1}\rangle :=\sum_{|\alpha |_{1}=\ell }\int_{\Bbb{R}^{N}}f_{\alpha }\partial
^{\alpha }u$ for some $f_{\alpha }\in L^{p},$ then $||f||_{D^{-\ell ,p}}\leq
||(\sum_{|\alpha |_{1}=\ell }|f_{\alpha }|^{2})^{1/2}||_{p}$ and, by the
denseness of $\mathcal{C}_{0}^{\infty }$ in $\dot{D}^{\ell ,p^{\prime }}$
(Remark \ref{rm2}), $f\in D^{-\ell ,p}$ is uniquely determined by the
distribution $\sum_{|\alpha |_{1}=1}(-1)^{\ell }\partial ^{\alpha }f_{\alpha
}.$ By changing $f_{\alpha }$ into $(-1)^{\ell }f_{\alpha },$ it follows
that 
\begin{equation}
\left. 
\begin{array}{c}
D^{-\ell ,p}=\{f=\sum_{|\alpha |_{1}=\ell }\partial ^{\alpha }f_{\alpha
}:f_{\alpha }\in L^{p}\},
\end{array}
\right.  \label{18}
\end{equation}
equipped with the norm $\inf ||(\sum_{|\alpha |_{1}=\ell }|f_{\alpha
}|^{2})^{1/2}||_{p}$ (always a minimum). In particular, this shows that $%
\partial ^{\beta }$ maps $D^{\kappa ,p}$ into $D^{\kappa -|\beta |_{1},p}$
for every $\kappa \in \Bbb{Z}.$

We shall now extend Lemma \ref{lm10} when $\kappa \in \Bbb{Z}.$ To do this,
we need another lemma, in the spirit of Corollary \ref{cor5}.

\begin{lemma}
\label{lm11}If $1<p<\infty $ and $k\in \Bbb{Z},$ then $D^{k,p}\cap \mathcal{P%
}=\mathcal{P}_{k-1}.$
\end{lemma}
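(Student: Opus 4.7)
The approach is to split on the sign of $k$. When $k\geq 0$, the claim follows directly from (\ref{3}): the inclusion $\mathcal{P}_{k-1}\subset D^{k,p}$ is obvious since every $u\in \mathcal{P}_{k-1}$ has $\partial ^{\alpha }u=0\in L^{p}$ for all $|\alpha |_{1}=k$. Conversely, if $u\in \mathcal{P}\cap D^{k,p}$, then for each $|\alpha |_{1}=k$ the partial derivative $\partial ^{\alpha }u$ is a polynomial lying in $L^{p}$, hence vanishes identically (since no nonzero polynomial belongs to $L^{p}$ when $p<\infty $); thus $\deg u\leq k-1$, i.e., $u\in \mathcal{P}_{k-1}$.

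When $k<0$, write $\ell :=-k\geq 1$, so $\mathcal{P}_{k-1}=\{0\}$ by convention, and the task reduces to proving $D^{-\ell ,p}\cap \mathcal{P}=\{0\}$. Let $u\in D^{-\ell ,p}\cap \mathcal{P}$. Since $\mathcal{C}_{0}^{\infty }$ embeds into $\dot{D}^{\ell ,p^{\prime }}$ via $\varphi \mapsto [\varphi ]_{\ell -1}$ (injectively, because a compactly supported polynomial must vanish) and $u\in D^{-\ell ,p}=(\dot{D}^{\ell ,p^{\prime }})^{\ast }$, evaluation of $u$ on test functions yields
\begin{equation*}
\Big|\int_{\Bbb{R}^{N}}u\varphi \Big|\leq ||u||_{D^{-\ell ,p}}\,||\,|\nabla ^{\ell }\varphi |\,||_{p^{\prime }},\qquad \forall \varphi \in \mathcal{C}_{0}^{\infty }.
\end{equation*}
The plan is to exploit this inequality by inserting rescaled test functions. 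Fix $\varphi \in \mathcal{C}_{0}^{\infty }$ and set $\varphi _{\lambda }(x):=\varphi (x/\lambda )$ for $\lambda >0$. A change of variables gives $\int u\varphi _{\lambda }=\lambda ^{N}\int u(\lambda y)\varphi (y)\,dy$, which is a polynomial in $\lambda $ whose leading term is $\big(\int u_{d}(y)\varphi (y)\,dy\big)\lambda ^{N+d}$, where $d:=\deg u$ and $u_{d}$ is the top-degree homogeneous component of $u$. On the derivative side, $||\,|\nabla ^{\ell }\varphi _{\lambda }|\,||_{p^{\prime }}=\lambda ^{N/p^{\prime }-\ell }\,||\,|\nabla ^{\ell }\varphi |\,||_{p^{\prime }}$.

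Suppose $u\neq 0$, so that $u_{d}\not\equiv 0$. The test function $\varphi $ can be chosen so that the leading coefficient $\int u_{d}\varphi $ is nonzero, because $u_{d}$ is a nontrivial locally integrable function and hence defines a nonzero linear functional on $\mathcal{C}_{0}^{\infty }$. Then the left-hand side of the displayed inequality grows like $\lambda ^{N+d}$ as $\lambda \rightarrow \infty $, while the right-hand side grows like $\lambda ^{N/p^{\prime }-\ell }$. This forces $N+d\leq N/p^{\prime }-\ell $, i.e., $d+\ell +N/p\leq 0$, which is absurd since $d\geq 0$, $\ell \geq 1$ and $N/p>0$. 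Hence $u=0$, completing the argument. The only mildly delicate point is the choice of $\varphi $ producing a nonzero leading coefficient, but this is immediate from the nontriviality and local integrability of $u_{d}$.
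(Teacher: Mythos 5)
Your proof is correct, and it is a scaling argument just like the paper's, but with a different parametrization and without the paper's reduction step. The paper first isolates the constant case: assuming $1\in D^{-\ell,p}$ it tests against $\varphi(x)=\psi(\lambda x)$ and sends $\lambda\to 0$, getting $1\le C\lambda^{\ell+N/p}\|\,|\nabla^\ell\psi|\,\|_{p'}$ and a contradiction; then it handles a general nonzero polynomial $u\in D^{-\ell,p}$ by differentiating until a nonzero constant remains, noting that this constant lies in $D^{-\tilde\ell,p}$ with $\tilde\ell\ge\ell$. You instead test against $\varphi_\lambda(x)=\varphi(x/\lambda)$ with $\lambda\to\infty$ and work directly with the top homogeneous component $u_d$, comparing the degree-$(N+d)$ growth of $\int u\varphi_\lambda$ against the $\lambda^{N/p'-\ell}$ growth of $\|\,|\nabla^\ell\varphi_\lambda|\,\|_{p'}$. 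Both routes are sound; yours avoids the differentiation/reduction step at the small cost of singling out the leading homogeneous part and choosing $\varphi$ nondegenerate against it, while the paper's version keeps the scaling computation to the constant case only. One tiny point worth making explicit in your write-up: the estimate $|\int u\varphi|\le\|u\|_{D^{-\ell,p}}\|\,|\nabla^\ell\varphi|\,\|_{p'}$ uses the identification of the abstract dual $D^{-\ell,p}=(\dot D^{\ell,p'})^*$ with the space of distributions in (\ref{18}), so that the distributional pairing of the polynomial $u$ with $\varphi$ coincides with the duality pairing $\langle u,[\varphi]_{\ell-1}\rangle$; this is exactly what the density of $\mathcal{C}_0^\infty$ in $\dot D^{\ell,p'}$ (Remark \ref{rm2}) guarantees.
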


\begin{proof}
If $k\geq 0,$ the result is trivial since $L^{p}$ contains no nonzero
polynomial. If $k<0,$ it must be shown that if $u\in D^{k,p}$ is a
polynomial, then $u=0.$

Set $k=-\ell $ with $\ell \in \Bbb{N}.$ We first prove that $u$ cannot be a
nonzero constant. By contradiction, if $1\in D^{-\ell ,p},$ it follows from (%
\ref{18}) that $1=\sum_{|\alpha |_{1}=\ell }\partial ^{\alpha }f_{\alpha }$
for some $f_{\alpha }\in L^{p}$ and so $\left| \int_{\Bbb{R}^{N}}\varphi
\right| \leq C||\,|\nabla ^{\ell }\varphi |\,||_{p^{\prime }}$ for every $%
\varphi \in \mathcal{C}_{0}^{\infty },$ where $C>0$ depends only upon the
norms $||f_{\alpha }||_{p}.$ Let $\psi \in \mathcal{C}_{0}^{\infty }$ be
such that $\int_{\Bbb{R}^{N}}\psi =1.$ With $\varphi (x):=\psi (\lambda x)$
and $\lambda >0,$ we get $1=\left| \int_{\Bbb{R}^{N}}\psi \right| \leq
C\lambda ^{\ell +N/p}||\,|\nabla ^{\ell }\psi |\,||_{p^{\prime }}$ with the
same constant $C$ independent of $\lambda $ and a contradiction arises by
letting $\lambda \rightarrow 0.$

If now $u\in D^{-\ell ,p}$ is a nonzero polynomial, some derivative of $u$
is a nonzero constant and this derivative is in $D^{-\widetilde{\ell },p}$
with $\widetilde{\ell }\geq \ell >0,$ which contradicts $1\notin D^{-%
\widetilde{\ell },p}.$
\end{proof}

\begin{remark}
\label{rm3}The exact same line of argument can be used to show that $%
W^{k,p},k\in \Bbb{Z},$ contains no nonzero polynomial.
\end{remark}

\begin{remark}
\label{rm4}Lemma \ref{lm11} may suggest that when $k<0,$ the \emph{functions}
of $D^{k,p}$ continue to be subject to growth limitations at infinity. This
is false. For example, $g_{n}(x):=(1+|x|^{2})^{-N/2}e^{i|x|^{2n}}$ is in $%
L^{p}$ for every $1<p<\infty $ and every $n\in \Bbb{N},$ whence $f_{n}:=%
\partial _{1}g_{n}\in D^{-1,p}$ by (\ref{18}), but $f_{n}\notin M^{s,q}$ for
any prescribed $s$ and $q$ if $n$ is large enough. Nonetheless, depending on 
$N,$ $p$ and $k,$ suitable integrability conditions suffice for membership
to $D^{k,p}$ when $k<0$ (Lemma \ref{lm15}), but this is not always true
(Example \ref{ex8}).
\end{remark}

To give uniform statements for all $k\in \Bbb{Z},$ we henceforth drop the
``dot'' notation $\dot{D}^{k,p}$ when $k\geq 0$ and return to the usual
quotient space notation. Of course, $D^{k,p}/\mathcal{P}_{k-1}=D^{k,p}$ when 
$k\leq 0.$

\begin{theorem}
\label{th12}If $\kappa \in \Bbb{Z}$ and $1<p<\infty ,$ the homogeneous
elliptic operator $A$ in (\ref{1}) is a linear isomorphism from $D^{m+\kappa
,p}/\mathcal{P}_{m+\kappa -1}$ onto $D^{\kappa ,p}/\mathcal{P}_{\kappa -1}$
and a homomorphism of $D^{m+\kappa ,p}$ onto $D^{\kappa ,p}.$ \newline
\end{theorem}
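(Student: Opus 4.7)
The case $\kappa\geq 0$ is already contained in Lemma \ref{lm10}, which establishes that $A$ is an isomorphism between the Banach spaces $\dot D^{m+\kappa,p}=D^{m+\kappa,p}/\mathcal{P}_{m+\kappa-1}$ and $\dot D^{\kappa,p}=D^{\kappa,p}/\mathcal{P}_{\kappa-1}$. The remaining task is to handle $\kappa\leq -1$ (in which case $\mathcal{P}_{\kappa-1}=\{0\}$, so the target quotient is just $D^{\kappa,p}$), and then to promote the quotient-level isomorphism to the ``homomorphism'' statement at the level of the unquotiented spaces.

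Surjectivity for $\kappa=-\ell$ with $\ell\geq 1$ is most naturally produced using the concrete representation (\ref{18}): given $f\in D^{-\ell,p}$, write $f=\sum_{|\beta|_1=\ell}\partial^\beta f_\beta$ with $f_\beta\in L^p$, apply Lemma \ref{lm10} in the base case $\kappa=0$ to pick $u_\beta\in D^{m,p}$ with $Au_\beta=f_\beta$, and set $u:=\sum_{|\beta|_1=\ell}\partial^\beta u_\beta$. Since $A$ has constant coefficients it commutes with each $\partial^\beta$, so $Au=f$. To check $u\in D^{m-\ell,p}$, I would split into the subcases $m-\ell\geq 0$ and $m-\ell<0$: in the first, every derivative $\partial^\gamma u$ of order $m-\ell$ reassembles into a finite combination of order-$m$ derivatives of the $u_\beta$'s, hence lies in $L^p$, so (\ref{3}) applies directly; in the second, split each $\beta$ of length $\ell$ as $\beta'+\gamma$ with $|\gamma|_1=\ell-m$ and regroup to exhibit $u$ as $\sum_{|\gamma|_1=\ell-m}\partial^\gamma w_\gamma$ with $w_\gamma\in L^p$, which places $u$ in $D^{m-\ell,p}$ by (\ref{18}).

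For injectivity, suppose $u\in D^{m-\ell,p}$ with $Au=0$. If $m-\ell\geq 0$ then Theorem \ref{th8} embeds $u$ into $M_0^{m-\ell,p}$ and Corollary \ref{cor5} forces $u\in\mathcal{P}_{m-\ell-1}$. If $m-\ell<0$ then by (\ref{18}) $u$ is a finite sum of distributional derivatives of $L^p$ functions, so $u\in\mathcal{S}^{\prime}$; the Fourier-side argument of Corollary \ref{cor5} then shows $\operatorname{Supp}\widehat{u}\subset\{0\}$, so $u\in\mathcal{P}$, and Lemma \ref{lm11} forces $u\in\mathcal{P}_{m-\ell-1}=\{0\}$. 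With $A$ thus proved a continuous bijection between the Banach quotients, the open mapping theorem upgrades it to a topological isomorphism. The ``homomorphism of $D^{m+\kappa,p}$ onto $D^{\kappa,p}$'' claim is just the unquotiented form of the same statement: for every $f\in D^{\kappa,p}$, choosing a representative modulo $\mathcal{P}_{m+\kappa-1}$ of the quotient preimage yields $u\in D^{m+\kappa,p}$ with $Au=f$ and $\|u\|_{D^{m+\kappa,p}/\mathcal{P}_{m+\kappa-1}}\leq C\|f\|_{D^{\kappa,p}/\mathcal{P}_{\kappa-1}}$. The principal obstacle is the injectivity subcase $m+\kappa<0$, where Corollary \ref{cor5} is not available because $D^{m+\kappa,p}$ carries no a priori growth control (Remark \ref{rm4}); the replacement is the Fourier argument combined with Lemma \ref{lm11}.
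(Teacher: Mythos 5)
Your treatment of the isomorphism part tracks the paper's closely: you use (\ref{18}) to write $f\in D^{-\ell,p}$ as $\sum\partial^\beta f_\beta$, lift each $f_\beta$ via Lemma \ref{lm10}, and commute $A$ past the derivatives; for injectivity you use the Fourier-support argument together with Lemma \ref{lm11}. All of this is correct and is exactly the paper's route.

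The gap is in the last sentence, where you dispose of the homomorphism claim by saying that ``choosing a representative modulo $\mathcal{P}_{m+\kappa-1}$ of the quotient preimage yields $u\in D^{m+\kappa,p}$ with $Au=f$.'' That inference fails precisely when $\kappa>0$: if $A[u]_{m+\kappa-1}=[f]_{\kappa-1}$, a representative $u$ only satisfies $Au=f+\pi$ with some (generally nonzero) $\pi\in\mathcal{P}_{\kappa-1}$, so $Au\ne f$. The paper flags this explicitly right after Lemma \ref{lm10} (``neither Lemma \ref{lm10} nor its proof implies that $A$ maps $D^{m+\kappa,p}$ onto $D^{\kappa,p}$''). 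What is missing from your argument is the additional fact that $A:\mathcal{P}_{m+\kappa-1}\to\mathcal{P}_{\kappa-1}$ is surjective, so that one can pick $\varpi\in\mathcal{P}_{m+\kappa-1}$ with $A\varpi=\pi$ and replace $u$ by $u-\varpi$. The paper proves this by Horv\'ath's count of homogeneous $A$-harmonic polynomials: the space of $A$-harmonic polynomials in $\mathcal{P}_{m+\kappa-1}$ has dimension $\dim\mathcal{P}_{m+\kappa-1}-\dim\mathcal{P}_{\kappa-1}$, whence rank-nullity gives the surjectivity. Without this (or some substitute, such as explicitly constructing a polynomial right inverse using ellipticity), the ``homomorphism onto'' part of the theorem is not established for $\kappa>0$. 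Note also that your opening sentence, that the case $\kappa\geq 0$ ``is already contained in Lemma \ref{lm10},'' is accurate only for the quotient-level isomorphism; it is not accurate for the unquotiented surjectivity, for the same reason.
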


\begin{proof}
We begin with the isomorphism property. Since it was proved in Lemma \ref
{lm10} when $\kappa \geq 0,$ we assume $\kappa <0.$ Note first that $%
D^{k,p}\subset \mathcal{S}^{\prime }$ for every $k\in \Bbb{Z}.$ This follows
for instance from Theorem \ref{th4} (i) and Theorem \ref{th8} if $k\geq 0$
(alternatively, \cite[pp. 244-245]{Sc66} shows that $u\in \mathcal{S}%
^{\prime }$ if and only if all the derivatives of $u$ of some order $k\geq 0$
are in $\mathcal{S}^{\prime }$) and from (\ref{18}) if $k<0.$

By the homogeneity and ellipticity of $A,$ the only solutions $u\in \mathcal{%
S}^{\prime }$ of $Au=0$ are polynomials. This is a simple exercise on
Fourier transform (see the proof of Corollary \ref{cor5}). Consequently, if $%
u\in D^{m+\kappa ,p}$ and $Au=0,$ then $u\in \mathcal{P}$ and so $u\in 
\mathcal{P}_{m+\kappa -1}$ by Lemma \ref{lm11}. Thus, $A$ is one-to-one on $%
D^{m+\kappa ,p}/\mathcal{P}_{m+\kappa -1}.$ Since $\kappa <0$ (hence $%
\mathcal{P}_{\kappa -1}=\{0\}$), it remains to show that $A$ maps $%
D^{m+\kappa ,p}$ onto $D^{\kappa ,p}.$

Set $\kappa =-\ell $ with $\ell \in \Bbb{N},$ so that, by (\ref{18}), every $%
f\in D^{\kappa ,p}=D^{-\ell ,p}$ has the form $f=\sum_{|\alpha |_{1}=\ell
}\partial ^{\alpha }f_{\alpha }$ for some $f_{\alpha }\in L^{p}.$ By Lemma 
\ref{lm10}, there is $v_{\alpha }\in D^{m,p}$ such that $Av_{\alpha
}=f_{\alpha }.$ Thus, if $u:=\sum_{|\alpha |_{1}=\ell }\partial ^{\alpha
}v_{\alpha },$ then $u\in D^{m-\ell ,p}=D^{m+\kappa ,p}$ and $Au=f.$ This
completes the proof that $A$ is an isomorphism of $D^{m+\kappa ,p}/\mathcal{P%
}_{m+\kappa -1}$ onto $D^{\kappa ,p}/\mathcal{P}_{\kappa -1}$ for every $%
\kappa \in \Bbb{Z}.$

We now prove that $A$ maps $D^{m+\kappa ,p}$ onto $D^{\kappa ,p}.$ This was
just done above when $\kappa \leq 0.$ If $\kappa >0$ and $f\in D^{\kappa
,p}, $ the first part of the proof (or Lemma \ref{lm10}) ensures that there
are $\pi \in \mathcal{P}_{\kappa -1}$ and $u\in D^{m+\kappa -1,p}$ such that 
$Au=f+\pi .$ Thus, it suffices to show that there is $\varpi \in \mathcal{P}
_{m+\kappa -1}$ such that $A\varpi =\pi ,$ for then $u-\varpi \in
D^{m+\kappa ,p}$ and $A(u-\varpi )=f.$

The dimension of the space of homogeneous $A$-harmonic polynomials of degree 
$\ell ,$ as calculated by Horv\'{a}th \cite{Hor58}, is $\nu (\ell ,N)-\nu
(\ell -m,N)$ with $\nu $ from (\ref{17}), where $\nu (\ell -m,N):=0$ if $%
\ell <m.$ Thus, the subspace of $A$-harmonic polynomials in $\mathcal{P}
_{m+\kappa -1}$ has dimension $\sum_{\ell =0}^{m+\kappa -1}\nu (\ell
,N)-\sum_{\ell =0}^{\kappa -1}\nu (\ell ,N).$ Since $\nu (\ell ,N)$ is also
the dimension of the space of homogeneous polynomials of degree $\ell ,$
this is just $\dim \mathcal{P}_{m+\kappa -1}-\dim \mathcal{P}_{\kappa -1}.$
As a result, the rank of $A:\mathcal{P}_{m+\kappa -1}\rightarrow \mathcal{P}
_{\kappa -1}$ is $\dim \mathcal{P}_{\kappa -1}.$ Thus, $A$ maps $\mathcal{P}
_{m+\kappa -1}$ onto $\mathcal{P}_{\kappa -1}$ and the proof is complete.
\end{proof}

In Theorem \ref{th12}, the isomorphism property amounts to the generalized
Calderon-Zygmund inequality (the reverse inequality is trivial) 
\begin{equation*}
||\,[u]_{m+\kappa -1}||_{D^{m+\kappa ,p}/\mathcal{P}_{m+\kappa -1}}\leq
C||[Au]_{\kappa -1}\,||_{D^{\kappa ,p}/\mathcal{P}_{\kappa -1}},
\end{equation*}
for $u\in D^{m+\kappa ,p}.$

We can now prove a sharper and more general form of Theorem \ref{thA}.

\begin{theorem}
\label{th13}Let $A$ denote the homogeneous elliptic operator (\ref{1}). If $%
u\in \mathcal{D}^{\prime }$ and $Au\in D^{\kappa ,p}$ for some integer $%
\kappa \geq -m$ and $1<p<\infty ,$ the following properties are equivalent:%
\newline
(i) $u\in D^{m+\kappa ,p}.$\newline
(ii) $u\in M_{0}^{s,p}$ for every $s>m+\kappa -N/p$ if $p>N$ and every $%
s>m+\kappa -1$ if $p\leq N.$\newline
(iii) $u\in M_{0}^{m+\kappa ,1}.$
\end{theorem}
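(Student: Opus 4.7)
The plan is to prove the cyclic chain $(i)\Rightarrow(ii)\Rightarrow(iii)\Rightarrow(i)$. The first two implications are essentially bookkeeping from the embedding results established earlier; the substance is in $(iii)\Rightarrow(i)$, which combines the surjectivity half of Theorem~\ref{th12} with the Liouville-type Corollary~\ref{cor5}.

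For $(i)\Rightarrow(ii)$, I would apply Theorem~\ref{th8} directly with $k=m+\kappa\geq 0$: when $p>N$ this yields $u\in M_{0}^{s,p}$ for every $s>m+\kappa-N/p$, and when $p\leq N$ it gives $u\in M_{0}^{s,p}$ for every $s>m+\kappa-1$. For $(ii)\Rightarrow(iii)$, both bounds admit the specific choice $s=m+\kappa$ (since $N/p>0$ and $1>0$), so $u\in M_{0}^{m+\kappa,p}$; then $M_{0}^{m+\kappa,p}\hookrightarrow M_{0}^{m+\kappa,1}$ by Theorem~\ref{th1}(ii) (smaller $q$ enlarges the space), giving $u\in M_{0}^{m+\kappa,1}$.

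The core step is $(iii)\Rightarrow(i)$. By the surjectivity portion of Theorem~\ref{th12}, since $Au\in D^{\kappa,p}$ there exists $v\in D^{m+\kappa,p}$ with $Av=Au$, hence $A(u-v)=0$ as distributions. Applying the already-established $(i)\Rightarrow(ii)\Rightarrow(iii)$ chain to $v$ gives $v\in M_{0}^{m+\kappa,1}$, and combined with the hypothesis one obtains $u-v\in M_{0}^{m+\kappa,1}$. Since $A$ is homogeneous elliptic, so that $A(\xi)\neq 0$ for $\xi\in\Bbb{R}^{N}\backslash\{0\}$, Corollary~\ref{cor5} (applied with $B=A$ and $B(u-v)=0\in\mathcal{P}$) forces $u-v$ to be a polynomial of degree strictly less than $m+\kappa$, i.e., $u-v\in\mathcal{P}_{m+\kappa-1}$. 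Consequently $\partial^{\alpha}u=\partial^{\alpha}v\in L^{p}$ for all multi-indices with $|\alpha|_{1}=m+\kappa$, which is precisely $u\in D^{m+\kappa,p}$.

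The main subtlety, though easily dispatched, is that when $\kappa<0$ elements of $D^{\kappa,p}$ need not be functions, so the equality $Av=Au$ must be read in $D^{\kappa,p}\subset\mathcal{D}^{\prime}$. However, since $m+\kappa\geq 0$, both $v$ and $u-v$ are locally integrable by definition (\ref{3}), so the $M_{0}^{s,q}$ scale applies to $u-v$ and Corollary~\ref{cor5} can be invoked without further ado.
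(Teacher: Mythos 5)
Your proof is correct and follows essentially the same route as the paper: Theorem~\ref{th8} for $(i)\Rightarrow(ii)$, the trivial specialization for $(ii)\Rightarrow(iii)$, and for $(iii)\Rightarrow(i)$ the surjectivity in Theorem~\ref{th12} to produce $v\in D^{m+\kappa,p}$ with $Av=Au$, the already-proved chain applied to $v$ to get $u-v\in M_{0}^{m+\kappa,1}$, and Corollary~\ref{cor5} to conclude $u-v\in\mathcal{P}_{m+\kappa-1}$. The closing remark about $u-v$ being locally integrable (so that the $M^{s,q}$ scale applies) is a reasonable sanity check, and is automatic here because $u\in M_{0}^{m+\kappa,1}\subset L_{loc}^{1}$ by definition and $v\in D^{m+\kappa,p}\subset L_{loc}^{1}$ since $m+\kappa\geq 0$.
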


\begin{proof}
(i) $\Rightarrow $ (ii) by Theorem \ref{th8}.

(ii) $\Rightarrow $ (iii) by letting $s=m+\kappa $ in (ii) and by using $%
M_{0}^{m+\kappa ,p}\subset M_{0}^{m+\kappa ,1}.$

(iii) $\Rightarrow $ (i). Assume $u\in M_{0}^{m+\kappa ,1}$ and $Au\in
D^{\kappa ,p}.$ By Theorem \ref{th12}, there is $v\in D^{m+\kappa ,p}$ such
that $Av=Au$ and, by (i) $\Rightarrow $ (iii) already proved above, $v\in
M_{0}^{m+\kappa ,1}.$ Hence, $u-v\in M_{0}^{m+\kappa ,1}.$ Since $A(u-v)=0,$
Corollary \ref{cor5} yields $u-v\in \mathcal{P}_{m+\kappa -1},$ so that $%
u=v+(u-v)\in D^{m+\kappa ,p}.$
\end{proof}

A straightforward corollary of Theorem \ref{th13} addresses the same issue
when $D^{m+\kappa ,p}$ is replaced with $W^{m+\kappa ,p}.$

\begin{corollary}
\label{cor14}Let $A$ denote the homogeneous elliptic operator (\ref{1}) and
let $\kappa \geq -m$ be an integer. Then, $u\in W^{m+\kappa ,p}$ if and only
if \newline
(i) $Au\in D^{\kappa ,p}$ and $u\in L^{p}$\newline
or\newline
(ii) $Au\in D^{\kappa ,p}\cap D^{-m,p}$ and $u\in M_{0}^{0,1}.$\newline
In particular, if $m<N,Au\in D^{\kappa ,p}\cap L^{Np/(N+mp)}$ with $%
N/(N-m)<p<\infty $ and $u\in M_{0}^{0,1},$ then $u\in W^{m+\kappa ,p}.$
\end{corollary}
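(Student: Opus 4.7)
The plan is to prove that each of (i) and (ii) is equivalent to $u\in W^{m+\kappa,p}$ and then deduce the ``in particular'' statement by Sobolev duality. The engine for both equivalences is Theorem \ref{th13}; the bridge between the homogeneous conclusion $u\in D^{m+\kappa,p}$ and the inhomogeneous conclusion $u\in W^{m+\kappa,p}$ is the classical Gagliardo--Nirenberg intermediate derivative inequality, which for $1<p<\infty$ interpolates $\nabla^j u\in L^p$ from $u\in L^p$ and $\nabla^{m+\kappa}u\in L^p$, for all $0\leq j\leq m+\kappa$.

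Starting with (i), necessity is direct: $u\in W^{m+\kappa,p}$ forces $u\in L^p$ (since $m+\kappa\geq 0$), while for $\kappa\geq 0$ each $\partial^\beta Au$ with $|\beta|_1=\kappa$ is a combination of derivatives of $u$ of total order $m+\kappa$ in $L^p$, and for $\kappa<0$ each $\partial^\alpha u$ in $Au$ is an order-$|\kappa|$ derivative of a function in $L^p$, so $Au\in D^{\kappa,p}$ in either case (using (\ref{18}) in the second). Conversely, if (i) holds, then $u\in L^p$ implies $u\in M_0^{0,1}\subset M_0^{m+\kappa,1}$ (the first embedding via $||u||_{1,B_R}\leq |B_R|^{1/p'}||u||_p$), so Theorem \ref{th13} yields $u\in D^{m+\kappa,p}$, and the Gagliardo--Nirenberg inequality completes $u\in L^p\cap D^{m+\kappa,p}\subset W^{m+\kappa,p}$.

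For (ii), necessity proceeds as above, with the additional remark that $Au$ is a sum of order-$m$ derivatives of $u\in L^p$, so $Au\in D^{-m,p}$ by (\ref{18}). For the converse, apply Theorem \ref{th13} twice. First, with the given $\kappa$: the hypotheses $u\in M_0^{0,1}\subset M_0^{m+\kappa,1}$ and $Au\in D^{\kappa,p}$ give $u\in D^{m+\kappa,p}$. Second, with $\kappa$ replaced by $-m$ (so the target order is $m+(-m)=0$): the hypotheses $u\in M_0^{0,1}$ and $Au\in D^{-m,p}$ give $u\in D^{0,p}=L^p$. Gagliardo--Nirenberg then upgrades $u\in L^p\cap D^{m+\kappa,p}$ to $u\in W^{m+\kappa,p}$.

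For the final assertion, the assumptions $m<N$ and $p>N/(N-m)$ translate into $mp'<N$, so the Sobolev embedding $\dot{D}^{m,p'}\hookrightarrow L^{q'}$ with $1/q'=1/p'-m/N$ is available. Taking adjoints (equivalently, using the $\int fu$-duality on $\mathcal{C}_0^\infty$, which is dense in both spaces by Remark \ref{rm2}) produces a continuous embedding $L^{Np/(N+mp)}\hookrightarrow D^{-m,p}$, so $Au\in L^{Np/(N+mp)}$ forces $Au\in D^{-m,p}$ and the hypotheses of case (ii) are met. The only genuinely delicate point is the double application of Theorem \ref{th13} at two different orders in case (ii), needed to recover the top-order and zeroth-order information simultaneously; once this is arranged, the rest reduces to standard embeddings and interpolation.
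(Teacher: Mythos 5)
Your proof is correct and follows essentially the same route as the paper: both deduce $u\in D^{m+\kappa,p}$ from Theorem~\ref{th13} using $L^p\subset M_0^{m+\kappa,1}$, recover $u\in L^p$ in case (ii) by a second application of Theorem~\ref{th13} with $\kappa=-m$, then pass from $L^p\cap D^{m+\kappa,p}$ to $W^{m+\kappa,p}$ via intermediate-derivative interpolation (you invoke Gagliardo--Nirenberg; the paper cites \cite[Theorem 4.10, p.~337]{BeSh88}), and obtain the ``in particular'' part through the same Sobolev-duality embedding $L^{Np/(N+mp)}\hookrightarrow D^{-m,p}$ that the paper isolates as Lemma~\ref{lm15}.
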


\begin{proof}
In both (i) and (ii), the necessity is trivial and the ``in particular''
part follows from (ii) and Lemma \ref{lm15} below (with an independent
proof). To prove the sufficiency of (i), just use $L^{p}=M^{-N/p,p}\subset
M_{0}^{m+\kappa ,1}$ since $m+\kappa \geq 0>-N/p$ to get $u\in D^{m+\kappa
,p}$ by (iii) $\Rightarrow $ (i) of Theorem \ref{th13}. Thus, $u\in
L^{p}\cap D^{m+\kappa ,p}=W^{m+\kappa ,p}$ (\cite[Theorem 4.10, p. 337]
{BeSh88}).

In (ii), use (iii) $\Rightarrow $ (i) of Theorem \ref{th13} with $\kappa =-m$
to get $u\in L^{p},$ so that the result follows from (i).
\end{proof}

Part (i) is trivial if $\kappa =-m,$ or if $-m<\kappa \leq 0$ and $A-z$ is
an isomorphism of $W^{m+\kappa ,p}$ to $W^{\kappa ,p}$ for some $z\in \Bbb{C}
$ (e.g., $A=\Delta $ or, more generally, $A$ strongly elliptic), but the
latter can only happen if $A(\xi )-z\neq 0$ for every $\xi \in \Bbb{R}^{N},$
which need not hold for any $z$ if $A$ is merely elliptic. The simplest
counter-examples are given by the powers $\overline{\partial }^{m}$ of the
Cauchy-Riemann operator $\overline{\partial }$ when $N=2.$ On the other
hand, if $A$ is strongly elliptic, part (i) remains true with $D^{\kappa ,p}$
replaced with $W^{\kappa ,p},$ which is more general when $\kappa <0.$

When $\kappa >0,$ part (i) does not follow right away from classical
elliptic theory even if $A=\Delta .$ Indeed, if $\Delta u=f\in D^{\kappa ,p}$
and $u\in L^{p},$ then $\Delta u-u=f-u$ but since (\textit{a priori}) $f-u$
need not be in $W^{\kappa ,p},$ the regularity properties of $\Delta -1$ do
not yield $u\in W^{m+\kappa ,p}.$ In fact, they do, but only with extra work
(differentiation and bootstrapping; details are left to the reader) and the
result cannot be called well-known.

Since part (ii) involves the space $M_{0}^{0,1},$ it is new irrespective of $%
A.$ Recall that $u\in M_{0}^{0,1}$ is much more general than the necessary $%
u\in L^{p};$ see the examples of Section \ref{spaces}, but more information
than just $Au\in D^{\kappa ,p}$ is needed. A nonstandard example of (i) and
(ii) with $N=2$ and $m=1$ is that $u\in W^{1,p}$ if either $u\in L^{p}$ and $%
\overline{\partial }u\in L^{p}$ with $1<p<\infty ,$ or $u\in M_{0}^{0,1}$
and $\overline{\partial }u\in L^{p}\cap L^{2p/(p+2)}$ with $2<p<\infty .$ Of
course, $u=0$ if $\overline{\partial }u=0$ in both cases, consistent with
Corollary \ref{cor5}. 

We complete this section with four very different applications of Theorem 
\ref{th13}. We begin with a ``consistency'' property. These properties are
very useful, but not trivial in scales of spaces which are not ordered by
inclusion.

\begin{example}
\label{ex5} Suppose that $u\in D^{m+\kappa _{1},p_{1}}$ for some $\kappa
_{1}\in \Bbb{Z}$ and some $1<p_{1}<\infty .$ If $Au\in D^{\kappa _{2},p_{2}}$
for some $\kappa _{2}\geq \kappa _{1}$ and some $1<p_{2}<\infty ,$ then $%
u\in D^{m+\kappa _{2},p_{2}}.$ If $\kappa _{1}\geq -m,$ this follows at once
from (i) $\Leftrightarrow $ (iii) in Theorem \ref{th13} and from $%
M_{0}^{m+\kappa _{1},1}\subset M_{0}^{m+\kappa _{2},1}.$ If $\kappa _{1}<-m,$
choose $\ell \in \Bbb{N}$ such that $\kappa _{1}\geq -\ell m.$ By Theorem 
\ref{th12} for $A^{\ell -1},$ there is $v\in D^{\ell m+\kappa _{1},p_{1}}$
such that $A^{\ell -1}v=u.$ Hence, $A^{\ell }v=Au\in D^{\kappa _{2},p_{2}}.$
Since $\kappa _{2}\geq \kappa _{1}\geq -\ell m,$ the first step yields $v\in
D^{\ell m+\kappa _{2},p_{2}},$ whence $u=A^{\ell -1}v\in D^{m+\kappa
_{2},p_{2}}.$ This argument shows that a general result may be useful even
if a single operator is of interest.
\end{example}

When $\kappa \geq 0,$ the use of Theorem \ref{th13} is simplified in
problems $Au=G(u):$

\begin{example}
\label{ex6}Let $G:\Bbb{C}\rightarrow \Bbb{C}$ be a continuous function such
that $\lim_{|z|\rightarrow \infty }|G(z)|=\infty .$ If $u\in L_{loc}^{1}$
and $Au=G(u)\in D^{\kappa ,p}$ for some $\kappa \in \Bbb{N}_{0}$ and $1<p<%
\infty ,$ then $u\in D^{m+\kappa ,p}.$ By Theorem \ref{th13}, it suffices to
prove that $u\in M_{0}^{m+\kappa ,1}.$ In fact, we claim that $u\in
M^{\kappa ,1},$ which is stronger since $m>0.$ To see this, let $g:[0,\infty %
)\rightarrow [0,\infty )$ be defined by $g(t):=\min_{\theta \in [0,2\pi %
]}|G(te^{i\theta })|.$ Then, $g\geq 0$ is continuous and $\lim_{t\rightarrow 
\infty }g(t)=\infty .$ Let $h\geq 0$ be a continuous function on $[0,\infty )
$ such that $h\leq g,h(0)=0$ and $\lim_{t\rightarrow \infty }h(t)=\infty .$
The existence of $h$ is not an issue. Then, the convex hull $\Phi $ of $h$
is a Young function.\newline
By Theorem \ref{th8}, $G(u)\in D^{\kappa ,p}\subset M_{0}^{\kappa ,p}\subset
M_{0}^{\kappa ,1}.$ Since $0\leq \Phi (|u|)\leq |G(u)|,$ we infer that $\Phi
(|u|)\in M_{0}^{\kappa ,1}.$ Let $t_{0}\geq 0$ and $\lambda >0$ be chosen as
in Example \ref{ex2} when $q=1,$ so that $t\leq \Phi (\lambda t)$ if $t\geq %
t_{0}.$ Equivalently, $t\leq \lambda \Phi (t)$ if $t\geq \lambda t_{0}.$
Thus, $|u|\leq \lambda t_{0}+\lambda \Phi (|u|)\in M^{\kappa ,1},$ whence $%
u\in M^{\kappa ,1}$ and even $u\in M_{0}^{\kappa ,1}$ if $\kappa >0$
(because $\lambda t_{0}\in M^{0,1}\subset M_{0}^{\kappa ,1}$) or if $1\notin
L_{\Phi }$ (because $\lambda t_{0}>0$ can be chosen arbitrarily small if $%
1\notin L_{\Phi };$ see part (iv) of Example \ref{ex2}).
\end{example}

It is easy to generalize Example \ref{ex6} when $G=G(x,u,\nabla u,...,\nabla
^{k}u)$ (possibly $k>m$) and there is a Young function $\Phi $ such that $%
|G(x,u,\nabla u,...,\nabla ^{k}u)|\geq \Phi (|\nabla ^{j}u|)$ for some $%
0\leq j\leq m-1.$ Indeed, by the same argument, $\Phi (|\nabla ^{j}u|)\in
M_{0}^{\kappa ,1}$ implies $|\nabla ^{j}u|\in M^{\kappa ,1}$ and then $u\in
M^{j+\kappa ,1}\subset M_{0}^{m+\kappa ,1}$ by Theorem \ref{th7}. This also
works if $j=m$ and either $\kappa >0$ or $1\notin L_{\Phi },$ for then $%
|\nabla ^{m}u|\in M_{0}^{\kappa ,1}$ and so $u\in M_{0}^{m+\kappa ,1}$ by
Theorem \ref{th7}. If $N>1,$ this is not applicable when $G$ is linear in $%
(u,\nabla u,...,\nabla ^{k}u),$ unless $k=0.$

The next example shows how the properties of the spaces $M^{s,q},$ notably
Theorems \ref{th2}, \ref{th7} and \ref{th8}, can be combined with Theorem 
\ref{th13} to convert growth assumptions on the coefficients into regularity
results for the solutions. Similar issues have been discussed by various
authors; see \cite{MePaRaSc10}, \cite{Ra05} and the references therein, but
it is safe to say that the results given in Example \ref{ex7} below cannot
be proved by previously known arguments.

The following equivalent dual form of Sobolev's inequality will be useful.

\begin{lemma}
\label{lm15}If $k<N$ is a positive integer and if $N/(N-k)<p<\infty ,$ then $%
Np/(N+kp)>1$ and $L^{Np/(N+kp)}\hookrightarrow D^{-k,p}$ (dense embedding).
\end{lemma}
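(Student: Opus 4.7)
The plan is to recognize $L^{Np/(N+kp)}$ as the predual of $L^{(p')^{*}}$, where $(p')^{*}$ is the Sobolev conjugate of $p'$, and then to dualize the classical Gagliardo--Nirenberg--Sobolev inequality $\|\varphi\|_{(p')^{*}}\leq C\|\,|\nabla^{k}\varphi|\,\|_{p'}$ on $\mathcal{C}_{0}^{\infty}.$ First I would check the arithmetic. The hypothesis $p>N/(N-k)$ rewrites as $p(N-k)>N,$ which simultaneously yields $Np/(N+kp)>1$ and, under Hölder conjugation, $1/p'=(p-1)/p>k/N,$ i.e.\ $p'<N/k$; combined with $p<\infty \Leftrightarrow p'>1,$ this places $p'$ in the Sobolev range. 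A direct computation gives $1/(p')^{*}=(p-1)/p-k/N=(Np-N-kp)/(Np),$ whose Hölder conjugate is exactly $Np/(N+kp).$

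The main duality step is as follows: for $g\in L^{Np/(N+kp)}$ and $\varphi\in \mathcal{C}_{0}^{\infty},$ Hölder and Sobolev give
\[
\Big|\int_{\mathbb{R}^{N}} g\varphi\Big|\leq \|g\|_{Np/(N+kp)}\|\varphi\|_{(p')^{*}}\leq C\|g\|_{Np/(N+kp)}\|\,|\nabla^{k}\varphi|\,\|_{p'}=C\|g\|_{Np/(N+kp)}\|[\varphi]_{k-1}\|_{\dot D^{k,p'}}.
\]
Since $\mathcal{C}_{0}^{\infty}$ is dense in $\dot D^{k,p'}$ by Remark~\ref{rm2}, the functional $[\varphi]_{k-1}\mapsto \int g\varphi$ extends uniquely to a bounded linear form $T_{g}$ on $\dot D^{k,p'},$ i.e.\ an element of $D^{-k,p}=(\dot D^{k,p'})^{*},$ with $\|T_{g}\|_{D^{-k,p}}\leq C\|g\|_{Np/(N+kp)}.$ The correspondence $g\mapsto T_{g}$ agrees with the canonical distributional inclusion (its action on test functions is pairing against $g$), so it is injective.

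For density I would use the representation (\ref{18}): every $f\in D^{-k,p}$ is $f=\sum_{|\alpha|_{1}=k}\partial^{\alpha}f_{\alpha}$ with $f_{\alpha}\in L^{p},$ and each $\partial^{\alpha}\colon L^{p}\to D^{-k,p}$ is continuous with norm $\leq 1.$ Approximating each $f_{\alpha}$ in $L^{p}$ by $\varphi_{\alpha}^{(n)}\in \mathcal{C}_{0}^{\infty}$ produces $f^{(n)}:=\sum_{\alpha}\partial^{\alpha}\varphi_{\alpha}^{(n)}\in \mathcal{C}_{0}^{\infty}\subset L^{Np/(N+kp)}$ with $f^{(n)}\to f$ in $D^{-k,p},$ giving density. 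The only delicate point is making sure the dualization is well posed: one might worry about pairing $g\in L^{Np/(N+kp)}$ with a generic $u\in D^{k,p'}$ (which is not an $L^{(p')^{*}}$ function, only agrees with one modulo $\mathcal{P}_{k-1}$). The device is to pair $g$ against $\mathcal{C}_{0}^{\infty}$ only and extend by Remark~\ref{rm2}, sidestepping the Sobolev representative entirely; after that the rest is routine.
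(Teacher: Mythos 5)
Your proof is correct and rests on the same core idea as the paper's: dualize the Sobolev inequality $\|\varphi\|_{(p')^{*}}\leq C\|\,|\nabla^{k}\varphi|\,\|_{p'}.$ The arithmetic identifying $p>N/(N-k)$ with $p'<N/k$ and $(p')^{*}$ with the conjugate of $Np/(N+kp)$ is all right, and the Hölder--Sobolev chain followed by extension from $\mathcal{C}_{0}^{\infty}$ via Remark~\ref{rm2} is sound.

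Where you diverge from the paper is in the bookkeeping. The paper first packages Sobolev as a Banach-space embedding $\dot D^{k,q}\hookrightarrow L^{q^{*k}}$ (taking care, in a footnote, to say the map is $[u]_{k-1}\mapsto u-\pi_u$ for the canonical polynomial $\pi_u$), observes that this embedding has dense range because $\mathcal{C}_0^\infty$ is dense in $L^{q^{*k}}$, and then invokes the general fact that the adjoint of a dense injection is an injection, with density of the adjoint's range following from reflexivity of $\dot D^{k,q}$. You instead construct the map $g\mapsto T_g$ by hand, pairing only against test functions and extending by density in $\dot D^{k,p'}$ --- a clean device that, as you note, sidesteps the choice of a Sobolev representative altogether --- and you prove density of the range of $g\mapsto T_g$ directly from the representation~(\ref{18}) by smooth approximation of the $L^p$ coefficients $f_\alpha$. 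Your route is more elementary and self-contained (no appeal to reflexivity and no need to explain the footnoted identification of equivalence classes with functions); the paper's route is shorter because it reuses standard duality machinery. Both are valid, and your density argument via~(\ref{18}) is airtight: the $D^{-k,p}$ norm of $f-\sum_\alpha\partial^\alpha\varphi_\alpha^{(n)}$ is controlled by $\|(\sum_\alpha|f_\alpha-\varphi_\alpha^{(n)}|^2)^{1/2}\|_p,$ which goes to $0.$
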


\begin{proof}
Let $q>1$ be such that $kq<N.$ By Sobolev's inequality, there is a constant $%
C>0$ independent of $\varphi \in \mathcal{C}_{0}^{\infty }$ such that $%
||\varphi ||_{q^{*k}}\leq C||\,|\nabla ^{k}\varphi |\,||_{q},$ where $%
q^{*k}:=Nq/(N-kq).$ By the denseness of $\mathcal{C}_{0}^{\infty }$ in $\dot{
D}^{k,q}$ (Remark \ref{rm2}), this yields the embedding\footnote{%
Explicitly, this embedding is given by $[u]_{k-1}\mapsto u-\pi _{u},$ where $%
\pi _{u}\in \mathcal{P}_{k-1}$ is the only polynomial such that $u-\pi
_{u}\in L^{q^{*k}};$ clearly, $u-\pi _{u}$ is independent of
the representative $u.$} $\dot{D}^{k,q}\hookrightarrow L^{q^{*k}}.$ Since $%
\mathcal{C}_{0}^{\infty }\subset \dot{D}^{k,q}$ is dense in $L^{q^{*k}},$
this embedding is dense and so, by duality, $L^{(q^{*k})^{\prime
}}\hookrightarrow (\dot{D}^{k,q})^{*}=D^{-k,q^{\prime }}.$ The embedding is
dense since $\dot{D}^{k,q}$ is reflexive. Since $(q^{*k})^{\prime
}=Nq/((N+k)q-N),$ the result follows by letting $q=p^{\prime }.$
\end{proof}

\begin{example}
\label{ex7}All the functions are real-valued. Consider the problem $-\nabla 
\cdot (a\nabla u)+cu=f$ on $\Bbb{R}^{N},$ where $a,c>0$ are $\mathcal{C}^{%
\infty }$ (for simplicity) and satisfy the conditions (i) $a^{-1/2}\in D^{1,%
\infty }$ and (ii) $a^{-1}c^{1/2}\in L^{\infty }.$ \newline
Neither $a$ nor $c$ needs to be bounded or bounded below by a positive
constant but, since $a^{-1/2}\in M^{1,\infty }$ by (i) and Theorem \ref{th8}
(i), $a(x)$ cannot decay (pointwise) faster than $|x|^{-2}$ at infinity. The
function $c$ can decay arbitrarily fast but, by (ii), it cannot grow faster
than $a^{2}.$ \newline
It is readily checked that the space $V:=\{u\in \mathcal{D}^{\prime
}:a^{1/2}\nabla u\in (L^{2})^{N},c^{1/2}u\in L^{2}\}$ is a Hilbert space for
the inner product $\int_{\Bbb{R}^{N}}a\nabla u\cdot \nabla v+\int_{\Bbb{R}%
^{N}}cuv.$ Hence, if $f\in L^{2}(\Bbb{R}^{N};c^{-1/2}dx),$ which is
henceforth assumed, there is a unique $u\in V$ such that $-\nabla \cdot %
(a\nabla u)+cu=f.$ Equivalently, 
\begin{equation}
-\Delta u=(a^{-1}\nabla a)\cdot \nabla u-a^{-1}cu+a^{-1}f.  \label{19}
\end{equation}
The right-hand side is in $L^{2}.$ Indeed, $(a^{-1}\nabla a)\cdot \nabla
u=(a^{-3/2}\nabla a)\cdot (a^{1/2}\nabla u)\in L^{2}$ by (i) since $u\in V$
and $a^{-1}cu=a^{-1}c^{1/2}(c^{1/2}u)\in L^{2}$ and $%
a^{-1}f=a^{-1}c^{1/2}(c^{-1/2}f)\in L^{2}$ by (ii) since $u\in V$ and since $%
c^{-1/2}f\in L^{2}.$ We claim that $u\in D^{2,2}.$ By Theorem \ref{th13}, it
suffices to show that $u\in M_{0}^{2,1}.$ As noted above, $a^{-1/2}\in M^{1,%
\infty }.$ Since $a^{1/2}\nabla u\in (L^{2})^{N}$ and $L^{2}=M^{-N/2,2},$ it
follows from Theorem \ref{th2} that $\nabla u=(a^{-1/2})a^{1/2}\nabla u\in
(M^{1-N/2,2})^{N}\subset (M^{1/2,1})^{N}.$ By Theorem \ref{th7}, $u\in
M^{3/2,1}\subset M_{0}^{2,1}.$\newline
\qquad Assume now $N>2$ and replace (i) and (ii) with (i') $a^{-1/2}\in
D^{1,N}$ and (ii') $a^{-1}c^{1/2}\in L^{N}.$ By using $u\in V$ and (i'), $%
(a^{-1}\nabla a)\cdot \nabla u=(a^{-3/2}\nabla a)\cdot (a^{1/2}\nabla u)\in
L^{2N/(N+2)}.$ By Lemma \ref{lm15} (with $k=1,p=2$ and since $N/(N-1)<2$
when $N>2$), it follows that $(a^{-1}\nabla a)\cdot \nabla u\in D^{-1,2}.$
Likewise, by using $u\in V$ and (ii'), $a^{-1}cu\in D^{-1,2}$ and $%
a^{-1}f\in D^{-1,2}.$ Thus, the right-hand side of (\ref{19}) is in $%
D^{-1,2}.$ We claim that $u\in M_{0}^{1,1},$ so that $u\in D^{1,2}$ by
Theorem \ref{th13}. First, $a^{-1/2}\in M_{0}^{s,N}$ for every $s>0$ by
Theorem \ref{th8} (ii). In particular, $a^{-1/2}\in M_{0}^{N/2,N}.$ Next, $%
a^{1/2}\nabla u\in (L^{2})^{N}=(M^{-N/2,2})^{N}.$ Hence, by Theorem \ref{th2}%
, $\nabla u=a^{-1/2}(a^{1/2}\nabla u)\in (M_{0}^{0,2N/(N+2)})^{N}\subset
(M_{0}^{0,1})^{N}$ and so $u\in M_{0}^{1,1}$ by Theorem \ref{th7}.\newline
Let $2^{*}:=2N/(N-2)$ (recall $N>2$). By the Sobolev inequality, $\nabla
u-U_{\infty }\in (L^{2^{*}})^{N}$ with $U_{\infty }\in \Bbb{R}^{N}$ if (i)
and (ii) hold and $u-u_{\infty }\in L^{2^{*}}$ with $u_{\infty }\in \Bbb{R}$
if (i') and (ii') hold. In particular, $U_{\infty }=0$ if (i), (ii), (i')
and (ii') hold (because $u\in D^{1,2}$) and so $u\in D^{1,2^{*}}.$ With
this, it is not hard to find further conditions on $a,c$ and $f$ (compatible
with previous assumptions) ensuring that the right-hand side of (\ref{19})
is in $L^{2^{*}}.$ Then, $u\in D^{2,2^{*}}$ by Theorem \ref{th13} since $%
u\in M_{0}^{2,1}$ is already known. In turn, this implies $u-u_{\infty }\in
D^{2,2^{*}}\cap L^{2^{*}}=W^{2,2^{*}}.$
\end{example}

By a scaling argument, $\mathcal{C}_{0}^{\infty }\not\subset D^{-k,p}$ if $k%
\geq N$ and $1<p<\infty $ or $0<k<N$ and $1<p<N/(N-k).$ Example \ref{ex8}
below shows that the latter result, extended to the optimal\footnote{%
Since we did not define $D^{-k,p}$ when $p=1.$} range $1<p\leq N/(N-k),$ can
be derived from Theorem \ref{th13}.

\begin{example}
\label{ex8}Let $u$ be a smooth function equal to $|x|^{2-N}$ for $|x|$ large
enough if $N>2,$ or equal to $\log |x|$ for $|x|$ large enough if $N=2.$
Then, $u\in D^{1,p}$ with $p>N/(N-1)$ and so $u\in M_{0}^{1,p}\subset
M_{0}^{1,1}$ by Theorem \ref{th8}. Therefore, $\Delta u\notin D^{-1,p}$ if $%
1<p\leq N/(N-1),$ for otherwise $u\in D^{1,p}$ by Theorem \ref{th13}, which
is obviously false. Since $\Delta u\in \mathcal{C}_{0}^{\infty },$ this
shows that $\mathcal{C}_{0}^{\infty }\not\subset D^{-1,p}$ if $N>1$ and $1<p%
\leq N/(N-1).$ Likewise, $\mathcal{C}_{0}^{\infty }\not\subset D^{-2,p}$ if 
$N>2$ and $1<p\leq N/(N-2)$ because $u\in L^{p}$ if and only if $p>N/(N-2).$
More generally, $\mathcal{C}_{0}^{\infty }\not\subset D^{-k,p}$ when $N>k>0$
and $1<p\leq N/(N-k)$ can be seen by using the function $|x|^{2\ell -N}$ and
the operator $\Delta ^{\ell }$ with $\ell =k/2$ when $k$ is even or $\ell
=(k+1)/2$ when $k$ is odd. By Lemma \ref{lm15}, these non-embeddings are
sharp.
\end{example}

\section{Exterior domains\label{exterior}}

In this section, $\Omega \subset \Bbb{R}^{N}$ is an exterior domain (i.e., $%
\Bbb{R}^{N}\backslash \Omega $ is compact). To fix ideas, we shall also
assume that $0\notin \overline{\Omega }.$ In particular, $\Omega \neq \Bbb{R}%
^{N}$ but also $\Omega \neq \Bbb{R}^{N}\backslash \{0\}.$ We shall extend
Theorem \ref{th13} to this setting, but unexpected necessary restrictions on 
$N$ and $p$ arise when $\kappa <0,$ which are not needed when $\Omega =\Bbb{R%
}^{N}.$

If $k\in \Bbb{N}_{0}$ and $1<p<\infty ,$ the homogeneous Sobolev space $%
D^{k,p}(\Omega )$ is defined by (\ref{3}) after merely replacing $\Bbb{R}%
^{N} $ with $\Omega .$ If $\ell \in \Bbb{N},$ the space $D^{-\ell ,p}(\Omega
)$ is the dual of the completion $D_{0}^{\ell ,p^{\prime }}(\Omega )$ of $%
\mathcal{C}_{0}^{\infty }(\Omega )$ for the norm $||\,|\nabla ^{\ell
}\varphi |\,||_{p^{\prime },\Omega }.$ (If $\Omega =\Bbb{R}^{N},$ the
definition (\ref{16}) is recovered since $\mathcal{C}_{0}^{\infty }$ is
dense in $\dot{D}^{\ell ,p^{\prime }}.$) With this definition, $D^{-\ell
,p}(\Omega )$ is a space of distributions, $\partial ^{\alpha }$ maps $%
D^{k,p}(\Omega )$ into $D^{k-|\alpha |_{1},p}(\Omega )$ for every $k\in \Bbb{%
\ Z}$ and $\partial ^{\alpha }$ is continuous from $D^{k,p}(\Omega )/%
\mathcal{P}_{k-1}$ to $D^{k-|\alpha |_{1},p}(\Omega )/\mathcal{P}_{k-|\alpha
|_{1}-1}.$ For more details, see e.g. \cite{Ga11}.

The first task will be to adjust Theorem \ref{th13} to the new setting
(Theorem \ref{th18}). To begin with, spaces $M^{s,p}(\Omega )$ and $%
M_{0}^{s,p}(\Omega )$ can also be defined on $\Omega $ in the obvious way,
by merely replacing $B_{R}$ with 
\begin{equation}
\Omega _{R}:=B_{R}\cap \Omega ,  \label{20}
\end{equation}
in (\ref{6}) and (\ref{7}) and by choosing $R>0$ large enough that $|\Omega
_{R}|>0.$ However, to ensure the $L^{p}$-integrability on $\Omega _{R},$ the
definition of $M^{s,p}(\Omega )$ must incorporate $u\in L_{loc}^{p}(%
\overline{\Omega })$ rather than just $u\in L_{loc}^{p}(\Omega ).$ This is
of course immaterial when $\Omega =\Bbb{R}^{N}.$

\begin{remark}
\label{rm5} The extension by $0$ outside $\Omega $ maps $M^{s,p}(\Omega )$ ($%
M_{0}^{s,p}(\Omega )$) into $M^{s,p}$ ($M_{0}^{s,p}$). As a result, Theorems 
\ref{th1}, \ref{th2} and \ref{th3} have obvious generalizations that we
shall not spell out explicitly.
\end{remark}

The aforementioned possible restrictions about $N$ and $p$ originate in part
(i) of the following lemma, related to Lemma \ref{lm15}.

\begin{lemma}
\label{lm16}Let $\omega \subset \Bbb{R}^{N}$ be an open subset. \newline
(i) Let $\varphi \in \mathcal{C}^{\infty }$ be such that $\limfunc{Supp}%
\varphi \subset \omega $ and that $\limfunc{Supp}\nabla \varphi $ is
compact. Then $\varphi v\in D^{k,p}$ for every $v\in D^{k,p}(\omega )$ if
either $k\geq 0$ and $1\leq p\leq \infty $ or $-N<k<0$ and $N/(N+k)<p<\infty %
.$ \newline
(ii) If $\omega $ is bounded and $\partial \omega $ has the cone property,
then $D^{k,p}(\omega )=W^{k,p}(\omega )$ if either $k\geq 0$ and $1\leq p%
\leq \infty $ or $k<0$ and $1<p<\infty .$ \newline
\end{lemma}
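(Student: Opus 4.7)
The plan is to treat parts (i) and (ii) separately, with (i) split into the cases $k \geq 0$ and $k < 0$.

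For (i) when $k \geq 0$, I would apply the Leibniz rule to $\partial^\alpha(\varphi v)$ for $|\alpha|_1 = k$:
\[
\partial^\alpha(\varphi v) = \varphi\,\partial^\alpha v + \sum_{0 < \gamma \leq \alpha} \binom{\alpha}{\gamma} \partial^\gamma \varphi \cdot \partial^{\alpha-\gamma} v.
\]
The first summand is in $L^p(\omega)$ since $\varphi \in L^\infty$ and $\partial^\alpha v \in L^p(\omega)$. In each remaining term, $\partial^\gamma \varphi$ is supported in the compact set $\limfunc{Supp}\nabla\varphi \subset \omega$, while $\partial^{\alpha-\gamma}v \in L_{loc}^p(\omega)$ by iterating the Schwartz theorem already recalled in \eqref{3}. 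Extending by $0$ outside $\limfunc{Supp}\varphi \subset \omega$ yields $\partial^\alpha(\varphi v) \in L^p(\Bbb{R}^N)$, hence $\varphi v \in D^{k,p}$.

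For (i) when $-N < k < 0$, set $\ell := |k|$ and note that $p > N/(N+k)$ rewrites as $\ell p' < N$. Since $\limfunc{Supp}(\varphi\psi)$ is a compact subset of $\omega$ for every $\psi \in \mathcal{C}_0^\infty(\Bbb{R}^N)$, the prescription $\langle \varphi v, \psi\rangle := \langle v, \varphi\psi\rangle$ makes sense, and I would show that it defines a continuous functional on $D_0^{\ell, p'}(\Bbb{R}^N)$. By Leibniz, $\nabla^\ell(\varphi\psi)$ splits into a diagonal term $\varphi\,\nabla^\ell \psi$, whose $L^{p'}$-norm is trivially bounded by $\|\varphi\|_\infty \|\nabla^\ell\psi\|_{L^{p'}}$, and cross-terms $\partial^\alpha\varphi \cdot \partial^\beta\psi$ with $\alpha \neq 0$ and $|\alpha|_1 + |\beta|_1 = \ell$. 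In each cross-term $\partial^\alpha\varphi$ is supported in a fixed compact $K \subset \omega$, while $\partial^\beta\psi \in \mathcal{C}_0^\infty(\Bbb{R}^N)$ has $\ell - |\beta|_1 \geq 1$ derivatives bounded in $L^{p'}$. Iterating the Gagliardo--Nirenberg--Sobolev inequality for $\mathcal{C}_0^\infty(\Bbb{R}^N)$-functions $\ell - |\beta|_1$ times---valid precisely because $\ell p' < N$ keeps every intermediate Sobolev exponent finite---gives $\|\partial^\beta\psi\|_{L^{q_\beta}(\Bbb{R}^N)} \leq C\|\nabla^\ell\psi\|_{L^{p'}}$ with some $q_\beta \in [p', \infty)$, and H\"older on $K$ converts this into an $L^{p'}(K)$ estimate. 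Summing, $\|\nabla^\ell(\varphi\psi)\|_{L^{p'}(\omega)} \leq C\|\nabla^\ell\psi\|_{L^{p'}(\Bbb{R}^N)}$, whence $|\langle \varphi v, \psi\rangle| \leq \|v\|_{D^{k,p}(\omega)}\|\varphi\psi\|_{D_0^{\ell, p'}(\omega)}$ is dominated by a constant multiple of $\|\psi\|_{D_0^{\ell, p'}(\Bbb{R}^N)}$, and density of $\mathcal{C}_0^\infty$ concludes. This cross-term control is the main obstacle, and the sharp Sobolev threshold $\ell p' < N$ is exactly why the hypothesis takes the form $p > N/(N+k)$.

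For (ii), both cases reduce to Poincar\'e-type inequalities on the bounded cone-property domain $\omega$. If $k \geq 0$, $W^{k,p}(\omega) \subset D^{k,p}(\omega)$ is trivial; conversely, the standard polynomial Poincar\'e inequality furnishes $P \in \mathcal{P}_{k-1}$ with $\|\nabla^j(v-P)\|_{L^p(\omega)} \leq C\|\nabla^k v\|_{L^p(\omega)}$ for $0 \leq j \leq k$, and since $P \in W^{k,p}(\omega)$ (bounded domain), $v \in W^{k,p}(\omega)$. If $k < 0$, both $D^{k,p}(\omega)$ and $W^{k,p}(\omega)$ are defined as duals of completions of $\mathcal{C}_0^\infty(\omega)$ under, respectively, the seminorm $\|\nabla^{|k|}\cdot\|_{L^{p'}}$ and the full $W^{|k|,p'}$-norm; the ordinary Poincar\'e inequality $\|\psi\|_{L^{p'}(\omega)} \leq C\|\nabla\psi\|_{L^{p'}(\omega)}$ for $\psi \in \mathcal{C}_0^\infty(\omega)$---valid because $\omega$ is bounded---iterates to show that the two norms are equivalent on $\mathcal{C}_0^\infty(\omega)$, so the completions agree and the duals coincide.
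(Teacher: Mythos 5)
Your proof is correct and follows essentially the same route as the paper's: Leibniz plus $D^{k,p}(\omega)\subset W_{loc}^{k,p}(\omega)$ when $k\geq 0$, a duality pairing $\langle\varphi v,\psi\rangle=\langle v,\varphi\psi\rangle$ controlled via Sobolev and H\"older on the compact $\limfunc{Supp}\nabla\varphi$ when $k<0$, and Poincar\'e plus duality for (ii). The only difference is cosmetic: the paper writes out the case $k=-1$ in (i) and declares the general negative case ``straightforward,'' whereas you spell out the iterated Gagliardo--Nirenberg--Sobolev estimate for arbitrary $-N<k<0$, which is exactly the intended generalization.
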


\begin{proof}
(i) If $k\geq 0,$ this follows from Leibnitz' rule and from $D^{k,p}(\omega
)\subset W_{loc}^{k,p}(\omega )$ (use $\limfunc{Supp}\nabla \varphi $
compact; in particular, $\varphi $ is locally constant outside a ball and
therefore bounded). Below, we give a proof when $k=-1$ (hence $N>1$ and $%
N/(N-1)<p<\infty $). When $k<0$ is arbitrary, the modifications are
straightforward.

First, $p>N/(N-1)$ amounts to $p^{\prime }<N,$ so that $p^{\prime
*}:=Np^{\prime }/(N-p^{\prime })<\infty .$ Let $\psi \in \mathcal{C}%
_{0}^{\infty }$ be given. Since $v\in D^{-1,p}(\omega ),$ it follows that $%
|\langle \varphi v,\psi \rangle |\leq C||\,|\nabla (\varphi \psi
)|\,||_{p^{\prime }}$ with $C>0$ independent of $\psi .$ Now, use $%
||\,|\nabla (\varphi \psi )|\,||_{p^{\prime }}\leq C_{\varphi }(||\,|\nabla
\psi |\,||_{p^{\prime }}+||\,\psi \,||_{p^{\prime },S_{\varphi }}),$ where $%
S_{\varphi }:=\limfunc{Supp}\nabla \varphi $ and $C_{\varphi }>0$ is
independent of $\psi .$ Next, by H\"{o}lder's inequality, $||\,\psi
\,||_{p^{\prime },S_{\varphi }}\leq |S_{\varphi }|^{1/N}||\,\psi
\,||_{p^{\prime *},S_{\varphi }},$ whereas $||\psi ||_{p^{\prime
*},S_{\varphi }}\leq ||\psi ||_{p^{\prime *}}\leq C||\,|\nabla \psi
|\,||_{p^{\prime }}$ by Sobolev's inequality. Altogether, $|\langle \varphi
v,\psi \rangle |\leq C||\,|\nabla \psi |\,||_{p^{\prime }}$ for every $\psi
\in \mathcal{C}_{0}^{\infty },$ whence $\varphi v\in D^{-1,p}.$

(ii) is trivial if $k=0$ and proved in \cite[p. 21]{Ma11} if $k>0.$ If so
and if $1<p<\infty ,$ then $D_{0}^{k,p}(\omega )=W_{0}^{k,p}(\omega )$ with
equivalent norms (Poincar\'{e}'s inequality), so that $D^{-k,p^{\prime
}}(\omega )=W^{-k,p^{\prime }}(\omega )$ by duality. Exchange the roles of $%
p $ and $p^{\prime }$ and of $k$ and $-k$ to get $D^{k,p}(\omega
)=W^{k,p}(\omega )$ when $k<0.$
\end{proof}

In part (i), the restrictions on $N$ and $p$ when $k<0$ are needed even if $%
\omega $ is bounded. In particular, if $v\in D^{k,p}(\omega )$ has compact
support, the extension of $v$ by $0$ need \emph{not} be in $D^{k,p}$ without
these restrictions; see Example \ref{ex8} and preceding comments (indeed, $%
\mathcal{C}_{0}^{\infty }(\omega )\subset D^{k,p}(\omega )$ for every $k\in 
\Bbb{Z}$ and $1<p<\infty $ when $\omega $ is bounded). On the other hand, no
restriction on $N$ and $p$ is necessary if $\omega $ is bounded and $\Bbb{R}%
^{N}$ is replaced with a \emph{bounded} open subset $\widetilde{\omega }%
\supset $ $\omega ,$ because Poincar\'{e}'s inequality can be substituted
for Sobolev's inequality in the proof.

The following generalization of Theorem \ref{th8} is straightforward. 

\begin{lemma}
\label{lm17}If $\partial \Omega $ has the cone property, Theorem \ref{th8}
remains true upon replacing $D^{k,p}$ and $M^{s,p}$ with $D^{k,p}(\Omega )$
and $M^{s,p}(\Omega ),$ respectively.
\end{lemma}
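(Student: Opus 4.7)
The plan is to reduce to the whole-space embedding of Theorem \ref{th8} by extending any $u\in D^{k,p}(\Omega)$ to a function $\tilde u\in D^{k,p}(\Bbb{R}^{N})$ that agrees with $u$ on $\Omega.$ Since $\Omega$ is exterior with $\partial \Omega$ compact and enjoying the cone property, I would proceed classically: fix $R_{0}>0$ with $\Bbb{R}^{N}\backslash \Omega \subset B_{R_{0}},$ set $\omega :=\Omega \cap B_{R_{0}+2}$ (bounded, with the cone property inherited from $\Omega$ and the smoothness of $\partial B_{R_{0}+2}$), and pick $\chi \in \mathcal{C}_{0}^{\infty }(B_{R_{0}+2})$ with $\chi \equiv 1$ on $B_{R_{0}+1}.$ By Lemma \ref{lm16} (ii), $u|_{\omega }\in W^{k,p}(\omega );$ the classical Calderon--Stein extension theorem for bounded sets with the cone property then produces $v\in W^{k,p}(\Bbb{R}^{N})$ with $v|_{\omega }=u|_{\omega }.$ I would set $\tilde u:=\chi v+(1-\chi )u,$ where $(1-\chi )u$ is taken to be $0$ off $\Omega,$ which is consistent because $1-\chi =0$ on $B_{R_{0}+1}\supset \Bbb{R}^{N}\backslash \Omega.$

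A short verification gives $\tilde u=u$ on $\Omega$ (on $\omega$ use $v=u$ to get $\tilde u=\chi u+(1-\chi )u=u,$ and on $\Omega \backslash B_{R_{0}+2}$ the cutoff is zero) and $\tilde u=v$ on all of $B_{R_{0}+2}$ (on $B_{R_{0}+2}\cap \Omega =\omega$ both equal $u,$ and on $B_{R_{0}+2}\backslash \Omega \subset B_{R_{0}+1}$ one has $\chi =1$). Since $v\in W^{k,p}(\Bbb{R}^{N})$ and $\tilde u=u$ outside $B_{R_{0}+2},$ it follows that $\nabla ^{k}\tilde u\in L^{p}(\Bbb{R}^{N}),$ i.e.\ $\tilde u\in D^{k,p}(\Bbb{R}^{N}).$ Theorem \ref{th8} then puts $\tilde u$ into $M^{s,p}$ or $M_{0}^{s,p}$ for the values of $s$ listed there.

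To transfer the conclusion back to $\Omega,$ I would use $\tilde u|_{\Omega }=u$ and the trivial bound
\begin{equation*}
||u||_{p,\Omega _{R}}\leq ||\tilde u||_{p,B_{R}},\qquad R\geq 1,
\end{equation*}
which makes $R^{-s-N/p}||u||_{p,\Omega _{R}}$ inherit the boundedness (resp.\ the vanishing as $R\rightarrow \infty$) of $R^{-s-N/p}||\tilde u||_{p,B_{R}}$ supplied by Theorem \ref{th8}. The local $L^{p}$-integrability on $\overline{\Omega }$ required by the definition of $M^{s,p}(\Omega )$ is automatic, since $u|_{\omega }\in W^{k,p}(\omega )\subset L^{p}(\omega ).$ This yields both $D^{k,p}(\Omega )\subset M^{s,p}(\Omega )$ and $D^{k,p}(\Omega )\subset M_{0}^{s,p}(\Omega )$ in precisely the ranges of $s$ dictated by cases (i) and (ii) of Theorem \ref{th8}.

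The only genuinely non-routine ingredient is the Calderon--Stein extension, and this is exactly where the cone property of $\partial \Omega$ is consumed: without it Lemma \ref{lm16} (ii) is unavailable and there is no way to cross $\partial \Omega.$ Everything else is bookkeeping comparing norms on $\Omega _{R}=B_{R}\cap \Omega$ and $B_{R}.$
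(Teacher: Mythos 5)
Your overall strategy (reduce to the whole-space Theorem~\ref{th8} by producing a global function that agrees with $u$ on $\Omega$) is the right one, and your gluing and transfer-of-norms steps are clean. The weak link is the extension step itself. Under only the cone property, the classical Calder\'{o}n extension theorem (the version applicable here) is proved for $1<p<\infty$; the construction uses singular-integral estimates that fail at $p=1$ and $p=\infty.$ Stein's extension theorem does cover $p=1,\infty,$ but it requires a Lipschitz boundary, which is strictly stronger than the cone property assumed in the lemma. Since Theorem~\ref{th8} (and hence Lemma~\ref{lm17}) is stated for the full range $1\leq p\leq \infty,$ your appeal to a ``Calder\'{o}n--Stein'' extension for cone-property domains leaves the endpoint cases unjustified. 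This is precisely why the paper does not extend $u$ across $\partial\Omega$ at all.

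The paper's route avoids the extension theorem entirely: take $\varphi\in\mathcal{C}^{\infty}(\Omega)$ with $\varphi\equiv 1$ outside $B_{R_{0}}$ and $\varphi\equiv 0$ on a neighborhood of $\Bbb{R}^{N}\backslash\Omega.$ Then $\varphi u$ vanishes near $\partial\Omega,$ so there is nothing to extend; Lemma~\ref{lm16}~(i), which for $k\geq 0$ is just Leibniz' rule plus $D^{k,p}(\Omega)\subset W_{loc}^{k,p}(\Omega),$ yields $\varphi u\in D^{k,p}(\Bbb{R}^{N})$ for every $1\leq p\leq\infty.$ Theorem~\ref{th8} applies to $\varphi u,$ and since $u-\varphi u$ is supported in $\Omega_{R_{0}}$ and lies in $L^{p}(\Omega_{R_{0}})$ by Lemma~\ref{lm16}~(ii), the $M^{s,p}(\Omega)$ (resp.\ $M_{0}^{s,p}(\Omega)$) membership of $u$ follows directly. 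Observe that in your own construction, $(1-\chi)u$ already plays exactly the role of $\varphi u,$ and showing it belongs to $D^{k,p}(\Bbb{R}^{N})$ is exactly the content of Lemma~\ref{lm16}~(i); the additional ingredient $\chi v$ (and with it the extension operator) is then superfluous. If you drop it and transfer the conclusion from $(1-\chi)u$ to $u$ using the compactly supported, $L^{p}$ correction $\chi u,$ your argument becomes both simpler and valid for all $1\leq p\leq \infty.$
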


\begin{proof}
Let $R_{0}>0$ be large enough that $\Bbb{R}^{N}\backslash \Omega \subset
B_{R_{0}}.$ If $u\in D^{k,p}(\Omega ),$ then $u\in D^{k,p}(\Omega
_{R_{0}})=W^{k,p}(\Omega _{R_{0}})$ (Lemma \ref{lm16} (ii)) and so $u\in
L_{loc}^{p}(\overline{\Omega }).$ Let $\varphi \in \mathcal{C}^{\infty
}(\Omega )$ be such that $\varphi =1$ outside $B_{R_{0}}$ and $\varphi =0$
on some neighborhood of $\Bbb{R}^{N}\backslash \Omega .$ By Lemma \ref{lm16}
(i) (with no restriction on $N$ or $p$ since $k\geq 0$), $\varphi u\in
D^{k,p}$ and so, by Theorem \ref{th8} for $\varphi u,$ it follows that $%
\varphi u\in M^{s,p}$ or $\varphi u\in M_{0}^{s,p}$for the specified values
of $s.$ This trivially implies $u\in M^{s,p}(\Omega )$ or $u\in
M_{0}^{s,p}(\Omega ),$ as the case may be.
\end{proof}

It is now easy to prove a variant of Theorem \ref{th13}.

\begin{theorem}
\label{th18}Suppose that $\partial \Omega \in \mathcal{C}^{0,1}$ and let $%
R_{0}>0$ be such that $\Bbb{R}^{N}\backslash \Omega \subset B_{R_{0}}.$ If $%
u\in \mathcal{D}^{\prime }(\Omega )$ and $Au\in D^{\kappa ,p}(\Omega )$ for
some integer $\kappa \geq \max \{-m,1-N\}$ and $\max \{1,N/(N+\kappa )\}<p<%
\infty ,$ the following properties are equivalent:\newline
(i) $u\in D^{m+\kappa ,p}(\Omega ).$ \newline
(ii) $u\in D^{m+\kappa ,p}(\Omega _{R_{0}})\cap $ $M_{0}^{s,p}(\Omega )$ for
every $s>m+\kappa -N/p$ if $p>N$ and every $s>m+\kappa -1$ if $p\leq -N.$%
\newline
(iii) $u\in D^{m+\kappa ,p}(\Omega _{R_{0}})\cap $ $M_{0}^{m+\kappa ,1}(%
\Omega ).$\newline
Furthermore, (i) $\Rightarrow $ (ii) if it is only assumed that $\partial 
\Omega $ has the cone property, $\kappa \geq -m$ and $1<p<\infty $ and (ii) $%
\Rightarrow $ (iii) is always true.
\end{theorem}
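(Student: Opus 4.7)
The three implications break into easy and hard parts, with only (iii) $\Rightarrow$ (i) requiring the full strength of the assumptions. For (i) $\Rightarrow$ (ii), since $m+\kappa \geq 0$, the exterior-domain analogue of Theorem \ref{th8} stated in Lemma \ref{lm17} (valid under just the cone property) immediately gives $u \in M_0^{s,p}(\Omega)$ for every $s$ in the stated range, while $u \in D^{m+\kappa,p}(\Omega_{R_0})$ is a trivial restriction. For (ii) $\Rightarrow$ (iii), which imposes no geometric hypothesis at all, select $s = m+\kappa$ in (ii) (interior to the allowed range in both cases $p > N$ and $p \leq N$) and use the H\"older embedding $M_0^{m+\kappa,p}(\Omega) \subset M_0^{m+\kappa,1}(\Omega)$.

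The substantive implication (iii) $\Rightarrow$ (i) I plan to reduce to the whole-space Theorem \ref{th13} by a cutoff. Pick $\varphi \in \mathcal{C}^\infty(\mathbb{R}^N)$ vanishing on a neighborhood of $\mathbb{R}^N \setminus \Omega$ and equal to $1$ outside $B_{R_0}$, so that $\operatorname{Supp}\varphi \subset \Omega$ and $\operatorname{Supp}\nabla\varphi$ is a compact subset of $\Omega_{R_0}$. Extend $v := \varphi u$ by zero to $\mathbb{R}^N$; the plan is to verify both hypotheses of Theorem \ref{th13} for $v$. The growth condition $v \in M_0^{m+\kappa,1}(\mathbb{R}^N)$ is immediate from $|v| \leq |u|\mathbf{1}_\Omega$ and $u \in M_0^{m+\kappa,1}(\Omega)$. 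For the source term, decompose
\begin{equation*}
Av = \varphi(Au) + C, \qquad C := [A,\varphi]u,
\end{equation*}
where $C$ is a linear combination of $(\partial^\beta\varphi)(\partial^{\alpha-\beta}u)$ with $|\alpha|_1 = m$ and $1 \leq |\beta|_1 \leq m$, hence a differential operator of order $\leq m-1$ in $u$ with smooth coefficients compactly supported in $\Omega_{R_0}$.

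The term $\varphi(Au)$ sits in $D^{\kappa,p}(\mathbb{R}^N)$ by Lemma \ref{lm16}(i), whose hypotheses on $N$ and $p$ are exactly the restrictions $\kappa > -N$ and $p > N/(N+\kappa)$ imposed here. For $C$, Lemma \ref{lm16}(ii) upgrades the hypothesis $u \in D^{m+\kappa,p}(\Omega_{R_0})$ to $u \in W^{m+\kappa,p}(\Omega_{R_0})$, so $C$ is compactly supported with $C \in W^{\kappa+1,p}$ interpreted locally. For $\kappa \geq 0$ this embeds into $W^{\kappa,p}(\mathbb{R}^N) \subset D^{\kappa,p}(\mathbb{R}^N)$. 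For $\kappa < 0$, an integration-by-parts rewriting $C = \sum_{|\alpha|_1 \leq m-1} \partial^\alpha(e_\alpha u)$ with smooth compactly supported $e_\alpha$ reduces matters to showing $e_\alpha u \in D^{-(m-|\alpha|_1),p}(\mathbb{R}^N)$ for each $\alpha$; since $e_\alpha u \in L^p_{\mathrm{comp}}(\mathbb{R}^N)$, this follows from Lemma \ref{lm15} applied with $k = m-|\alpha|_1$, whose hypotheses $k < N$ and $p > N/(N-k)$ are tightest at $|\alpha|_1 = 0$, giving back exactly $\kappa > -N$ and $p > N/(N+\kappa)$.

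Once Theorem \ref{th13} delivers $v \in D^{m+\kappa,p}(\mathbb{R}^N)$, the identity $v = u$ on $\Omega \setminus B_{R_0}$ gives $\nabla^{m+\kappa} u \in L^p(\Omega \setminus B_{R_0})$, which combines with the local regularity $u \in W^{m+\kappa,p}(\Omega_{R_0})$ to yield $u \in D^{m+\kappa,p}(\Omega)$. The central obstacle is the negative-$\kappa$ treatment of both the cutoff multiplication in $\varphi(Au)$ and the commutator $C$: each invokes Lemma \ref{lm16}(i) or Lemma \ref{lm15}, and both force the conditions $\kappa > -N$ and $p > N/(N+\kappa)$. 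This is precisely the source of the mandatory restrictions on $N$ and $p$ that distinguish the exterior-domain statement from its whole-space counterpart in Theorem \ref{th13}.
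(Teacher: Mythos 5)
Your treatment of (i) $\Rightarrow$ (ii) $\Rightarrow$ (iii) coincides with the paper's (Lemma \ref{lm17} for the first implication, $s=m+\kappa$ plus H\"{o}lder for the second). For (iii) $\Rightarrow$ (i) you take a genuinely different route. The paper never forms a commutator: it extends $u$ from $\Omega _{R_{0}}$ to $B_{R_{0}}$ by the Stein extension theorem (this is precisely where $\partial \Omega \in \mathcal{C}^{0,1}$ is used), so that $A\widetilde{u}$ is already known to lie in $D^{\kappa ,p}(B_{R_{0}})\cap D^{\kappa ,p}(\Omega ),$ and then writes $A\widetilde{u}=\varphi A\widetilde{u}+(1-\varphi )A\widetilde{u}$ with $\varphi \in \mathcal{C}_{0}^{\infty }(B_{R_{0}}),$ applying Lemma \ref{lm16} (i) twice. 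You instead cut off $u$ itself, which forces you to control $[A,\varphi ]u;$ this costs the extra bookkeeping with Lemma \ref{lm16} (ii), the divergence-form rewriting and Lemma \ref{lm15}, but it dispenses with the Stein extension, so your version of (iii) $\Rightarrow $ (i) in fact needs only the cone property rather than a Lipschitz boundary. Both routes locate the restrictions $\kappa >-N$ and $p>N/(N+\kappa )$ in Lemma \ref{lm16} (i) (and, for you, also in Lemma \ref{lm15}), so neither is sharper. One slip to correct in the commutator step: to conclude $\partial ^{\alpha }(e_{\alpha }u)\in D^{\kappa ,p}$ you must show $e_{\alpha }u\in D^{\kappa +|\alpha |_{1},p},$ not $e_{\alpha }u\in D^{-(m-|\alpha |_{1}),p};$ the latter is the right target only when $\kappa =-m,$ and since the $D^{k,p}$ scale is not nested in $k$ it neither implies nor is implied by the former, while proving it via Lemma \ref{lm15} would require the stronger condition $p>N/(N-m).$ With the corrected exponent, Lemma \ref{lm15} with $k=-\kappa -|\alpha |_{1}$ (worst case $|\alpha |_{1}=0,$ $k=-\kappa $) applied to the compactly supported $L^{p}$ function $e_{\alpha }u$ yields exactly the stated restrictions, and the terms with $\kappa +|\alpha |_{1}\geq 0$ follow directly from (\ref{18}); the rest of your argument then goes through.
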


\begin{proof}
(i) $\Rightarrow $ (ii) $\Rightarrow $ (iii) If $u\in D^{m+\kappa ,p}(\Omega
),$ it is obvious that $u\in D^{m+\kappa ,p}(\Omega _{R_{0}}),$ whereas $%
u\in M_{0}^{s,1}(\Omega )$ for every $s>m+\kappa -N/p$ if $p>N$ and every $%
s>m+\kappa -1$ if $p\leq -N$ by Lemma \ref{lm17}. In particular, $u\in
M_{0}^{m+\kappa ,1}(\Omega )$ by Remark \ref{rm5} and Theorem \ref{th1}
(ii). This also proves the ``furthermore'' part.

(iii) $\Rightarrow $ (i) Suppose that $u\in D^{m+\kappa ,p}(\Omega
_{R_{0}}), $ that $Au\in D^{\kappa ,p}(\Omega )$ and that $u\in
M_{0}^{m+\kappa ,1}(\Omega ).$ By Lemma \ref{lm16} (ii), $D^{m+\kappa
,p}(\Omega _{R_{0}})=W^{m+\kappa ,p}(\Omega _{R_{0}})$ and so, by the Stein
extension theorem (this uses $\partial \Omega \in \mathcal{C}^{0,1};$ see 
\cite[p. 154]{AdFo03}, \cite[Chapter 6]{St70}), $u$ can be extended to all
of $\Bbb{R}^{N} $ as a function $\widetilde{u}\in W^{m+\kappa
,p}(B_{R_{0}}). $ In particular, $A\widetilde{u}\in D^{\kappa ,p}(B_{R_{0}})$
and $A\widetilde{u}\in D^{\kappa ,p}(\Omega )$ since $\widetilde{u}=u$ on $%
\Omega . $

Choose $\varphi \in \mathcal{C}_{0}^{\infty }(B_{R_{0}})$ with $\varphi =1$
on a neighborhood of $\Bbb{R}^{N}\backslash \Omega $ and write $A\widetilde{u%
}=\varphi A\widetilde{u}+(1-\varphi )A\widetilde{u}.$ By Lemma \ref{lm16}
(i) with $\omega =B_{R_{0}}$ and, next, $\omega =\Omega ,$ we get $\varphi A%
\widetilde{u}\in D^{\kappa ,p}$ and $(1-\varphi )A\widetilde{u}\in D^{\kappa
,p}.$ This shows that $A\widetilde{u}\in D^{\kappa ,p}.$ Since $u\in
M_{0}^{m+\kappa ,1}(\Omega )$ implies $\widetilde{u}\in M_{0}^{m+\kappa ,1},$
Theorem \ref{th13} yields $\widetilde{u}\in D^{m+\kappa ,p},$ whence $u\in
D^{m+\kappa ,p}(\Omega ).$
\end{proof}

If $N>2$ and $u(x):=|x|^{2-N},$ then $\Delta u=0$ in $\Omega ,u\in
L^{p}(\Omega _{R_{0}})$ for every $1<p<\infty $ and $u\in L^{p}(\Omega )$ if
and only if $p>N/(N-2).$ In particular, $u\in M_{0}^{0,1}(\Omega ).$ Thus,
the hypotheses of Theorem \ref{th18} are satisfied with $m=2,\kappa =-2$ and 
$N/(N-2)<p<\infty .$ Since $u\notin L^{p}(\Omega )$ when $p\leq N/(N-2),$
this shows that the condition $p>N/(N-2)$ cannot be dropped. Of course, the
similarities with Example \ref{ex8} are no coincidence and the functions and
operators of that example also show that, more generally, $p>N/(N+\kappa )$
cannot be dropped in Theorem \ref{th18}.

We shall not spell out the obvious analog of Corollary \ref{cor14} (just
note that $L^{p}(\Omega )\cap D^{m+\kappa ,p}(\Omega )=W^{m+\kappa
,p}(\Omega )$ when $\kappa \geq -m$ follows, by extension, from the same
property when $\Omega =\Bbb{R}^{N}$ and from Lemma \ref{lm16} (ii)). The
consistency question, similar to Example \ref{ex5} when $\Omega =\Bbb{R}%
^{N}, $ is settled in the following corollary, but in a (necessarily) more
limited setting. It has not been addressed in works discussing existence,
even when $A=\Delta $ (for instance, \cite{SiSo96}).

\begin{corollary}
\label{cor19}Suppose that $u\in D^{m+\kappa _{1},p_{1}}(\Omega )$ for some
integer $\kappa _{1}\geq \max \{-m,-N+1\}$ and some $\max \{1,N/(N+\kappa
_{1})\}<p_{1}<\infty $ and that $Au\in D^{\kappa _{2},p_{2}}(\Omega )$ for
some $\kappa _{2}\geq \kappa _{1}$ and some $\max \{1,N/(N+\kappa
_{2})\}<p_{2}<\infty .$\newline
(i) If $\partial \Omega \in \mathcal{C}^{0,1}$ and $u\in D^{m+\kappa
_{2},p_{2}}(\Omega _{R_{0}})$ with $R_{0}>0$ such that $\Bbb{R}%
^{N}\backslash \Omega \subset B_{R_{0}}$ (whence $u\in W^{m+\kappa
_{2},p_{2}}(\Omega _{R_{0}})$ by Lemma \ref{lm16} (ii)), then $u\in
D^{m+\kappa _{2},p_{2}}(\Omega ).$ \newline
(ii) If $\Omega ^{\prime }$ is an open subset such that $\overline{\Omega }%
^{\prime }\subset \Omega ,$ then $u\in D^{m+\kappa _{2},p_{2}}(\Omega %
^{\prime }).$
\end{corollary}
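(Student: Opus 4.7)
For part~(i), the plan is to apply the implication (iii)$\Rightarrow$(i) of Theorem~\ref{th18} with $\kappa=\kappa_{2}$ and $p=p_{2}$. Two of the three conditions in (iii), namely $Au\in D^{\kappa_{2},p_{2}}(\Omega)$ and $u\in D^{m+\kappa_{2},p_{2}}(\Omega_{R_{0}})$, are hypotheses. To verify the third, $u\in M_{0}^{m+\kappa_{2},1}(\Omega)$, I would use the hypothesis $u\in D^{m+\kappa_{1},p_{1}}(\Omega)$ together with Lemma~\ref{lm17}: from $\kappa_{2}\geq\kappa_{1}$ one has $m+\kappa_{2}>m+\kappa_{1}-N/p_{1}$ when $p_{1}>N$ and $m+\kappa_{2}>m+\kappa_{1}-1$ when $p_{1}\leq N$, so Lemma~\ref{lm17} with $s=m+\kappa_{2}$ yields $u\in M_{0}^{m+\kappa_{2},p_{1}}(\Omega)$, which embeds into $M_{0}^{m+\kappa_{2},1}(\Omega)$ via Theorem~\ref{th1}(ii) (Remark~\ref{rm5}) and $p_{1}>1$. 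Theorem~\ref{th18} then delivers $u\in D^{m+\kappa_{2},p_{2}}(\Omega)$.

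For part~(ii), the strategy is to shrink $\Omega$ to an auxiliary exterior domain on which part~(i) can be invoked. Since $\Bbb{R}^{N}\setminus\Omega$ is compact and disjoint from the closed set $\overline{\Omega}'$, their distance $\delta$ is strictly positive. I would pick a compact set $K$ with $\mathcal{C}^{\infty}$ boundary satisfying $\Bbb{R}^{N}\setminus\Omega\subset\mathrm{int}\,K\subset K\subset\{x:d(x,\Bbb{R}^{N}\setminus\Omega)\leq\delta/2\}$, so that $K\cap\overline{\Omega}'=\emptyset$, and set $\widetilde{\Omega}:=\Bbb{R}^{N}\setminus K$. This is an exterior domain with $\mathcal{C}^{0,1}$ (even $\mathcal{C}^{\infty}$) boundary satisfying $\overline{\Omega}'\subset\widetilde{\Omega}\subset\overline{\widetilde{\Omega}}\subset\Omega$. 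The hypotheses on $u$ transfer to $\widetilde{\Omega}$ by restriction of distributions --- directly for nonnegative orders, and for negative orders induced by duality from the tautological isometric extension-by-zero embedding of the test-function completions. Thus $u\in D^{m+\kappa_{1},p_{1}}(\widetilde{\Omega})$ and $Au\in D^{\kappa_{2},p_{2}}(\widetilde{\Omega})$.

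It then remains to check the last hypothesis of part~(i) on $\widetilde{\Omega}$, namely $u\in D^{m+\kappa_{2},p_{2}}(\widetilde{\Omega}_{\widetilde{R}_{0}})$ for some $\widetilde{R}_{0}$ with $K\subset B_{\widetilde{R}_{0}}$. Since $\overline{\widetilde{\Omega}_{\widetilde{R}_{0}}}$ is compact in $\Omega$ and has the cone property by construction, Lemma~\ref{lm16}(ii) reduces this to $u\in W^{m+\kappa_{2},p_{2}}_{\mathrm{loc}}(\Omega)$. This is a classical $L^{p}$ interior elliptic regularity statement for the constant-coefficient homogeneous elliptic operator $A$, bootstrapping from the known regularity $u\in W^{m+\kappa_{1},p_{1}}_{\mathrm{loc}}(\Omega)$ and $Au\in W^{\kappa_{2},p_{2}}_{\mathrm{loc}}(\Omega)$ (both via Lemma~\ref{lm16}(ii)) by a standard parametrix argument; this is the main technical point, requiring care when $\kappa_{2}$ is negative. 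Once it is in hand, part~(i) applied on $\widetilde{\Omega}$ produces $u\in D^{m+\kappa_{2},p_{2}}(\widetilde{\Omega})$, and the trivial restriction to $\Omega'\subset\widetilde{\Omega}$ (legitimate since $m+\kappa_{2}\geq 0$ because $\kappa_{2}\geq\kappa_{1}\geq -m$) gives $u\in D^{m+\kappa_{2},p_{2}}(\Omega')$.
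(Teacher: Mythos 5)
Your part (i) is correct and is the paper's argument in all but name: the paper obtains $u\in M_{0}^{m+\kappa _{1},1}(\Omega )\subset M_{0}^{m+\kappa _{2},1}(\Omega )$ from the implication (i) $\Rightarrow $ (iii) of Theorem \ref{th18} applied with $(\kappa _{1},p_{1}),$ which is itself just Lemma \ref{lm17}, and then invokes (iii) $\Rightarrow $ (i) with $(\kappa _{2},p_{2})$ exactly as you do. Your part (ii) also has the same skeleton as the paper's: reduce everything to showing $u\in W_{loc}^{m+\kappa _{2},p_{2}}(\Omega )$ and then feed this into part (i) on an auxiliary Lipschitz exterior domain (you shrink $\Omega $ to $\widetilde{\Omega }\supset \overline{\Omega }^{\prime };$ the paper instead enlarges $\Omega ^{\prime }$ to have $\mathcal{C}^{0,1}$ boundary --- an immaterial difference, and your transfer of the negative-order hypothesis by duality from extension-by-zero is fine).

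The weak point is that you declare $u\in W_{loc}^{m+\kappa _{2},p_{2}}(\Omega )$ to be ``classical interior elliptic regularity'' and stop. That step is the entire content of part (ii), and it is not routine in the generality needed here: when $\kappa _{2}<0$ and $p_{1}\neq p_{2}$ one must run a parametrix/bootstrap on negative-order $L^{p}$ Sobolev spaces and then cross from the $p_{1}$ scale to the $p_{2}$ scale, none of which you carry out. The paper does not cite this; it proves it in three lines from its own machinery, and you should too: pick $\varphi \in \mathcal{C}_{0}^{\infty }(\Omega )$ with $\varphi =1$ on an open ball $B\subset \Omega .$ By Lemma \ref{lm16} (i) --- whose restrictions on $N,\kappa _{2},p_{2}$ are exactly the hypotheses of the corollary --- $\varphi Au\in D^{\kappa _{2},p_{2}}(\Bbb{R}^{N}),$ so Theorem \ref{th12} provides $v\in D^{m+\kappa _{2},p_{2}}$ with $Av=\varphi Au.$ Then $A(u-v)=0$ on $B$ and $A$ is hypoelliptic, so $u-v\in \mathcal{C}^{\infty }(B)$ and hence $u\in W^{m+\kappa _{2},p_{2}}(B^{\prime })$ for every $B^{\prime }\Subset B.$ Varying $B$ gives $u\in W_{loc}^{m+\kappa _{2},p_{2}}(\Omega )$ with no bootstrapping and no case distinction on the sign of $\kappa _{2}.$ With that substitution your argument closes; as written, the key step is asserted rather than proved.
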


\begin{proof}
(i) By Theorem \ref{th18} with $\kappa =\kappa _{1}$ and $p=p_{1},$ $u\in
M_{0}^{m+\kappa _{1},1}(\Omega )\subset M_{0}^{m+\kappa _{2},1}(\Omega )$
and then $u\in D^{m+\kappa _{2},p_{2}}(\Omega )$ by Theorem \ref{th18} with $%
\kappa =\kappa _{2}$ and $p=p_{2}.$

(ii) After enlarging $\Omega ^{\prime },$ it is not restrictive to assume $%
\partial \Omega ^{\prime }\in \mathcal{C}^{0,1}.$ Let $\varphi \in \mathcal{C%
}_{0}^{\infty }(\Omega )$ be such that $\varphi =1$ on some open ball $%
B\subset \Omega .$ By Lemma \ref{lm16} (i), $\varphi Au\in D^{\kappa
_{2},p_{2}},$ so that, by Theorem \ref{th12}, there is $v\in D^{m+\kappa
_{2},p_{2}}$ such that $Av=\varphi Au.$ In particular, $A(v-u)=0$ on $B.$ By
hypoellipticity, $v-u\in \mathcal{C}^{\infty }(B),$ whence $u\in D^{m+\kappa
_{2},p_{2}}(B^{\prime })=W^{m+\kappa _{2},p_{2}}(B^{\prime })$ for every
ball $B^{\prime }\Subset B.$ This shows that $u\in W_{loc}^{m+\kappa
_{2},p_{2}}(\Omega )$ and, hence, that $u\in W^{m+\kappa _{2},p_{2}}(\Omega
_{R_{0}}^{\prime })\subset D^{m+\kappa _{2},p_{2}}(\Omega _{R_{0}}^{\prime
}) $ for every $R_{0}>0$ such that $\Bbb{R}^{N}\backslash \Omega ^{\prime
}\subset B_{R_{0}}.$ Thus, $u\in D^{m+\kappa _{2},p_{2}}(\Omega ^{\prime })$
by (i) with $\Omega $ replaced with $\Omega ^{\prime }$.
\end{proof}

In part (i) of Corollary \ref{cor19}, the condition $u\in D^{m+\kappa
_{2},p_{2}}(\Omega _{R_{0}})$ depends only upon the behavior of $u$ near $%
\partial \Omega .$ This may be provable by elliptic regularity arguments.
For instance, if $A$ is properly elliptic (hence $m=2\ell $ is even), $%
\kappa _{1}\geq 0$ (hence $\kappa _{2}\geq 0$), $\partial \Omega \in 
\mathcal{C}^{m+\kappa _{2}}$ and $\partial ^{j}u/\partial \nu ^{j}\in
W^{m+\kappa _{2}-j-1/p_{2},p_{2}}(\partial \Omega )$ for $0\leq j\leq \ell
-1,$ classical elliptic regularity yields $u\in W^{m+\kappa
_{2},p_{2}}(\Omega _{R_{0}})$ (note that $u\in W_{loc}^{m+\kappa
_{2},p_{2}}(\Omega )$ since $Au\in D^{\kappa _{2},p_{2}}(\Omega )\subset
W_{loc}^{\kappa _{2},p_{2}}(\Omega ),$ whence $\partial ^{j}u/\partial \nu
^{j}\in W^{m+\kappa _{2}-j-1/p_{2},p_{2}}(\partial \Omega _{R_{0}})$ for $%
0\leq j\leq \ell -1$). This remains true under much more general boundary
conditions on $\partial \Omega $ under suitable smoothness requirements; see 
\cite[Corollary 2.1]{Ra09} and ``obvious'' generalizations when $\kappa >0$
in that paper. If $\kappa _{1}<0,$ there may or may not be an elliptic
regularity result to answer the question.

We complete this section with an example showing how solutions of boundary
value problems on $\Omega $ can be found in the space $M_{0}^{0,1}(\Omega ).$
As pointed out in the Introduction, the functions of $M_{0}^{0,1}$ vanish at
infinity in a generalized (averaged) sense. On the exterior domain $\Omega ,$
this property is obviously shared by the functions of $M_{0}^{0,1}(\Omega ).$
In spite of having no direct connection with regularity, this short
digression is included since it involves the $M^{s,q}$ scale introduced
earlier, which has not been used elsewhere to discuss the asymptotic
behavior of solutions of PDEs. In the next theorem, $W_{loc}^{1,q}(\overline{%
\Omega })$ refers to the space of distributions $u\in \mathcal{D}^{\prime
}(\Omega )$ such that $\varphi u\in W^{1,q}(\Omega )$ for every $\varphi \in 
\mathcal{C}_{0}^{\infty }(\overline{\Omega }).$

\begin{theorem}
\label{th20}Suppose that $N>2$ and that $\partial \Omega \in \mathcal{C}^{1}.
$ If $|x|^{(N+2)-2N/q}f\in L^{q}(\Omega )$ and $g\in W^{1-1/q,q}(\partial 
\Omega )$ for some $1<q<\infty ,$ the Dirichlet boundary value problem 
\begin{equation}
\left\{ 
\begin{array}{c}
\Delta u=f\text{ in }\Omega , \\ 
u=g\text{ on }\partial \Omega ,
\end{array}
\right.   \label{21}
\end{equation}
has a solution $u\in W_{loc}^{1,q}(\overline{\Omega })\cap M_{0}^{0,1}(%
\Omega ).$ If $\partial \Omega \in \mathcal{C}^{0,1},$ this remains true
when $3/2\leq q\leq 3$ (and, more generally, when $3(\varepsilon
+2)^{-1}<q<3(1-\varepsilon )^{-1}$ for some $0<\varepsilon \leq 1$ depending
only upon $\Omega $).
\end{theorem}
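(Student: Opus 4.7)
The plan is to reduce (\ref{21}) to a Dirichlet problem on a bounded domain via the Kelvin transform and then verify that the pulled-back solution satisfies the growth condition $M_{0}^{0,1}(\Omega )$. Since $0\notin \overline{\Omega },$ the inversion $\iota :x\mapsto x/|x|^{2}$ is a smooth diffeomorphism of $\Bbb{R}^{N}\backslash \{0\}$ carrying $\Omega $ onto $\Omega ^{*}\backslash \{0\},$ where $\Omega ^{*}$ is a bounded open set with $0\in \Omega ^{*}$ and $\partial \Omega ^{*}=\iota (\partial \Omega )$ inherits the regularity of $\partial \Omega .$ Setting $\tilde{u}(y):=|y|^{2-N}u(y/|y|^{2}),$ the classical Kelvin identity $\Delta _{y}\tilde{u}(y)=|y|^{-N-2}(\Delta u)(y/|y|^{2})$ converts (\ref{21}) into
\begin{equation*}
\Delta \tilde{u}=\tilde{f}\quad \text{in }\Omega ^{*},\qquad \tilde{u}=\tilde{g}\quad \text{on }\partial \Omega ^{*},
\end{equation*}
with $\tilde{f}(y):=|y|^{-N-2}f(y/|y|^{2})$ and $\tilde{g}(y):=|y|^{2-N}g(y/|y|^{2}).$ The change of variable $y=x/|x|^{2}$ (whose Jacobian has absolute value $|x|^{-2N}$) gives $||\tilde{f}||_{q,\Omega ^{*}}^{q}=\int_{\Omega }|x|^{(N+2)q-2N}|f|^{q}dx=||\,|x|^{N+2-2N/q}f\,||_{q,\Omega }^{q},$ so $\tilde{f}\in L^{q}(\Omega ^{*});$ and $\tilde{g}\in W^{1-1/q,q}(\partial \Omega ^{*})$ because the pre-factor $|y|^{2-N}$ is smooth on a neighborhood of $\partial \Omega ^{*}.$

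Next I would invoke the classical solvability of the Dirichlet problem on the bounded domain $\Omega ^{*}$: for $\partial \Omega ^{*}\in \mathcal{C}^{1}$ and $1<q<\infty $ there exists $\tilde{u}\in W^{1,q}(\Omega ^{*})$ with $\Delta \tilde{u}=\tilde{f}$ and $\tilde{u}|_{\partial \Omega ^{*}}=\tilde{g}$ (source in $L^{q}\hookrightarrow W^{-1,q},$ trace data in $W^{1-1/q,q}$), while for Lipschitz $\partial \Omega ^{*}$ the same is known in the Jerison--Kenig range $3/2\leq q\leq 3,$ extended as stated by perturbation of the critical exponent. Setting $u(x):=|x|^{2-N}\tilde{u}(x/|x|^{2})$ then produces a distributional solution of (\ref{21}). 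The claim $u\in W_{loc}^{1,q}(\overline{\Omega })$ follows since every compact $K\subset \overline{\Omega }$ is mapped by $\iota $ onto a compact subset of $\overline{\Omega ^{*}}\backslash \{0\}$ on which $\iota $ is a $\mathcal{C}^{\infty }$ (respectively bi-Lipschitz) diffeomorphism, so that the Sobolev regularity of $\tilde{u}$ transfers to $u.$

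The main obstacle is the condition $u\in M_{0}^{0,1}(\Omega ).$ After discarding the bounded contribution from $\Omega \cap B_{2R_{0}}$ (whose weighted average is $O(R^{-N})$), the change of variable reduces $|B_{R}|^{-1}\int_{B_{R}\cap \Omega }|u|dx$ up to a constant to
\begin{equation*}
R^{-N}\int_{\Omega ^{*}\cap \{1/R<|y|<\delta \}}|y|^{-N-2}|\tilde{u}(y)|dy,\qquad \delta :=1/(2R_{0}).
\end{equation*}
Because $0$ lies in the interior of $\Omega ^{*}$ and $\Delta \tilde{u}=\tilde{f}\in L^{q},$ interior elliptic regularity upgrades $\tilde{u}$ to $W^{2,q}$ on a neighborhood of $0.$ The Sobolev embedding then provides $\tilde{u}\in L^{r}(B_{\delta })$ for some $r>N/(N-2),$ since the critical exponent $Nq/(N-2q)$ exceeds $N/(N-2)$ precisely when $q>1$ (any finite $r$ is available if $2q\geq N$). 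H\"{o}lder's inequality combined with the elementary asymptotic $||\,|y|^{-N-2}\,||_{r^{\prime },\{1/R<|y|<\delta \}}\sim R^{2+N/r}$ as $R\rightarrow \infty $ then bounds the displayed integral by $CR^{2-N+N/r}||\tilde{u}||_{r,B_{\delta }},$ which tends to $0$ because $r>N/(N-2)$ and $N>2.$ The delicate point is precisely this final step: for small $q$ the bare $W^{1,q}$ bound on $\tilde{u}$ does not suffice, and one must exploit the interior elliptic gain of two derivatives to cross the critical threshold $r>N/(N-2).$
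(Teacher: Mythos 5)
Your proposal is correct and follows essentially the same route as the paper's proof: Kelvin inversion to a bounded domain $\Omega^{K}$, solvability of the transformed Dirichlet problem in $W^{1,q}(\Omega^{K})$ (citing the $\mathcal{C}^{1}$ case for all $1<q<\infty$ and Jerison--Kenig for the Lipschitz case), and then the key point — interior elliptic regularity giving $u^{K}\in W^{2,q}$ near the origin, followed by Sobolev embedding and H\"older to push the exponent past $N/(N-2)$ and conclude membership in $M_{0}^{0,1}(\Omega)$. The computations (the $R^{2-N+N/r}$ decay rate, the role of $q>1$ in ensuring $Nq/(N-2q)>N/(N-2)$) match the paper's verbatim.
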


\begin{proof}
We reformulate the problem (\ref{21}) with the help of the Kelvin transform
method (\cite{AxBoRa01}, \cite[Vol. 1]{DaLi90}). Denote by $\Omega ^{K}$ the
bounded open subset of $\Bbb{R}^{N}$ obtained by the inversion $x\mapsto
y:=|x|^{-2}x$ of $\Omega \cup \{\infty \}.$ The boundary $\partial \Omega
^{K}$ is the inverse of $\partial \Omega ,$ so that $\partial \Omega ^{K}\in 
\mathcal{C}^{1}$. If $h$ is a function defined on a subset of $\Bbb{R}
^{N}\backslash \{0\},$ set 
\begin{equation*}
h^{K}(y):=|y|^{2-N}h(|y|^{-2}y),
\end{equation*}
(Kelvin transform of $f$). Note that $(h^{K})^{K}=h.$

Therefore, $u$ is a solution of (\ref{21}) if and only if $u^{K}$ solves $%
\Delta u^{K}=|y|^{-4}f^{K}$ on $\Omega ^{K}\backslash \{0\}$ and $%
u^{K}=g^{K} $ on $\partial \Omega ^{K}.$ In particular, if $|y|^{-4}f^{K}$
can be extended as a distribution on $\Omega ^{K},$ solutions $%
u:=(u^{K})^{K} $ of (\ref{21}) can be found by solving the Dirichlet problem 
\begin{equation}
\left\{ 
\begin{array}{l}
\Delta u^{K}=|y|^{-4}f^{K}\text{ in }\Omega ^{K}, \\ 
u^{K}=g^{K}\text{ on }\partial \Omega ^{K}.
\end{array}
\right.  \label{22}
\end{equation}
The standing assumption $|x|^{(N+2)-2N/q}f\in L^{q}(\Omega )$ is equivalent
to $|y|^{-4}f^{K}\in L^{q}(\Omega ^{K}).$ Since $L^{q}(\Omega ^{K})\subset
W^{-1,q}(\Omega ^{K})$ and since $\partial \Omega ^{K}\in \mathcal{C}^{1}$
and $g^{K}\in W^{1-1/q,q}(\partial \Omega ^{K}),$ there is a unique solution 
$u^{K}\in W^{1,q}(\Omega ^{K})$ of (\ref{22}). By standard trace theorems,
this follows by reduction to the case $g^{K}=0.$ When $g^{K}=0,$ see Simader
and Sohr \cite[Theorem 1.2, p. 45]{SiSo96}, Morrey \cite[Remarks, p. 157]
{Mo66} (when $q\geq 2$) or Dautray and Lions \cite[p. 409]{DaLi90} (with
proof in \cite{LiMa61}) when $\partial \Omega ^{K}\in \mathcal{C}^{\infty }.$

By Jerison and Kenig \cite[Theorem 1.1]{JeKe95}, the case when $\partial
\Omega ^{K}\in \mathcal{C}^{0,1}$ (i.e., $\partial \Omega \in \mathcal{C}%
^{0,1}$) introduces the necessary restrictions $3(\varepsilon
+2)^{-1}<q<3(1-\varepsilon )^{-1}$ for some $0<\varepsilon \leq 1$ depending
only upon $\Omega ^{K}$ (i.e., upon $\Omega $). This range includes $3/2\leq
q\leq 3.$

Clearly, $u^{K}\in W^{1,q}(\Omega ^{K})$ implies $u\in W_{loc}^{1,q}(%
\overline{\Omega }),$ but this alone does not give any information about the
behavior of $u$ at infinity. Choose $R_{0}>0$ such that $\Bbb{R}%
^{N}\backslash \Omega \subset B_{R_{0}}.$ Then, $I_{0}:=\int_{\Omega
_{R_{0}}}|u|<\infty $ since $u\in W^{1,q}(\Omega _{R_{0}})$ and, if $R>R_{0}$
(observe that $-|y|^{-2N}$ is the Jacobian of $|y|^{-2}y$) 
\begin{equation*}
\left. 
\begin{array}{c}
\int_{\Omega _{R}}|u|=I_{0}+\int_{B_{R}\backslash
B_{R_{0}}}|u|=I_{0}+\int_{B_{R_{0}^{-1}}\backslash
B_{R^{-1}}}|y|^{-(N+2)}|u^{K}(y)|dy.
\end{array}
\right.
\end{equation*}
Now, by interior elliptic regularity, $u^{K}\in W_{loc}^{2,q}(\Omega
^{K})\subset W^{2,q}(B_{R_{0}^{-1}})$ and so, by H\"{o}lder's inequality, $%
\int_{B_{R_{0}^{-1}}\backslash B_{R^{-1}}}|y|^{-(N+2)}|u^{K}(y)|dy\leq
C_{r}R^{N+2-N/r^{\prime }}||u^{K}||_{r,B_{R_{0}^{-1}}}$ if $%
W^{2,q}(B_{R_{0}^{-1}})\hookrightarrow L^{r}(B_{R_{0}^{-1}}),$ where $%
C_{r}>0 $ is independent of $R.$ Thus, $u\in M^{2-N/r^{\prime },1}(\Omega )$
for every such $r.$ By the Sobolev embedding theorem, we can choose $%
r=Nq/(N-2q)$ if $q<N/2$ or $r$ arbitrarily large if $q\geq N/2.$ In summary, 
$u\in M^{s,1}\Omega )$ with $s=-N/q^{\prime }$ if $q<N/2$ or with $s>2-N$ if 
$q\geq N/2.$ Since $N>2$ and $q>1,$ it follows that $u\in M^{s,1}(\Omega )$
for some $s<0, $ whence $u\in M_{0}^{0,1}(\Omega ).$
\end{proof}

If $q>2N/(N+2),$ it is a bit tedious but not difficult to check that the
solution $u$ of Theorem \ref{th20} is even in $L^{p}$ for some $1<p<\infty $
(hence in $M_{0}^{0,1}$), but this is not the case if $1<q\leq 2N/(N+2).$
Also, $|u(x)|=O(|x|^{2-N})=o(1)$ for large $|x|$ if $u^{K}$ is continuous at
the origin. This requires $|x|^{(N+2)-2N/q}f\in L^{q}(\Omega )$ with $q>N/2$
(so that $u^{K}\in W_{loc}^{2,q}(\Omega ^{K})$), which may not be compatible
with $q<3(1-\varepsilon )^{-1}$ when $N\geq 7$ and $\partial \Omega \in 
\mathcal{C}^{0,1}.$ At any rate, this is stronger\footnote{%
Since the condition $|x|^{(N+2)-2N/q}f\in L^{q}(\Omega )$ is equivalent to $%
|y|^{-4}f^{K}\in L^{q}(\Omega ^{K})$ and $\Omega ^{K}$ is bounded, it
becomes more restrictive as $q$ is increased.} than $|x|^{N-2}f\in L^{N/2}(%
\Omega ).$ By comparison, Theorem \ref{th20} shows that when $\partial 
\Omega \in \mathcal{C}^{1},$ solutions vanishing at infinity still exist
under the (much) more general condition $|x|^{(N+2)-2N/q}f\in L^{q}(\Omega )$
for some $q>1,$ only slightly stronger than $|x|^{2-N}f\in L^{1}(\Omega ).$
For example, this amounts to $\alpha <-2$ versus $\alpha <-N$ if $%
f(x)=|x|^{\alpha }$ for large $|x|.$

The method of Theorem \ref{th20} can readily be used with other boundary
conditions and other operators. Neither the homogeneity nor the constancy of
the coefficients is important, as long as the problem on $\Omega ^{K}$ fits
within the elliptic theory.

\section{Systems\label{systems}}

In what follows, $n\in \Bbb{N},\mathbf{m}:=(m_{1},...,m_{n})\in (\Bbb{N}%
_{0})^{n}$ and $\mathbf{\kappa }:=(\kappa _{1},...,\kappa _{n})\in \Bbb{Z}%
^{n}$ are given and $\mathbf{A}:=(A_{jk})_{1\leq j,k\leq n}$ is a matrix
differential operator where 
\begin{equation*}
A_{jk}:=i^{m_{jk}}\sum_{|\alpha |_{1}=m_{jk}}a_{jk\alpha }\partial ^{\alpha
},
\end{equation*}
is homogeneous of order $m_{jk}:=m_{k}+\kappa _{k}-\kappa _{j},$ with the
understanding that $A_{jk}=0$ if $m_{jk}<0.$ With these assumptions, the $n$
-tuples $\mathbf{m}+\mathbf{\kappa }$ and $-\mathbf{\kappa }$ are a system
of DN numbers for the operator $\mathbf{A}$ (Douglis and Nirenberg \cite
{DoNi55}, Wloka \textit{et al.} \cite{WlRoLa95}).

Let $\mathbf{A}(\xi )$ denote the $n\times n$ matrix with entries 
\begin{equation*}
A_{jk}(\xi ):=\sum_{|\alpha |_{1}=m_{jk}}a_{jk\alpha }\xi ^{\alpha }.
\end{equation*}
Since $A_{jk}(\xi )$ is homogeneous of degree $m_{jk},$ it follows that $%
\det (\mathbf{A}(\xi ))$ is homogeneous of degree $M:=\sum_{j=1}^{n}m_{j}.$

We shall assume that $\mathbf{A}$ is DN elliptic. This means that 
\begin{equation*}
\det (\mathbf{A}(\xi ))\neq 0\text{ for every }\xi \in \Bbb{R}^{N}\backslash
\{0\}.
\end{equation*}
The above assumptions are unaffected by changing $\mathbf{\kappa }$ into $%
\mathbf{\kappa }+\iota \mathbf{1}$ where $\iota \in \Bbb{Z}$ and $\mathbf{1}%
:=(1,...,1)\in \Bbb{N}^{n}.$

Homogeneous Petrovsky-elliptic systems ($m_{1}=\cdots =m_{n}$ and $\kappa
_{1}=\cdots =\kappa _{n}$), such as the linear elasticity system and
diagonal systems of homogeneous elliptic operators (arbitrary $m_{j}$ and $%
\kappa _{j}$) are the simplest examples satisfying the above conditions. The
Stokes system, with $n=N+1$ and $m_{1}=\cdots =m_{N}=2,\kappa _{1}=\cdots
\kappa _{N}=\kappa \in \Bbb{Z}$ and $m_{N+1}=0,$ $\kappa _{N+1}=\kappa +1$
is a less obvious example.

Since the space of formal scalar differential operators with constant
coefficients is a commutative ring, $\det \mathbf{A}$ is defined as a scalar
differential operator with constant coefficients. This remark was first used
long ago by Malgrange \cite{Ma56} to prove the existence of a fundamental
solution for systems with constant coefficients. We shall use it in a
technically different way, but in a similar spirit, to generalize Theorem 
\ref{th12}. In practice, $\det \mathbf{A}$ is obtained by replacing $\xi
^{\alpha }$ with $i^{|\alpha |_{1}}\partial ^{\alpha }$ in $\det (\mathbf{A}
(\xi )),$ so that it is homogeneous of order $M$ and elliptic.

For simplicity of notation, we set 
\begin{equation*}
\left. 
\begin{array}{c}
D^{\mathbf{m+\kappa },p}:=\prod_{j=1}^{n}D^{m_{j}+\kappa _{j},p},\qquad 
\mathcal{P}_{\mathbf{m+\kappa -1}}:=\prod_{j=1}^{n}\mathcal{P}_{m_{j}+\kappa
_{j}-1} \\ 
D^{\mathbf{\kappa },p}:=\prod_{j=1}^{n}D^{\kappa _{j},p},\qquad \mathcal{P}_{%
\mathbf{\kappa -1}}:=\prod_{j=1}^{n}\mathcal{P}_{\kappa _{j}-1}.
\end{array}
\right.
\end{equation*}
Thus, if $\mathbf{u}=(u_{j})_{1\leq j\leq n}\in D^{\mathbf{\ m+\kappa },p}$ (%
$\mathbf{f}=(f_{j})_{1\leq j\leq n}\in D^{\mathbf{\ \kappa },p}$), the
equivalence class of $\mathbf{u}$ in $D^{\mathbf{m+\kappa },p}/\mathcal{P}_{%
\mathbf{m+k-1}}$ (of $\mathbf{f}$ in $D^{\mathbf{\kappa },p}/\mathcal{P}_{%
\mathbf{k-1}}$) is $[\mathbf{u}]_{\mathbf{m+\kappa -1}}=([u]_{m_{j}+\kappa
_{j}-1})_{1\leq j\leq n}$ ($[\mathbf{f}]_{\mathbf{\kappa -1}
}=([f_{j}]_{\kappa _{j}-1})_{1\leq j\leq n}$).

\begin{theorem}
\label{th21}If $1<p<\infty ,$ the operator $\mathbf{A}$ is a linear
isomorphism from $D^{\mathbf{m+\kappa },p}/\mathcal{P}_{\mathbf{m+k-1}}$
onto $D^{\mathbf{\kappa },p}/\mathcal{P}_{\mathbf{k-1}}$ and a homomorphism
of $D^{\mathbf{m+\kappa },p}$ onto $D^{\mathbf{\kappa },p}.$ \newline
\end{theorem}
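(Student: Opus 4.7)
The plan is to reduce everything to the scalar case via the classical Cramer identity $\mathbf{A}\mathbf{C}=\mathbf{C}\mathbf{A}=\det(\mathbf{A})I$, where $\mathbf{C}=(C_{ij})$ denotes the adjugate of $\mathbf{A}$ and $\det(\mathbf{A})$ is the formal determinant, regarded as a scalar constant-coefficient operator. Since the entries of $\mathbf{A}$ commute (constant coefficients), this identity is meaningful at the operator level. A routine determinant expansion shows that $C_{ij}$ is homogeneous of order $M-m_i+\kappa_j-\kappa_i$ (with the convention that negative order means the zero operator, in keeping with the convention $A_{jk}=0$ when $m_{jk}<0$) and that $\det(\mathbf{A})$ is scalar homogeneous of order $M$. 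Crucially, the DN-ellipticity hypothesis $\det(\mathbf{A}(\xi))\neq 0$ for $\xi\neq 0$ is precisely that $\det(\mathbf{A})$ is elliptic in the sense of (\ref{2}), so \thmref{th12} applies to it for every integer $\kappa$.

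First I would verify continuity. The order $m_{jk}=m_k+\kappa_k-\kappa_j$ of $A_{jk}$ shows at once that $\mathbf{A}:D^{\mathbf{m+\kappa},p}\to D^{\mathbf{\kappa},p}$ is continuous, and the same degree count shows $\mathbf{A}$ sends $\mathcal{P}_{\mathbf{m+\kappa-1}}$ into $\mathcal{P}_{\mathbf{\kappa-1}}$, so $\mathbf{A}$ descends to a continuous linear map between the corresponding quotient Banach spaces.

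For the surjectivity of $\mathbf{A}:D^{\mathbf{m+\kappa},p}\to D^{\mathbf{\kappa},p}$, given $\mathbf{f}=(f_j)$, I would invoke \thmref{th12} for the scalar elliptic operator $\det(\mathbf{A})$ of order $M$, with its $\kappa$-parameter equal to $\kappa_j$, to produce $w_j\in D^{M+\kappa_j,p}$ solving $\det(\mathbf{A})w_j=f_j$, and then set $u_i:=\sum_j C_{ij}w_j$. The order count places $u_i$ in $D^{m_i+\kappa_i,p}$, and $\mathbf{A}\mathbf{C}=\det(\mathbf{A})I$ immediately gives $(\mathbf{A}\mathbf{u})_i=\det(\mathbf{A})w_i=f_i$. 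For injectivity on the quotient, I would suppose $(\mathbf{A}\mathbf{u})_k\in\mathcal{P}_{\kappa_k-1}$ for every $k$; applying $\mathbf{C}$ and using $\mathbf{C}\mathbf{A}=\det(\mathbf{A})I$ gives $\det(\mathbf{A})u_j=\sum_k C_{jk}(\mathbf{A}\mathbf{u})_k$, which, by the order of $C_{jk}$, lies in $\mathcal{P}_{m_j+\kappa_j-M-1}$. The isomorphism part of \thmref{th12} applied to $\det(\mathbf{A})$, with source quotient $D^{m_j+\kappa_j,p}/\mathcal{P}_{m_j+\kappa_j-1}$ and target $D^{m_j+\kappa_j-M,p}/\mathcal{P}_{m_j+\kappa_j-M-1}$, then forces $u_j\in\mathcal{P}_{m_j+\kappa_j-1}$, i.e.\ $[\mathbf{u}]_{\mathbf{m+\kappa-1}}=0$.

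The homomorphism statement on the unquotiented spaces follows automatically from the open mapping theorem once $\mathbf{A}$ is known to be a continuous surjection between Banach spaces. The main obstacle, in practice, is the pedestrian but delicate bookkeeping of the cofactor orders over all admissible values of $\mathbf{m}$ and $\mathbf{\kappa}$, and the verification that the Cramer identity -- an algebraic statement in the commutative ring of scalar constant-coefficient differential operators -- acts correctly on elements of the homogeneous Sobolev scale (including in the negative-order range where distributions need not be functions); once those details are in hand, everything is a mechanical reduction to \thmref{th12}.
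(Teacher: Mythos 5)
Your proof is correct, and the backbone is the same as the paper's: reduce to the scalar operator $\det\mathbf{A}$ via the Malgrange cofactor trick, invoke \thmref{th12} for $\det\mathbf{A}$, and lift back with the adjugate. The surjectivity argument (solve $\det(\mathbf{A})w_j=f_j$ by \thmref{th12} and set $u_i=\sum_j C_{ij}w_j$) is, up to the cofactor-versus-adjugate indexing convention, identical to the paper's. Your cofactor order count $M-m_i+\kappa_j-\kappa_i$ for the adjugate entry $C_{ij}$ agrees with the paper's $M-m_k-\kappa_k+\kappa_j$ for the $(j,k)$ cofactor after transposition, and the ``negative order gives the zero operator'' convention is in fact automatic: every term of the cofactor expansion has the same formal degree, so if that degree is negative they must all vanish.

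Where you genuinely depart is injectivity. The paper argues in Fourier space: $\mathbf{Au}\in\mathcal{P}_{\mathbf{\kappa-1}}$ forces $\limfunc{Supp}\widehat{\mathbf{u}}\subset\{0\}$, hence each $u_j$ is a polynomial, and then \lemref{lm11} pins down the degree. You instead apply $\mathbf{C}$ on the left, obtain $\det(\mathbf{A})u_j=\sum_kC_{jk}(\mathbf{Au})_k\in\mathcal{P}_{m_j+\kappa_j-M-1}$, and conclude $u_j\in\mathcal{P}_{m_j+\kappa_j-1}$ from the injectivity half of \thmref{th12} applied to $\det\mathbf{A}$ with $\kappa^{\prime}=m_j+\kappa_j-M$. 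This is a clean, self-contained variant that reuses exactly the same tool as the surjectivity step, and it buys you the small economy of not needing \lemref{lm11} separately (though \thmref{th12} itself uses \lemref{lm11} internally, so the dependence is only hidden, not removed). One small caveat: your appeal to the open mapping theorem for the unquotiented ``homomorphism'' statement is not quite right as stated, since $D^{m_j+\kappa_j,p}$ with $m_j+\kappa_j>0$ is only seminormed, not Banach; but the conclusion the theorem actually wants there is simply surjectivity of $\mathbf{A}:D^{\mathbf{m+\kappa},p}\to D^{\mathbf{\kappa},p}$, and your construction of $\mathbf{u}$ already lives in $D^{\mathbf{m+\kappa},p}$ itself, so this is already proved without OMT.
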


\begin{proof}
A routine verification shows that $\mathbf{A}$ maps $D^{\mathbf{m+\kappa }
,p} $ into $D^{\mathbf{\kappa },p}$ and $\mathcal{P}_{\mathbf{m+\kappa -1}}$
into $\mathcal{P}_{\mathbf{\kappa -1}},$ so that $\mathbf{A}$ is well
defined from $D^{\mathbf{m+\kappa },p}/\mathcal{P}_{\mathbf{m+k-1}}$ to $D^{%
\mathbf{\kappa },p}/\mathcal{P}_{\mathbf{k-1}}.$ Furthermore, in that
setting, $\mathbf{A}$ is one-to-one, for if $\mathbf{u}\in D^{\mathbf{%
m+\kappa },p}$ and $\mathbf{Au}\in \mathcal{P}_{\mathbf{k-1}},$ the usual
Fourier transform argument shows that $\limfunc{Supp}\widehat{\mathbf{u}}
=\{0\}.$ Hence, the components $u_{j}\in D^{m_{j}+\kappa _{j},p}$ of $%
\mathbf{u}$ are polynomials and so $u_{j}\in \mathcal{P}_{m_{j}+\kappa
_{j}-1}$ by Lemma \ref{lm11}, which in turn yields $[\mathbf{u}]_{\mathbf{\
m+\kappa -1}}=[\mathbf{0}]_{\mathbf{\ m+\kappa -1}}.$

We now prove that $\mathbf{A}$ is onto $D^{\mathbf{\kappa },p}/\mathcal{P}_{%
\mathbf{k-1}}$ by exhibiting a right inverse. For every $1\leq j,k\leq n,$
denote by $C_{jk}$ the $(j,k)$ cofactor of $\mathbf{A}.$ This is the scalar
differential operator obtained by replacing $\xi ^{\alpha }$ with $%
i^{|\alpha |_{1}}\partial ^{\alpha }$ in the corresponding cofactor $%
C_{jk}(\xi )$ of $\mathbf{A}(\xi ).$ As a result, $C_{jk}$ is homogeneous of
order $M-m_{k}-\kappa _{k}+\kappa _{j}.$ In particular, $C_{k\ell }$
(homogeneous of order $M-m_{\ell }-\kappa _{\ell }+\kappa _{k}$) maps $%
D^{M+\kappa _{k},p}$ into $D^{m_{\ell }+\kappa _{\ell },p}$ and $\mathcal{P}
_{M+\kappa _{k}-1}$ into $\mathcal{P}_{m_{\ell }+\kappa _{\ell }-1}$ and so
it is a well defined operator from $D^{M+\kappa _{k},p}/\mathcal{P}
_{M+\kappa _{k}-1}$ to $D^{m_{\ell }+\kappa _{\ell },p}/\mathcal{P}_{m_{\ell
}+\kappa _{\ell }-1}.$ On the other hand, by the very definition of $C_{jk},$
\begin{equation}
\sum_{\ell =1}^{n}A_{j\ell }C_{k\ell }=\delta _{jk}\det \mathbf{A}\text{
(Kronecker delta).}  \label{23}
\end{equation}

It follows from Theorem \ref{th12} that $\det \mathbf{A}$ is an isomorphism
of $D^{M+\kappa _{k},p}/\mathcal{P}_{M+\kappa _{k}-1}$ onto $D^{\kappa
_{k},p}/\mathcal{P}_{\kappa _{k}-1}$ for $1\leq k\leq n.$ Denote by $B_{k}$
the inverse isomorphism and let $[f]_{\kappa _{k}-1}\in D^{\kappa _{k},p}/%
\mathcal{P}_{\kappa _{k}-1},$ so that $B_{k}[f]_{\kappa _{k}-1}\in
D^{M+\kappa _{k},p}/\mathcal{P}_{M+\kappa _{k}-1}.$ From the above, $%
C_{k\ell }B_{k}[f]_{\kappa _{k}-1}\in D^{m_{\ell }+\kappa _{\ell },p}/%
\mathcal{P}_{m_{\ell }+\kappa _{\ell }-1}$ and so $A_{j\ell }C_{k\ell
}B_{k}[f]_{\kappa _{k}-1}\in D^{\kappa _{j},p}/\mathcal{P}_{\kappa _{j}-1}$
since $A_{j\ell }$ (homogeneous of order $m_{\ell }+\kappa _{\ell }-\kappa
_{j}$) maps $D^{m_{\ell }+\kappa _{\ell },p}$ into $D^{\kappa _{j},p}$ and $%
\mathcal{P}_{m_{\ell }+\kappa _{\ell }-1}$ into $\mathcal{P}_{\kappa
_{j}-1}. $ Consequently, by (\ref{23}), $\sum_{\ell =1}^{n}A_{j\ell
}C_{k\ell }B_{k}[f]_{\kappa _{k}-1}=[0]_{\kappa _{j}-1}$ if $j\neq k$ and $%
\sum_{\ell =1}^{n}A_{k\ell }C_{k\ell }B_{k}[f]_{\kappa _{k}-1}=(\det \mathbf{%
A})B_{k}[f]_{\kappa _{k}-1}=[f]_{\kappa _{k}-1}.$ Therefore, the operator $%
\mathbf{B}:=(B_{jk})_{1\leq j,k\leq n}$ with $B_{jk}:=C_{kj}B_{k}$ acting
from $D^{\mathbf{\kappa },p}/\mathcal{P}_{\mathbf{k-1}}$ to $D^{\mathbf{\
m+\kappa },p}/\mathcal{P}_{\mathbf{m+k-1}}$ is a right inverse of $\mathbf{A}%
.$

To show that $\mathbf{A}$ maps $D^{\mathbf{m+\kappa },p}$ onto $D^{\mathbf{\
\kappa },p},$ recall that, by Theorem \ref{th12}, $\det \mathbf{A}$ maps $%
D^{M+\kappa _{k},p}$ onto $D^{\kappa _{k},p}$ for $1\leq k\leq n.$ Given $%
\mathbf{f}=(f_{k})_{1\leq k\leq n}\in D^{\mathbf{\kappa },p},$ choose $%
v_{k}\in D^{M+\kappa _{k},p}$ such that $(\det \mathbf{A})v_{k}=f_{k}$ and,
for $1\leq \ell \leq n,$ set $u_{\ell }:=\sum_{k=1}^{n}C_{k\ell }v_{k}.$
Then, $u_{\ell }\in D^{m_{\ell }+\kappa _{\ell },p}$ and, with $\mathbf{u}
:=(u_{\ell })_{1\leq \ell \leq n}\in D^{\mathbf{m+\kappa },p},$ we have $(%
\mathbf{Au})_{j}=\sum_{\ell =1}^{n}A_{j\ell }u_{\ell
}=\sum_{k=1}^{n}\sum_{\ell =1}^{n}A_{j\ell }C_{k\ell
}v_{k}=\sum_{k=1}^{n}\delta _{jk}(\det \mathbf{A})v_{k}=f_{j}.$ Thus, $%
\mathbf{Au}=\mathbf{f}$ and the proof is complete.
\end{proof}

When $\mathbf{A}$ is the Stokes system, partial results related to Theorem 
\ref{th21} have been proved, with the help of fundamental solutions, under
more restrictive assumptions about $\mathbf{f}$ (\cite{FaSo94}, \cite{Ga11}%
). In that regard, we point out that there are technical difficulties in
proving Theorem \ref{th21} in full generality based on the construction of a
suitable fundamental solution, as was done in Lemma \ref{lm10} in the scalar
case. Note that if $\mathbf{A}$ is the Stokes system, then $\det \mathbf{A}%
=(-1)^{N}\Delta ^{N}$ has order $2N.$

Upon using Theorem \ref{th21} instead of Theorem \ref{th12} in the proof, it
is now obvious how Theorem \ref{th13} can be generalized. It suffices to
introduce a convenient notation. If $\mathbf{a}=(a_{j})_{1\leq j\leq n}$ and 
$\mathbf{b}=(b_{j})_{1\leq j\leq n},$ the inequality $\mathbf{a}\geq \mathbf{%
\ b}$ ($\mathbf{a}>\mathbf{b}$) means $a_{j}\geq b_{j}$ ($a_{j}>b_{j}$) for $%
1\leq j\leq n.$ Also, if $\mathbf{s}=(s_{j})_{1\leq j\leq n},$ we set $%
M_{0}^{\mathbf{s},p}:=\prod_{j=1}^{n}M_{0}^{s_{j},p}.$ With this, we can
state:

\begin{theorem}
\label{th22}If $\mathbf{u}\in (\mathcal{D}^{\prime })^{n}$ and $\mathbf{Au}%
\in D^{\mathbf{\kappa },p}$ with $\mathbf{\kappa }\geq -\mathbf{m}$ and $1<p<%
\infty ,$ the following properties are equivalent:\newline
(i) $\mathbf{u}\in D^{\mathbf{m+k},p}.$\newline
(ii) $\mathbf{u}\in M_{0}^{\mathbf{s},p}$ for every $\mathbf{s}>\mathbf{m}+%
\mathbf{\kappa }-(N/p)\mathbf{1}$ if $p>N$ and every $\mathbf{s}>\mathbf{m}+%
\mathbf{k}-\mathbf{1}$ if $p\leq N.$\newline
(iii) $\mathbf{u}\in M_{0}^{\mathbf{m}+\mathbf{\kappa },1}.$\newline
\end{theorem}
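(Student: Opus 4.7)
The plan is to mimic the three-step argument used for Theorem~\ref{th13}, with Theorem~\ref{th21} taking the place of Theorem~\ref{th12} and a cofactor/determinant reduction taking the place of the direct appeal to Corollary~\ref{cor5}.

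First, for (i) $\Rightarrow$ (ii), I would simply apply Theorem~\ref{th8} to each component $u_{j}\in D^{m_{j}+\kappa _{j},p}$ separately. Since $m_{j}+\kappa _{j}\geq 0$, the hypothesis on $\mathbf{s}$ becomes $s_{j}>m_{j}+\kappa _{j}-N/p$ (if $p>N$) or $s_{j}>m_{j}+\kappa _{j}-1$ (if $p\leq N$), which is exactly what Theorem~\ref{th8} provides. For (ii) $\Rightarrow$ (iii), I would choose $s_{j}=m_{j}+\kappa _{j}$ in (ii) and use $M_{0}^{m_{j}+\kappa _{j},p}\subset M_{0}^{m_{j}+\kappa _{j},1}$ (Theorem~\ref{th1}(ii)) componentwise to land in $M_{0}^{\mathbf{m}+\boldsymbol{\kappa },1}$.

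The substantive step is (iii) $\Rightarrow$ (i). Given $\mathbf{Au}\in D^{\boldsymbol{\kappa },p}$, Theorem~\ref{th21} produces $\mathbf{v}\in D^{\mathbf{m}+\boldsymbol{\kappa },p}$ with $\mathbf{Av}=\mathbf{Au}$. By the already established (i) $\Rightarrow$ (iii) direction, $\mathbf{v}\in M_{0}^{\mathbf{m}+\boldsymbol{\kappa },1}$, so $\mathbf{w}:=\mathbf{u}-\mathbf{v}\in M_{0}^{\mathbf{m}+\boldsymbol{\kappa },1}$ and $\mathbf{Aw}=\mathbf{0}$. The task then reduces to showing that this kernel lies in $\mathcal{P}_{\mathbf{m}+\boldsymbol{\kappa }-\mathbf{1}}$, since polynomials of degree $\leq m_{j}+\kappa _{j}-1$ are trivially in $D^{m_{j}+\kappa _{j},p}$ (as $m_{j}+\kappa _{j}\geq 0$), whence $\mathbf{u}=\mathbf{v}+\mathbf{w}\in D^{\mathbf{m}+\boldsymbol{\kappa },p}$.

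For that kernel step, I would use the cofactor identity from the proof of Theorem~\ref{th21}. The transpose of (\ref{23}) reads $\sum_{j}C_{ji}A_{jk}=\delta _{ik}\det \mathbf{A}$. Applying $C_{ji}$ to the $j$-th equation $\sum_{k}A_{jk}w_{k}=0$ and summing over $j$ gives
\begin{equation*}
(\det \mathbf{A})w_{i}=\sum_{j,k}C_{ji}A_{jk}w_{k}=0,\qquad 1\leq i\leq n.
\end{equation*}
Since $\det \mathbf{A}$ is a homogeneous scalar operator of order $M$ with symbol $\det (\mathbf{A}(\xi ))\neq 0$ on $\mathbb{R}^{N}\backslash \{0\}$ (DN ellipticity), it is elliptic in the sense of (\ref{2}). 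Corollary~\ref{cor5} applied to $\det \mathbf{A}$ and $w_{i}\in M_{0}^{m_{i}+\kappa _{i},1}$ then forces $w_{i}\in \mathcal{P}$ with $\deg w_{i}<m_{i}+\kappa _{i}$, i.e., $w_{i}\in \mathcal{P}_{m_{i}+\kappa _{i}-1}$, completing the argument.

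The main obstacle, as in the scalar case, is the implication (iii) $\Rightarrow$ (i); the only genuinely new ingredient beyond the scalar proof is the observation that the cofactor identity converts the system-level Liouville question into a scalar one for the elliptic operator $\det \mathbf{A}$, at which point Corollary~\ref{cor5} applies componentwise. Everything else (existence of $\mathbf{v}$, componentwise embeddings, polynomial degree bookkeeping) is a routine packaging of scalar results already in hand.
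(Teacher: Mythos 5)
Your proof is correct and follows essentially the same route the paper intends: the paper's proof of Theorem~\ref{th22} is literally just the proof of Theorem~\ref{th13} with Theorem~\ref{th21} substituted for Theorem~\ref{th12}, and your (i)~$\Rightarrow$~(ii)~$\Rightarrow$~(iii) componentwise reductions and the decomposition $\mathbf{u}=\mathbf{v}+\mathbf{w}$ via Theorem~\ref{th21} are exactly that. The one place the paper leaves implicit is the Liouville step $\mathbf{Aw}=\mathbf{0},\ \mathbf{w}\in M_{0}^{\mathbf{m}+\boldsymbol{\kappa},1}\Rightarrow\mathbf{w}\in\mathcal{P}_{\mathbf{m}+\boldsymbol{\kappa}-\mathbf{1}}$; here you use the left-cofactor identity $\sum_{j}C_{ji}A_{jk}=\delta_{ik}\det\mathbf{A}$ (transposing (\ref{23})) to get $(\det\mathbf{A})w_{i}=0$ and then invoke the scalar Corollary~\ref{cor5}, whereas the paper (see the injectivity step in the proof of Theorem~\ref{th21}) reaches the same conclusion more directly by Fourier transform, using that $\mathbf{A}(\xi)$ is invertible for $\xi\neq 0$ so that $\limfunc{Supp}\widehat{\mathbf{w}}\subset\{0\}$, followed by Theorem~\ref{th4}~(ii). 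Both routes are sound and rest on the same DN-ellipticity of $\det\mathbf{A}$; your cofactor reduction has the small advantage of reusing Corollary~\ref{cor5} verbatim rather than repeating its Fourier-theoretic proof in vector form.
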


\end{document}